\newcommand{\n}{\noindent}
\newcommand{\ovl}{\overline}
\newcommand{\intl}{\int\limits}
\newcommand{\bb}[1]{\mathbb{#1}}
\newcommand{\vp}{\varepsilon}
\newcommand{\sst}{\scriptstyle}
\newcommand{\ms}{\medskip}
\theoremstyle{plain}
\newtheorem{thm}{Theorem}[section]
\newtheorem{lem}{Lemma}[section]
\theoremstyle{remark}
\newtheorem{rem}{Remark}[section]
\theoremstyle{definition}
\numberwithin{equation}{section}
\title{Isolated Singularities of Nonlinear Polyharmonic Inequalities}
\author{Steven D.~Taliaferro
\footnote{Mathematics Department,
Texas A\&M University,
College Station, TX 77843-3368,
{\tt stalia@math.tamu.edu},
Phone 979-845-2404,
Fax 979-845-6028}}
\date{} 
\begin{document}
 
\maketitle

\thispagestyle{empty}


\begin{abstract}
  We obtain results for the following question where $m\ge 1$ and
  $n\ge 2$ are integers.  \ms

  \n {\bf Question.} For which continuous functions $f\colon
  [0,\infty)\to [0,\infty)$ does there exist a continuous function
  $\varphi\colon (0,1)\to (0,\infty)$ such that every $C^{2m}$
  nonnegative solution $u(x)$ of
\[
 0 \le -\Delta^m u\le f(u)\quad \text{in}\quad B_2(0)\backslash\{0\}\subset {\bb R}^n
\]
satisfies
\[
 u(x) = O(\varphi(|x|))\quad \text{as}\quad x\to 0
\]
and what is the optimal such $\varphi$ when one exists?

\medskip

\n {\it Keywords}: Isolated singularity; Polyharmonic

\end{abstract}
\section{Introduction and results}\label{sec1.1}

\indent

In this paper we consider the following question where $m\ge 1$ and $n\ge 2$ are integers.
\ms

\n {\bf Question 1.} For which continuous functions $f\colon [0,\infty)\to [0,\infty)$ does there exist a continuous function $\varphi\colon (0,1)\to (0,\infty)$ such that every $C^{2m}$ nonnegative solution $u(x)$ of
\begin{equation}\label{eq1.1}
 0 \le -\Delta^m u\le f(u)\quad \text{in}\quad B_2(0)\backslash\{0\}\subset {\bb R}^n
\end{equation}
satisfies
\begin{equation}\label{eq1.2}
 u(x) = O(\varphi(|x|))\quad \text{as}\quad x\to 0
\end{equation}
and what is the optimal such $\varphi$ when one exists?
\ms

\n We call a function $\varphi$ with the above properties a pointwise a priori bound (as $x\to 0$) for $C^{2m}$ nonnegative solutions $u(x)$ of \eqref{eq1.1}.

As we shall see, when $\varphi$ in Question~1 is optimal, the estimate \eqref{eq1.2} can sometimes be sharpened to
\[
 u(x) = o(\varphi(|x|))\quad \text{as}\quad x\to 0.
\]

\begin{rem}\label{rem1.1}
Let
\begin{equation}\label{eq1.3}
 \Gamma(r) = \begin{cases}
              r^{-(n-2)},&\text{if $n\ge 3$;}\\
\log \frac5r,&\text{if $n=2$.}
             \end{cases}
\end{equation}
Since $u(x)=\Gamma(|x|)$ is a positive solution of $-\Delta^mu = 0$ in $B_2(0) \backslash\{0\}$, and hence a positive solution of \eqref{eq1.1}, any pointwise a priori bound $\varphi$ for $C^{2m}$ nonnegative solutions $u(x)$ of \eqref{eq1.1} must be at least as large as $\Gamma$, and whenever $\varphi=\Gamma$ is such a bound it is necessarily an optimal bound.
\end{rem}

If $m\ge 1$ and $n\ge 2$ are integers then $m$ and $n$ satisfy one of the following five conditions.
\begin{itemize}
 \item[(i)] either $m$ is even or $2m>n$;
\item[(ii)] $m=1$ and $n\ge 3$;
\item[(iii)] $m=1$ and $n=2$;
\item[(iv)] $m\ge 3$ is odd and $2m<n$;
\item[(v)] $m\ge 3$ is odd and $2m=n$.
\end{itemize}

The following three theorems, which we proved in \cite{GMT},
\cite{T2}, and \cite{T1}, completely answer Question~1 when $m$ and
$n$ satisfy either (i), (ii), or (iii). Consequently, in this paper,
we will only prove results dealing with the
case that $m$ and $n$ satisfy either (iv) or (v).

\begin{thm}\label{thm1.1}
  Suppose $m\ge 1$ and $n\ge 2$ are integers satisfying (i) and
  $f\colon [0,\infty)\to [0,\infty)$ is a continuous function. Let
  $u(x)$ be a $C^{2m}$ nonnegative solution of \eqref{eq1.1} or, more
  generally, of
\begin{equation}\label{eq1.3-2}
 -\Delta^m u\ge 0\quad \text{in}\quad B_2(0)\backslash\{0\}\subset {\bb R}^n.
\end{equation}
Then
\begin{equation}\label{eq1.4}
 u(x) = O(\Gamma(|x|))\quad \text{as}\quad x\to 0,
\end{equation}
where $\Gamma$ is given by \eqref{eq1.3}.
\end{thm}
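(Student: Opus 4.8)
\medskip
\noindent{\it Sketch of proof.}

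The inequality $-\Delta^m u\le f(u)$ in \eqref{eq1.1} is irrelevant here: it suffices to treat a nonnegative $C^{2m}$ solution of \eqref{eq1.3-2}, so $f$ does not enter. The plan is to argue in two stages --- first a radial estimate for the spherical average of $u$, then an upgrade to the pointwise bound \eqref{eq1.4}.

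For the first stage, let $\bar u(r)$ be the average of $u$ over $\partial B_r(0)$ and let $L=\frac{d^2}{dr^2}+\frac{n-1}{r}\,\frac{d}{dr}$ be the radial part of $\Delta$. Since the spherical average of $\Delta^m u$ equals $L^m\bar u$, we have $\bar u\ge 0$ and $g:=-L^m\bar u\ge 0$ on $(0,2)$, and the claim is that these two facts force $\bar u(r)=O(\Gamma(r))$ as $r\to 0^+$, together with finiteness of $\int_0^1 r^{2m-1}g(r)\,dr$ (equivalently, $|y|^{2m-n}$ --- or simply $1$ when $2m>n$ --- is integrable against $-\Delta^m u$ near the origin). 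The mechanism is as follows. The homogeneous equation $L^m y=0$ is spanned by $r^{2k}$ and $r^{2k+2-n}$ for $0\le k\le m-1$, with logarithmic factors in the resonant cases ($n$ even, $2m\ge n$); the most singular of these as $r\to0^+$ is $r^{2-n}$ when $n\ge3$ and $\log(1/r)$ when $n=2$ --- that is, exactly $\Gamma$ --- so the homogeneous part of $\bar u$ is automatically $O(\Gamma)$. For the particular part one integrates $L^m\bar u=-g$ by repeatedly applying the explicit inverse of $L$ (a double radial integral): one such step applied to a nonnegative function whose weighted integral near $0$ is finite already yields an $O(\Gamma)$ term, and each later step yields only milder terms. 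So everything comes down to squeezing the weighted integrability of $g$ out of $\bar u\ge 0$ --- using, for instance, that applying $L^m$ to a $\Gamma$-super-singular one-signed profile $r^{-\gamma}\psi(r)$ with $\gamma>n-2$ reverses its sign, contradicting $g\ge0$.

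For the second stage, with the above weighted integrability in hand, set $v:=-E_m*\big((-\Delta^m u)\,\chi_{B_1(0)}\big)$, where $E_m$ is the fundamental solution of $\Delta^m$; then $v=O(\Gamma(|x|))$ (indeed $v$ is bounded near $0$ when $2m>n$), and $h:=u-v$ is polyharmonic in $B_1(0)\backslash\{0\}$. This is where hypothesis~(i) does its work: when $m$ is even, $E_m\ge0$ near the origin, so $v\le0$ and $h=u-v\ge u\ge0$ is a \emph{nonnegative} polyharmonic function; when $2m>n$ (the other alternative in (i), forcing $m$ odd into consideration), $v$ is bounded, so $h$ is polyharmonic with bounded negative part near $0$. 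In either case a B\^ocher-type structure theorem --- a polyharmonic function in a punctured ball whose negative part is $O(\Gamma)$ near the singularity is itself $O(\Gamma)$ --- yields $h(x)=O(\Gamma(|x|))$, hence $u=h+v=O(\Gamma(|x|))$, which is \eqref{eq1.4}. For $m$ even one may instead run a more hands-on version on small balls $B_{|x|/2}(x)$ missing the origin, combining Boggio's positivity of the polyharmonic Green's function of a ball with the interior $L^1$ estimate for polyharmonic functions.

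The main obstacle is the first stage: converting the two sign conditions on $\bar u$ into the estimate $\bar u=O(\Gamma)$ and the weighted integrability of $-\Delta^m u$, with care for oscillatory possibilities for $\bar u$ and for the resonant logarithmic cases. Hypothesis~(i) is then exactly what renders the potential $v$ harmless in the second stage --- via positivity of $E_m$ for $m$ even, or boundedness of $E_m$ for $2m>n$; in cases (iv) and (v) this whole scheme necessarily breaks down, since counterexamples exist there, and coming to grips with that is the business of the rest of the paper.
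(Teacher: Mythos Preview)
The paper does not itself prove Theorem~\ref{thm1.1}; the result is quoted from \cite{GMT}, so there is no in-paper argument to compare against line by line. Your two-stage plan is in the right spirit, but there is a concrete gap.

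The integrability you claim in Stage~1 is too strong and is in fact false under hypothesis~(i). You assert $\int_0^1 r^{2m-1}g(r)\,dr<\infty$, i.e.\ that $|y|^{2m-n}$ (or $1$, when $2m>n$) is integrable against $-\Delta^m u$ near the origin. But for $m=2$, $n=5$ (so $m$ even, $2m<n$) the function $u(x)=|x|^{-2}$ satisfies $u>0$ and $-\Delta^2 u=8|x|^{-6}>0$ in $B_2(0)\backslash\{0\}$ while $|x|^{-6}\notin L^1(B_1)$; and for $m=2$, $n=3$ (so $2m>n$) one may take $u(x)=M|x|^{-1}-c|x|^{1/2}$ with $M$ large, giving $u>0$ and $-\Delta^2 u=\tfrac{9}{16}c\,|x|^{-7/2}\notin L^1(B_1)$. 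What actually follows from $u\ge0$ and $-\Delta^m u\ge0$ --- and what Lemma~\ref{lem2.1} (imported from \cite{GMT}) records --- is only the weaker $\int_{|y|<1}|y|^{2m-2}(-\Delta^m u)\,dy<\infty$. This breaks Stage~2 as written: the raw convolution $v=-E_m*\big((-\Delta^m u)\chi_{B_1}\big)$ has a one-signed integrand that, in the examples above, diverges for every $x$, so $v\equiv\pm\infty$ and one cannot form $h=u-v$ or invoke any B\^ocher-type statement. The repair is precisely the device of Lemma~\ref{lem2.1}: replace $\Phi(x-y)$ by the Taylor-corrected kernel $\Psi(x,y)$ of \eqref{eq2.4}, whose remainder bound is tailored to the $|y|^{2m-2}$ weight, and work with the decomposition $u=N+h+\sum_{|\alpha|\le 2m-2}a_\alpha D^\alpha\Phi$. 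Hypothesis~(i) then enters through one-sided control on $N$ and the $D^\alpha\Phi$ terms; your positivity/boundedness dichotomy for $E_m$ is the right intuition, but it has to be transplanted to $\Psi$ to make the argument go through.
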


\begin{thm}\label{thm1.2}
Let $u(x)$ be a $C^2$ nonnegative solution of \eqref{eq1.1} where
the integers $m$ and $n$ satisfy (ii), (resp. (iii)), 
and $f\colon [0,\infty)\to
[0,\infty)$ is a continuous function satisfying
\begin{equation}\label{eq1.5}
 f(t) = O(t^{n/(n-2)}), \quad (\text{resp. } \log(1+f(t)) = O(t)) \quad \text{as}\quad t\to \infty.
\end{equation}
Then $u$ satisfies \eqref{eq1.4}.
\end{thm}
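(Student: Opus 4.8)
\medskip
\noindent\textbf{Proof proposal.}
Since $m=1$, putting $g:=-\Delta u$ the hypotheses read $0\le g\le f(u)$ in $B_2(0)\setminus\{0\}$, and the growth assumption supplies a constant $C$ with $g\le C(1+u^{n/(n-2)})$ there when $n\ge3$, resp.\ $g\le C\,e^{Cu}$ when $n=2$. I treat $n\ge3$; the case $n=2$ is handled by the same scheme after replacing the fundamental-solution exponent by a logarithm, the power nonlinearities by exponentials, and the Sobolev inequality by the Trudinger--Moser inequality, with $\log(1+f(t))=O(t)$ playing the role of criticality. The idea is to reduce $u(x)=O(\Gamma(|x|))$ to a Harnack-type bound on small spheres centered at $0$, obtained by playing off the lower bound $-\Delta u\ge0$ against the upper bound on $g$.

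First I would extract, from $-\Delta u\ge0$ alone, two preliminary facts. Since $u\ge0$ is superharmonic on the punctured ball, classical potential theory gives $u\in L^1_{\mathrm{loc}}(B_1(0))$ and shows $-\Delta u$ extends to a nonnegative Radon measure on $B_1(0)$ (useful for the Riesz representation below). Next, the average $\phi(r)$ of $u$ over $\partial B_r(0)$ satisfies $\phi''(r)+\frac{n-1}{r}\phi'(r)\le0$ --- its left side being the spherical average of $\Delta u\le0$ --- so $r^{n-1}\phi'(r)$ is nonincreasing on $(0,\tfrac12)$, hence bounded below by its value at $r=\tfrac12$; integrating yields $\phi(r)\le C\Gamma(r)$, and integrating once more in $r$ gives $\int_{r<|y|<2r}u\,dy\le Cr^2$ for $0<r<\tfrac14$. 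Finally, testing the equation against $\zeta^{2}(u+\varepsilon)^{\gamma}$ with $\gamma\in(-1,0)$ and a cutoff $\zeta$ supported in $\{r/2<|y|<4r\}$, $|\nabla\zeta|\le C/r$, discarding the nonnegative right-hand side $\int(-\Delta u)(u+\varepsilon)^{\gamma}\zeta^2\ge0$, using Young's and Sobolev's inequalities, and letting $\varepsilon\to0$ produces a Caccioppoli-type bound
\[
 \int_{r<|y|<2r}u^{(\gamma+1)\frac{n}{n-2}}\,dy\le\Bigl(\frac{C}{r^{2}}\int_{r/2<|y|<4r}u^{\gamma+1}\,dy\Bigr)^{\frac{n}{n-2}},
\]
into which the $L^1$ bound (via H\"older on the larger annulus) feeds to give
\[
 \int_{r<|y|<2r}u^{q}\,dy\le C_{q}\,r^{\,n-q(n-2)}\qquad\text{for every }q<\tfrac{n}{n-2},\ \ 0<r<\tfrac14 .
\]

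The second ingredient is the Harnack inequality $\sup_{\partial B_r(0)}u\le C\inf_{\partial B_r(0)}u$ for all small $r$; granting it, $\inf_{\partial B_r}u\le\phi(r)\le C\Gamma(r)$ finishes the proof. To obtain it, rescale $v(z):=u(rz)/\Gamma(r)$ on a fixed annulus; the estimates above become uniform bounds on $\|v\|_{L^q}$ for every $q<n/(n-2)$, and --- the exponent $n/(n-2)$ being critical for this scaling --- the differential inequality becomes $0\le-\Delta v\le o(1)+Cv^{n/(n-2)}$ as $r\to0$. A De~Giorgi--Nash--Moser iteration would then upgrade this, together with $\|v\|_{L^q}\le C_q$, to an $L^\infty$ bound on $v$ over a smaller annulus, which --- chaining balls along $\partial B_r(0)$ --- is exactly the asserted Harnack inequality.

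I expect this last step to be the crux, because the exponent $n/(n-2)$ is the Serrin borderline exponent. The Moser iteration for $-\Delta v\le Vv+(\text{lower order})$ with $-\Delta v\ge0$ requires the potential $V\sim v^{2/(n-2)}$ to lie in $L^{n/2}_{\mathrm{loc}}$ with controlled norm, whereas the bootstrap above only yields $V\in L^p$ for $p<n/2$; equivalently, the power $n-q(n-2)$ above vanishes as $q\uparrow n/(n-2)$, so the subcritical estimates alone do not rule out a truncated-fundamental-solution ``spike'' of $u$ at a scale $\ll|x|$ --- which is precisely the mechanism making the conclusion fail once $f$ grows faster than allowed. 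Overcoming this --- i.e.\ promoting the integral bound to the critical level $\int_{r<|y|<2r}u^{n/(n-2)}\,dy\le C$, or, what amounts to the same thing, controlling the near-diagonal piece $\int_{|x-y|<|x|/10}|x-y|^{2-n}g(y)\,dy$ of the Riesz potential of $g$ --- is where one must use the superharmonicity of $u$ and the subcriticality of $f$ jointly rather than separately, and is the substance of the argument; the case $n=2$ is its Trudinger--Moser-critical analogue.
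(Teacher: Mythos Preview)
The paper does not itself prove Theorem~\ref{thm1.2}; it is quoted from the author's earlier work \cite{T1},~\cite{T2}, and the present paper concentrates on the cases $m\ge 3$ (conditions (iv) and (v)). So there is no proof here to compare your proposal against directly. That said, your scaffolding---extracting $g:=-\Delta u\in L^1(B_1)$ from nonnegativity and superharmonicity via the spherical-mean ODE, bounding $\bar u(r)$ by $C\Gamma(r)$, writing $u$ as (Newton potential of $g$) $+$ (harmonic) $+$ $c\,\Gamma$, and rescaling to a fixed annulus---is the right one, and matches the representation-formula method (Lemmas~\ref{lem2.1}--\ref{lem2.3} and the proof of Theorem~\ref{thm3.1}) that the paper uses for the higher-order problems.

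The gap is exactly the one you name and then leave open. At $\lambda=n/(n-2)$ the Riesz-potential bootstrap is \emph{scale-invariant}: from $\|f_j\|_{L^1}\to 0$ one obtains $\|I_2 f_j\|_{L^q}\to 0$ only for $q<n/(n-2)$, and feeding this back into $f_j\le (C+I_2 f_j)^{n/(n-2)}$ yields no gain in integrability. Indeed, the iteration at the end of the proof of Theorem~\ref{thm3.1} needs the \emph{strict} inequality $\lambda_k< n/(n-2m+k)$ precisely so that each step gains a fixed positive amount $\frac1p-\frac1q$; for $m=1$ and $\lambda=n/(n-2)$ that margin is zero. Your sentence ``overcoming this \dots\ is the substance of the argument'' is an honest acknowledgment that the key step is missing. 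The argument in \cite{T2} supplies an additional idea---exploiting the smallness $\int_{B_r}g\to 0$ (equivalently $\int f_j\to 0$) not merely as the seed of an iteration but to control the set where the near-diagonal Newton potential is large and thereby defeat the critical scaling---which is not recoverable from a straight De~Giorgi--Nash--Moser loop with subcritical input. The $n=2$ case handled in \cite{T1} has the analogous obstruction at the Trudinger--Moser endpoint, as you correctly anticipate. As written, your proposal is a correct reduction to the hard step, not a proof.
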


By Remark~\ref{rem1.1} the bound \eqref{eq1.4} for $u$ in Theorems~\ref{thm1.1} and \ref{thm1.2} is optimal.

By the following theorem, the condition \eqref{eq1.5} on $f$ in Theorem~\ref{thm1.2} for the existence of a pointwise bound for $u$ is essentially optimal.

\begin{thm}\label{thm1.3}
Suppose $m$ and $n$ are integers satisfying (ii), (resp. (iii)),
and $f\colon [0,\infty)\to [0,\infty)$ is a continuous function satisfying
\begin{equation}\label{eq1.6}
 \lim_{t\to\infty} \frac{f(t)}{t^{n/(n-2)}} = \infty,\quad \left(\text{resp. } \lim_{t\to\infty} \frac{\log(1+f(t))}t = \infty\right).
\end{equation}
Then for each continuous function $\varphi\colon (0,1)\to (0,\infty)$
there exists a $C^2$ positive solution $u(x)$ of \eqref{eq1.1}
such that
\[
 u(x) \ne O(\varphi(|x|))\quad \text{as}\quad x\to 0.
\]
\end{thm}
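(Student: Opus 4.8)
The plan is to build, for a prescribed $\varphi$, a positive superharmonic $u\in C^2(B_2(0)\setminus\{0\})$ whose singularity at the origin is worse than $\varphi$, by superposing ever taller, ever more concentrated spikes along points $x_j\to0$. Put $x_j:=2^{-j}e_1$ and $r_j:=2^{-j-3}$, so the balls $B_{r_j}(x_j)$ are pairwise disjoint and contained in $B_2(0)\setminus\{0\}$ (an elementary check); fix once and for all a profile $\psi\in C^\infty_c(B_1(0))$ with $\psi\ge0$ and $\int\psi=1$; and, for parameters $M_j>0$ and $\delta_j\in(0,r_j)$ to be chosen, set
\[
 g_j(x):=M_j\,\delta_j^{-n}\,\psi\!\left(\tfrac{x-x_j}{\delta_j}\right),\qquad u(x):=\Gamma(|x|)+\sum_{j\ge1}N_j(x) ,
\]
where $N_j\in C^\infty(\bb R^n)$ is the Newtonian potential of $g_j$ (so $N_j>0$, $-\Delta N_j=g_j$ on $\bb R^n$, and $N_j(x)=c_n\int_{\bb R^n}|x-y|^{2-n}g_j(y)\,dy$), and $\Gamma$ is as in \eqref{eq1.3}. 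I take $n\ge3$; the case $n=2$ is handled the same way, with $c_n|x-y|^{2-n}$ replaced by $\log\frac5{|x-y|}$ (which is positive for $x,y\in B_2(0)$) and the power growth of $f$ replaced by exponential growth, see the last paragraph. Granting that the series converges in $C^2_{\mathrm{loc}}(B_2(0)\setminus\{0\})$, we get $u>0$ and, since $x\mapsto\Gamma(|x|)$ is harmonic on $\bb R^n\setminus\{0\}$, $-\Delta u=\sum_jg_j\ge0$ there; and since the supports $\mathrm{supp}\,g_j\subset B_{\delta_j}(x_j)$ are pairwise disjoint, $-\Delta u$ equals the single bump $g_j$ on $B_{\delta_j}(x_j)$ and vanishes off $\bigcup_jB_{\delta_j}(x_j)$. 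So it remains to choose the parameters so that: (A) $g_j\le f(u)$ on $B_{\delta_j}(x_j)$ for every $j$; (B) $u(x_j)/\varphi(|x_j|)\to\infty$; (C) $\sum_jM_j<\infty$.

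The mechanism is scaling. Write $P(z):=c_n\int_{\bb R^n}|z-w|^{2-n}\psi(w)\,dw$, which is continuous and positive, so $c_P:=\min_{\overline{B_1(0)}}P>0$, and put $T_j:=M_j\delta_j^{2-n}$. A change of variables gives $N_j(x_j+\delta_jz)=T_jP(z)$, so on $B_{\delta_j}(x_j)$ we have $u\ge N_j\ge c_PT_j$, while $g_j\le\|\psi\|_\infty M_j\delta_j^{-n}=\|\psi\|_\infty M_j^{-2/(n-2)}T_j^{n/(n-2)}$. Read \eqref{eq1.6} as: for every $R>0$ there is $t_0(R)<\infty$ with $f(t)\ge Rt^{n/(n-2)}$ for all $t\ge t_0(R)$ --- this is precisely what it means for the limit in \eqref{eq1.6} to be $+\infty$, and it requires no monotonicity of $f$. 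Fix $R_j:=2^j$. If $c_PT_j\ge t_0(R_j)$, then $f(u)\ge R_j(c_PT_j)^{n/(n-2)}$ on $B_{\delta_j}(x_j)$, so (A) holds as soon as $R_jc_P^{n/(n-2)}\ge\|\psi\|_\infty M_j^{-2/(n-2)}$, i.e. as soon as
\[
 M_j\ \ge\ \Big(\|\psi\|_\infty\,c_P^{-n/(n-2)}\,R_j^{-1}\Big)^{(n-2)/2}=:m_j .
\]
The point is that the differential inequality constrains only the spike \emph{mass} $M_j=\int g_j$, not its height; so set $M_j:=m_j$, which with $R_j=2^j$ gives $M_j=c\,2^{-j(n-2)/2}$ and hence $\sum_jM_j<\infty$ --- this is (C).

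Now $M_j$ is fixed but $T_j$ --- equivalently $\delta_j=(M_j/T_j)^{1/(n-2)}$ --- is still at our disposal, and we take $T_j$ larger than each of the three $T_j$-independent numbers $t_0(R_j)/c_P$, $\ j\,\varphi(2^{-j})/c_P$, and $M_jr_j^{-(n-2)}$; the third choice forces $\delta_j<r_j$, as needed. Then $u(x_j)\ge N_j(x_j)\ge c_PT_j\ge j\,\varphi(2^{-j})=j\,\varphi(|x_j|)$, so $u(x_j)/\varphi(|x_j|)\ge j\to\infty$ with $x_j\to0$; this is (B), giving $u(x)\ne O(\varphi(|x|))$ as $x\to0$. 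Finally (C) yields the $C^2_{\mathrm{loc}}$-convergence: given $K\subset\subset B_2(0)\setminus\{0\}$ with $d:=\mathrm{dist}(K,\{0\})>0$, for all large $j$ (so that $|x_j|+\delta_j<d/2$) and all $x\in K$ one has $\mathrm{dist}(x,\mathrm{supp}\,g_j)\ge d/2$, whence $\|N_j\|_{C^2(K)}\le C(n,d)M_j$; together with the remaining finitely many smooth $N_j$ and with $x\mapsto\Gamma(|x|)$ being $C^\infty$ on $\bb R^n\setminus\{0\}$, this gives $u\in C^2(K)$ and $-\Delta u=\sum_jg_j$ on $K$ by termwise differentiation.

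The only genuine obstacle is the apparent conflict between (A) and (C): beating an arbitrary $\varphi$ needs arbitrarily tall spikes, yet the superposition must stay $C^2$ near every other spike and every other point. It is dissolved by the decoupling of height $T_j=M_j\delta_j^{2-n}$ from mass $M_j$ --- concentrating a \emph{small} mass on a \emph{tiny} scale produces an arbitrarily high, arbitrarily localized superharmonic spike --- together with the fact that, when $f$ grows faster than $t^{n/(n-2)}$, the inequality $-\Delta u\le f(u)$ forces $M_j$ only above a threshold that tends to $0$ as the gain $R_j\to\infty$. For $n=2$, the rescaled potential of $g_j$ has peak of order $M_j\log\frac1{\delta_j}$ and $\|g_j\|_\infty$ of order $M_j\delta_j^{-2}$, and \eqref{eq1.6}, now as ``$f(t)\ge e^{Rt}-1$ for $t\ge t_0(R)$'', again reduces (A) to a bound $M_j\ge m_j(R_j)$ with $m_j(R_j)\to0$; the only new point is that $\delta_j\to0$ (which holds since $T_j\to\infty$) absorbs the lower-order terms of the potential's expansion.
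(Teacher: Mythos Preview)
The paper does not itself prove Theorem~\ref{thm1.3}; it is quoted from the earlier papers \cite{T1} and \cite{T2}, so there is no in-paper proof to compare against directly. Your argument is correct and is, in fact, the $m=1$ specialization of the construction the present paper develops in Lemma~\ref{lem2.4} and Remark~\ref{rem2.1} for the higher-order cases: a fundamental solution of $-\Delta^m$ plus the potentials of disjoint, summable-mass bumps, with the differential inequality checked bump by bump. (For $m=1$ the Taylor-corrected kernel $\Psi$ in \eqref{eq2.4} collapses to the bare fundamental solution $\Phi$, so your Newtonian-potential formulation is exactly what that lemma becomes.) One expository difference worth noting: you explicitly decouple the bump \emph{mass} $M_j=\int g_j$ from its \emph{height} $T_j=M_j\delta_j^{2-n}$ and choose them in that order, which makes the central mechanism---summable total mass but arbitrarily tall spikes, with the supercritical growth of $f$ driving the admissible $M_j$ to zero---more transparent than the single-line parameter check \eqref{eq2.36-2}; this also handles an arbitrary $\varphi$ without needing to quantify its growth in advance. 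The $n=2$ sketch is likewise correct: the logarithmic peak $\sim M_j\log(1/\delta_j)$ and the exponential lower bound on $f$ again reduce (A) to a threshold $M_j\ge m_j\to 0$, exactly as in your $n\ge 3$ computation.
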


If $m$ and $n$ satisfy (i), (ii), or (iii), then according to
Theorems~\ref{thm1.1}, \ref{thm1.2}, and \ref{thm1.3}, either the
optimal pointwise bound for $u$ is given by \eqref{eq1.4} or there
does not exists a pointwise bound for $u$, (provided we don't allow
the rather uninteresting and pathological possibility when 
$m$ and $n$ satisfy (ii), (resp. (iii)),
that $f$ satisfies neither \eqref{eq1.5} nor \eqref{eq1.6}).

The situation is very different and more interesting when $m$ and $n$ satisfy (iv) or (v). In this case, according to the following results, there are an infinite number of different optimal pointwise bounds for $u$ depending on $f$.

The following three theorems deal with Question~1 when $m$ and $n$ satisfy (iv).

\begin{thm}\label{thm1.4}
Let $u(x)$ be a $C^{2m}$ nonnegative solution of \eqref{eq1.1} where the integers $m$ and $n$ satisfy (iv) and $f\colon [0,\infty)\to [0,\infty)$ is a continuous function satisfying
\[
 f(t) = O(t^\lambda)\quad \text{as}\quad t\to\infty
\]
where
\[
 0 \le \lambda \le \frac{2m+n-2}{n-2},\quad \left(\text{resp. } \frac{2m+n-2}{n-2} < \lambda < \frac{n}{n-2m}\right).
\]
Then as $x\to 0$,
\begin{align}\label{eq1.7}
 u(x) &= O(|x|^{-(n-2)}),\\
\label{eq1.8}
\Big(\text{resp. } u(x) &= o(|x|^{-a})\quad \text{where } 
a=\frac{4m(m-1)}{n-\lambda(n-2m)}\Big).
\end{align}
\end{thm}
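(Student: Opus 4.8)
The plan is to prove \eqref{eq1.7} and \eqref{eq1.8} by a bootstrap on the pointwise decay rate of $u$ at the origin, driven by a representation of $u$ near $0$ in terms of $N:=-\Delta^m u\ge 0$ and a polyharmonic remainder. After the usual normalizations we may assume $0\le f(t)\le A(1+t^{\lambda})$ for all $t\ge0$ and work on $B_1(0)\setminus\{0\}$. The basic identity is: once $N\in L^1(B_{1/2})$,
\[
 u(x)=\int_{|y|<1/2}\Phi_m(x-y)\,N(y)\,dy+P(x)\qquad\text{in }B_{1/2}\setminus\{0\},
\]
where $\Phi_m(z)=c_{m,n}|z|^{2m-n}$ with $c_{m,n}>0$ is the fundamental solution of $(-\Delta)^m$ and $P$ is polyharmonic ($\Delta^m P=0$) in $B_{1/2}\setminus\{0\}$. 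That $N\in L^1$ near $0$ — equivalently, that the distribution $\Delta^m u$ on $B_1$ has no pathological part at $0$ — is not automatic for a merely nonnegative $m$-superharmonic $u$ (there is no maximum principle since $m\ge3$); I would obtain it by integrating $-\Delta^m u$ against cut-offs and absorbing the resulting boundary terms on small spheres via interior estimates for $\Delta^m$ on dyadic annuli.

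\textbf{The polyharmonic remainder and \eqref{eq1.7}.} Since $\Phi_m>0$ and $u\ge0$, the convolution term above is nonnegative, so $P$ is bounded below near $0$ by the negative of that term. Using the classification of singular solutions of $\Delta^m P=0$ — the radial ones spanned by $|x|^{2-n},\dots,|x|^{2m-n}$ and $1,\dots,|x|^{2m-2}$, and every nonradial singular one being sign-changing and at least as singular as $|x|^{1-n}$ — one gets the dichotomy: if $P(x)\ge -C|x|^{-\gamma}$ near $0$ with $0\le\gamma<n-2$, then $P(x)=O(|x|^{2-n})$. Now bootstrap: if $u(x)=O(|x|^{-\theta})$ (an initial $\theta$ coming from scaled interior estimates on annuli together with $N\in L^1$) then $N(x)=O(|x|^{-\lambda\theta})$, and when $\lambda\theta<n$ a Newtonian-potential estimate (splitting $\{|y|<1/2\}$ into $|y|<|x|/2$, $|x|/2<|y|<2|x|$, $2|x|<|y|<1/2$) gives $\int\Phi_m(x-y)N(y)\,dy=O(|x|^{2m-\lambda\theta})$ when $2m<\lambda\theta<n$, and $O(1)$ or $O(\log\frac1{|x|})$ when $\lambda\theta\le2m$; feeding this back through the dichotomy gives $u(x)=O(|x|^{2-n})$. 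This argument can be pushed all the way down to the exponent $n-2$ exactly when $0\le\lambda\le\frac{2m+n-2}{n-2}$, giving \eqref{eq1.7}.

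\textbf{The range of \eqref{eq1.8} and the $o$.} When $\frac{2m+n-2}{n-2}<\lambda<\frac{n}{n-2m}$ the decay exponent cannot be driven below $n/\lambda$ in this way, and the iteration must instead be run in the regime $\lambda\theta\ge n$, where $N$ need not be integrable: here one works directly on dyadic annuli, rescaling $u$ to a unit annulus, using interior elliptic estimates for $\Delta^m$ and a more careful accounting of the (sign-indefinite) polyharmonic part, still fed by $u\ge0$. The resulting recursion for the admissible exponent has a stationary value, and solving for it yields $a=\frac{4m(m-1)}{n-\lambda(n-2m)}$ and the bound $u(x)=O(|x|^{-a})$ — the condition $\lambda<\frac{n}{n-2m}$ being precisely what makes the recursion converge and keeps $a>0$. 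To upgrade $O$ to $o$: if $\limsup_{x\to0}|x|^{a}u(x)=2\delta>0$, choose $x_j\to0$ with $|x_j|^{a}u(x_j)\to2\delta$ and rescale $u_j(z):=|x_j|^{a}u(|x_j|z)$; the uniform estimates give (subsequentially) $u_j\to u_\infty$ in $C^{2m}_{\mathrm{loc}}(\mathbb{R}^n\setminus\{0\})$ with $u_\infty\ge0$, $u_\infty\not\equiv0$ on $|z|=1$, $u_\infty(z)=O(|z|^{-a})$, and $-\Delta^m u_\infty\ge0$, and analysing the scaling of the limiting differential inequality against the growth of $u_\infty$ yields a contradiction; hence $\limsup_{x\to0}|x|^{a}u(x)=0$, which is \eqref{eq1.8}.

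\textbf{Main obstacle.} The delicate point is the bootstrap in the range $\frac{2m+n-2}{n-2}<\lambda<\frac{n}{n-2m}$, where $-\Delta^m u$ is not known to be integrable near $0$, so the clean representation formula is unavailable: one must control the sign-indefinite polyharmonic part of $u$ through scaled elliptic estimates on annuli while still extracting enough from $u\ge0$ alone, and it is exactly this bookkeeping that produces both the threshold $\lambda=\frac{2m+n-2}{n-2}$ and the exponent $a=\frac{4m(m-1)}{n-\lambda(n-2m)}$. The other technical hurdle is the $L^1$-representation itself for nonnegative $m$-superharmonic functions when $m$ is odd and $2m<n$.
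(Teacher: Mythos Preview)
Your strategy has a genuine gap at its foundation: the step ``$N:=-\Delta^m u\in L^1(B_{1/2})$'' cannot be obtained by the cut-off argument you describe, and the rest of your case~(i) argument depends on it. Testing $-\Delta^m u$ against a cut-off $\chi_\varepsilon$ (equal to $1$ outside $B_\varepsilon$, vanishing on $B_{\varepsilon/2}$) and integrating by parts yields a term of size $\varepsilon^{-2m}\int_{|x|\sim\varepsilon}u$, and bounding this uniformly in $\varepsilon$ requires exactly the pointwise control on $u$ you are trying to prove. What \emph{does} follow from $u\ge 0$ and $-\Delta^m u\ge 0$ alone (this is Lemma~\ref{lem2.1}, proved in \cite{GMT}) is the weaker weighted bound
\[
\int_{|y|<1}|y|^{2m-2}\,(-\Delta^m u)(y)\,dy<\infty,
\]
obtained by testing against a polyharmonic weight. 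Under this hypothesis the convolution $\int\Phi_m(x-y)N(y)\,dy$ need not converge, so the paper replaces $\Phi_m(x-y)$ by its Taylor-corrected version $\Psi(x,y)=\Phi(x-y)-\sum_{|\alpha|\le 2m-3}\frac{(-y)^\alpha}{\alpha!}D^\alpha\Phi(x)$, which satisfies $|\Psi(x,y)|\le C|y|^{2m-2}|x|^{2-n}$ for $|y|<|x|/2$ and hence gives a well-defined representation (Lemma~\ref{lem2.2}). A second, related gap is the starting point of your pointwise bootstrap: for $m\ge 3$ odd and $2m<n$ there is no a~priori exponent $\theta$ with $u(x)=O(|x|^{-\theta})$---interior estimates on annuli give nothing without integral control of $u$ on those annuli, which again is what you are after.

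The paper avoids both problems by arguing by contradiction rather than by direct iteration. Assuming the bound fails along a sequence $x_j\to 0$, one rescales by $r_j=|x_j|^{b+1}/4$ (with $b$ as in \eqref{eq3.3}) and sets $f_j(\eta)=|x_j|^{2m-2}r_j^n\,(-\Delta^m u)(x_j+r_j\eta)$. The weighted integrability gives $\int_{|\eta|<2}f_j\to 0$ (equation~\eqref{eq2.16}), while the representation with $\Psi$ turns the failure of the bound into a lower bound on a Riesz potential of $f_j$ (equations~\eqref{eq3.10}--\eqref{eq3.11}). On the other hand the differential inequality yields a self-referential integral inequality for $f_j$ of the form \eqref{eq3.13}--\eqref{eq3.14}, and a finite $L^p$~bootstrap via Riesz potential estimates (\cite[Lemma~7.12]{GT}) then forces $f_j$ to be bounded (resp.\ to tend to~$0$) in $L^\infty$ on a fixed small ball, contradicting \eqref{eq3.10}--\eqref{eq3.11}. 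Note that this contradiction already delivers the little-$o$ in \eqref{eq1.8} directly; no separate blow-up/compactness step is needed.
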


Since $a$ in \eqref{eq1.8} is also given by 
\begin{equation}\label{eq1.8-2}
a=n-2+\frac{\lambda(n-2)-(2m+n-2)}{n-\lambda(n-2m)}(n-2m)
\end{equation}
we see that $a$ increases from $n-2$ to infinity as $\lambda$ increases
from $\frac{2m+n-2}{n-2}$ to $\frac{n}{n-2m}$.

By Remark~\ref{rem1.1}, the bound \eqref{eq1.7} is optimal and by the following theorem so is the bound \eqref{eq1.8}.

\begin{thm}\label{thm1.5}Suppose $m$ and $n$ are integers satisfying (iv) and $\lambda$ and $a$ are constants satisfying
\begin{equation}\label{eq1.9}
 \frac{2m+n-2}{n-2} < \lambda < \frac{n}{n-2m}\quad \text{and}\quad a = \frac{4m(m-1)}{n-\lambda(n-2m)}.
\end{equation}
Let $\varphi\colon (0,1)\to (0,1)$ be a continuous function satisfying $\lim\limits_{r\to 0^+} \varphi(r) = 0$. Then there exists a $C^\infty$ positive solution $u(x)$ of
\begin{equation}\label{eq1.10}
 0 \le -\Delta^m u \le u^\lambda\quad \text{in}\quad {\bb R}^n\backslash\{0\}
\end{equation}
such that
\begin{equation}\label{eq1.11}
 u(x)\ne O\left(\varphi(|x|)|x|^{-a}\right)\quad \text{as}\quad x\to 0.
\end{equation}
\end{thm}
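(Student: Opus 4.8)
The plan is to build $u$ by splicing together, on a sequence of concentric annuli shrinking to the origin, rescaled copies of a single ``building block'' solution, tuning the block, the scaling of the equation, and the rate at which the annuli shrink so that $u$ overwhelms $\varphi(|x|)\,|x|^{-a}$ on a sequence of spheres.

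\emph{Reduction.} First I would note that, since $\varphi(r)\to 0$, it suffices to produce a positive $C^\infty$ radial solution $u$ of $0\le-\Delta^m u\le u^\lambda$ on $\mathbb R^n\setminus\{0\}$ together with $\rho_k\downarrow 0$ and numbers $\eta_k>0$ with $\eta_k/\varphi(\rho_k)\to\infty$ and $u(x)\ge\eta_k|x|^{-a}$ on $\{|x|=\rho_k\}$; then $u(x_k)/(\varphi(|x_k|)|x_k|^{-a})\ge\eta_k/\varphi(\rho_k)\to\infty$ for $x_k$ with $|x_k|=\rho_k$. In the construction $\eta_k$ will be a product $\prod_{j\le k}\theta$ of fixed loss factors $\theta\in(0,1)$; one beats $\varphi$ because the $\rho_k$ (equivalently, the ratios $\mu_k:=\rho_{k-1}/\rho_k$) may be chosen inductively, and as large as we please, \emph{after} $\varphi$ is given: having fixed $\rho_{k-1}$, choose $\rho_k$ so small that $\varphi(\rho_k)<\theta^{\,k}/k$, which is possible since $\varphi\to0$, so that $\eta_k/\varphi(\rho_k)>k\to\infty$. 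The value $u=o(|x|^{-a})$ required by Theorem~\ref{thm1.4} corresponds here to $\theta^{\,k}\to0$, so no contradiction arises.

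\emph{Scaling.} I would record that if $v$ solves $0\le-\Delta^m v\le v^\lambda$ on $\{R_1<|x|<R_2\}$ and $A\ge\sigma^{-2m/(\lambda-1)}$, then $A\,v(\cdot/\sigma)$ solves it on $\{\sigma R_1<|x|<\sigma R_2\}$; this forces the spliced pieces to carry larger amplitude as one moves inward, and it is compatible with the growth demanded above precisely because $a>\tfrac{2m}{\lambda-1}$, which holds since $a\ge n-2$ (see the line after \eqref{eq1.8-2}) while $n-2>\tfrac{2m}{\lambda-1}$ for all $\lambda>\tfrac{2m+n-2}{n-2}$.

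\emph{The building block, and the hard part.} The crux is to construct, for each large $\mu$, a positive $C^\infty$ radial solution $V_\mu$ of $0\le-\Delta^m V\le V^\lambda$ on $\{1<|x|<\mu\}$ whose germs at the two bounding spheres are universal up to amplitude (so that rescaled copies can be glued together $C^\infty$, after a harmless mollification on a collar of width $\gg\rho_k^{-2m/(\lambda-1)}$, inside which the mollified source stays between $0$ and $u^\lambda$) and which grows across the annulus by a factor $\ge\theta\mu^{a}$. The difficulty is the sign condition $-\Delta^m V\ge0$: one cannot realise this growth by adding a localized bump to a polyharmonic background, because a compactly supported $\phi$ has $\int\Delta^m\phi=0$ so $-\Delta^m\phi$ changes sign; more generally (using that $-\Delta^m=(-\Delta)^m$ for $m$ odd has positive fundamental solution), a radial solution for which $-\Delta^m V$ is supported in a thin set is of the form ``polyharmonic $+$ Riesz potential of a positive measure'' and therefore grows at most like $|x|^{-(n-2)}$, which is \emph{less} singular than $|x|^{-a}$ when $\lambda>\tfrac{2m+n-2}{n-2}$. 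Hence $-\Delta^m V_\mu$ must be comparable to $V_\mu^\lambda$ on a substantial part of the annulus, and I would build $V_\mu$ from a solution of the Emden--Fowler equation $-\Delta^m V=V^\lambda$: the substitution $V(x)=|x|^{-2m/(\lambda-1)}y(\log|x|)$ reduces the radial equation to the autonomous ODE
\[
 -\prod_{j=0}^{m-1}\Bigl(D-\tfrac{2m}{\lambda-1}-2j\Bigr)\Bigl(D-\tfrac{2m}{\lambda-1}+n-2-2j\Bigr)y=y^\lambda,\qquad D=\tfrac{d}{ds},
\]
whose linearization at $0$ has exponents $\tfrac{2m}{\lambda-1}+2j$ and $\tfrac{2m}{\lambda-1}+2j+2-n$ $(0\le j\le m-1)$. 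The task becomes exhibiting a positive solution $y$ with prescribed small germs near one value of $s$ but large at another, while $-\Delta^m V\ge0$ is maintained; the two inequalities together cap the attainable growth of $V_\mu$ at order $|x|^{-a}$ with $a=\tfrac{4m(m-1)}{n-\lambda(n-2m)}$, and showing this bound is essentially attained is the main analytic work — the sharpness counterpart of Theorem~\ref{thm1.4}.

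\emph{Assembly.} Finally I would choose $1>\rho_0>\rho_1>\cdots\to0$ as in the Reduction, set $\mu_k=\rho_{k-1}/\rho_k$, and on $\{\rho_k<|x|<\rho_{k-1}\}$ place $A_kV_{\mu_k}(\cdot/\rho_k)$, the amplitudes $A_k$ being determined recursively by the $C^\infty$ matching at $|x|=\rho_k$ and chosen large enough that $A_k\ge\rho_k^{-2m/(\lambda-1)}$ (possible since the built-in growth $\mu_k^{a}$ exceeds $\mu_k^{2m/(\lambda-1)}$). Telescoping the matching relations gives $u(x)\asymp A_k\asymp\rho_k^{-a}\prod_{j\le k}\theta=\eta_k\,\rho_k^{-a}$ on $\{|x|=\rho_k\}$, as required, and extending $u$ beyond $|x|=\rho_0$ by a positive decaying radial polyharmonic function matched to infinite order (where $-\Delta^m u=0\le u^\lambda$) completes the construction on all of $\mathbb R^n\setminus\{0\}$. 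The step I expect to dominate the work is the building block of the third paragraph: reconciling near-extremal growth with the sign condition, and pinning the attainable exponent to exactly $a$.
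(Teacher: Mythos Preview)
Your approach has a fatal structural flaw: you propose a \emph{radial} construction, but Theorem~\ref{thm1.12} (which uses only the sign condition $-\Delta^m u\ge0$, not the upper bound) asserts that \emph{every} $C^{2m}$ nonnegative radial function with $-\Delta^m u\ge0$ in a punctured ball satisfies $u(x)=O(|x|^{-(n-2)})$ as $x\to0$. Since $a>n-2$ under \eqref{eq1.9} (see \eqref{eq1.8-2}), for the admissible choice $\varphi(r)=r^{(a-n+2)/2}$ one has $\varphi(|x|)\,|x|^{-a}=|x|^{-(a+n-2)/2}$ with $(a+n-2)/2>n-2$, so any radial $u$ automatically satisfies $u(x)=o(\varphi(|x|)|x|^{-a})$ and \eqref{eq1.11} cannot hold. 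No amount of ODE analysis of the Emden--Fowler equation rescues this: the obstruction is purely the combination of radiality, nonnegativity, and $-\Delta^m u\ge0$. You nearly diagnose the issue yourself in your third paragraph (``a radial solution \dots\ grows at most like $|x|^{-(n-2)}$''), but you attribute it to the source being thinly supported; in fact it holds for \emph{all} nonnegative radial superpolyharmonic functions, including your spliced Emden--Fowler profiles. This is exactly why the paper stresses that Theorems~\ref{thm1.3}--\ref{thm1.11} are ``nonradial''.

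The paper's construction is accordingly non-radial and much simpler than what you outline. One fixes a sequence of \emph{off-centre} points $x_j$ with $0<4|x_{j+1}|\le|x_j|\le\tfrac12$ and small radii $r_j\le|x_j|/5$, takes smooth bumps $\varphi_j$ supported in the disjoint balls $B_{r_j}(x_j)$, and sets $f=\sum_j M_j\varphi_j$ with $M_j=\psi(|x_j|)\,|x_j|^{2-2m}r_j^{-n}$ for a suitable auxiliary $\psi\ge\varphi^p$ with $\sum\psi(|x_j|)<\infty$. Defining $u=N+C|x|^{-(n-2)}$, where $N$ is the Taylor-corrected Newton potential \eqref{eq2.9} of $-f$, Lemma~\ref{lem2.2} yields the lower bound $u\ge A\psi(|x_j|)\,|x_j|^{2-2m}r_j^{2m-n}$ on $B_{r_j}(x_j)$ (Lemma~\ref{lem2.4}). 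Choosing $r_j$ so that equality holds in \eqref{eq2.36-2} of Remark~\ref{rem2.1} (Case~1) simultaneously forces $-\Delta^m u\le u^\lambda$ and, after a two-line computation, $u(x_j)\ge C\varphi(|x_j|)^{1/2}|x_j|^{-a}$, giving \eqref{eq1.11}. The entire proof is under a page; the work is in the potential-theoretic Lemmas~\ref{lem2.2} and \ref{lem2.4}, not in any ODE or gluing argument.
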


With regard to Theorem~\ref{thm1.4}, it is natural to ask what happens when $\lambda\ge \frac{n}{n-2m}$. The answer, given by the following theorem, is that the solutions $u$ can be arbitrarily large as $x\to 0$.

\begin{thm}\label{thm1.6}
  Suppose $m$ and $n$ are integers satisfying (iv) and $\lambda\ge
  \frac{n}{n-2m}$ is a constant. Let $\varphi\colon (0,1)\to
  (0,\infty)$ be a continuous function satisfying $\lim\limits_{r\to
    0^+} \varphi(r)=\infty$. Then there exists a $C^\infty$ positive
  solution $u(x)$ of
\begin{equation}\label{eq1.12}
 0 \le -\Delta^mu \le u^\lambda\quad \text{in}\quad {\bb R}^n\backslash\{0\}
\end{equation}
such that
\[
 u(x)\ne O(\varphi(|x|)) \quad \text{as}\quad x\to 0.
\]
\end{thm}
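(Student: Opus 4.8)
We construct $u$ as the sum of a large fixed singular polyharmonic function, a positive constant, and a convergent superposition of polyharmonic ``potentials'' of extremely tall, extremely thin bumps placed along a sequence $x_j\to 0$. The bumps will force $u\neq O(\varphi(|x|))$, the singular polyharmonic term will give positivity near the origin, and the constant will give positivity near infinity. Because $m$ is odd and $2m<n$, there is $c_{m,n}>0$ with $E(x):=c_{m,n}|x|^{2m-n}>0$ a fundamental solution, $-\Delta^m E=\delta_0$, so for $g\in C^\infty_c(\bb R^n)$ the potential $\int E(x-y)g(y)\,dy$ is a $C^\infty$ solution of $-\Delta^m(\,\cdot\,)=g$; also $|x|^{2-n}$ is harmonic, hence polyharmonic, on $\bb R^n\setminus\{0\}$. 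Fix a radial $\chi\in C^\infty_c(B_1(0))$ with $0\le\chi\le1$, $\chi\equiv1$ on $B_{1/2}(0)$, set $t_j=2^{-j}$, $x_j=t_je_1$, and, with parameters $0<\sigma_j\ll t_j$, $M_j>0$ to be chosen, put
\[
g_j(x)=M_j\,\chi\!\big((x-x_j)/\sigma_j\big),\qquad \mu_j=\int_{\bb R^n}g_j\,dx,\qquad g=\sum_{j\ge1}g_j\ge0,
\]
\[
u(x)=a+K|x|^{2-n}+\sum_{j\ge1}v_j(x),\qquad v_j(x):=\int_{\bb R^n}\big(E(x-y)-E(x)\big)g_j(y)\,dy,
\]
with $a,K>0$ large. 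Since $v_j=\big(\int E(x-y)g_j(y)\,dy\big)-\mu_j E(x)$, each $v_j$ is $C^\infty$ on $\bb R^n\setminus\{0\}$, and $-\Delta^m v_j=g_j$ there (the $\delta_0$ coming from $-\Delta^m(\mu_jE)$ sits at the origin). Taylor's theorem applied to $E(x-\cdot)$ about $0$ gives $\|v_j\|_{C^k(\Omega)}\le C(k,\Omega)\mu_jt_j$ for $\Omega\ssub\bb R^n\setminus\{0\}$ and large $j$, so the series converges in $C^\infty_{\mathrm{loc}}(\bb R^n\setminus\{0\})$ provided $\sum_j\mu_jt_j<\infty$; then $-\Delta^m u=g\ge0$, which is the left half of \eqref{eq1.12}. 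The subtraction of $\mu_jE(x)$ is the crux: the bare potential has far field $\sim\mu_jE(x)$, and the constraint $-\Delta^m u\le u^\lambda$ will force $\mu_j$ to stay bounded away from $0$ when $\lambda=\tfrac n{n-2m}$, making $\sum_j$ of the bare potentials diverge; replacing it by the first-order difference makes $v_j$ only $O\!\big(|x|^{2m-n-1}\mu_jt_j\big)$ away from $x_j$, hence summable.

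For the parameters, fix $C>0$ large (depending only on $m,n,\lambda,\chi$) and set
\[
M_j=\max\big\{\,C\,\sigma_j^{-2m\lambda/(\lambda-1)},\ \ j\,\varphi(t_j)\,\sigma_j^{-2m}\,\big\},
\]
then choose $\sigma_j>0$ so small (depending on $t_1,\dots,t_j$, on $\varphi(t_j)$, and on the earlier $\sigma_k,M_k$) that the balls $B_{2\sigma_j}(x_j)$ are pairwise disjoint and miss $0$, that $\mu_j=\|\chi\|_{L^1}M_j\sigma_j^n\le 2^{-j}$ (possible since $2m<n$ makes $j\varphi(t_j)\sigma_j^{n-2m}\to0$, while $C\sigma_j^{\,n-2m\lambda/(\lambda-1)}\le C$ when $\lambda=\tfrac n{n-2m}$ and $\to0$ otherwise), and that $\sigma_j/t_j$ is small enough for the domination estimates below. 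Then $u\in C^\infty(\bb R^n\setminus\{0\})$, $-\Delta^m u=g\ge0$. For $-\Delta^m u\le u^\lambda$: off $\operatorname{supp}g$ it is trivial; on $\operatorname{supp}g_j$ we have $g=g_j\le M_j$ and, from the positivity step below, $u\ge\tfrac12 c_1M_j\sigma_j^{2m}$ with $c_1=c_1(m,n,\chi)>0$, so it suffices that $M_j\le\big(\tfrac12 c_1M_j\sigma_j^{2m}\big)^{\lambda}$, i.e. $M_j\ge(\tfrac{c_1}2)^{-\lambda/(\lambda-1)}\sigma_j^{-2m\lambda/(\lambda-1)}$, which holds once $C\ge(\tfrac{c_1}2)^{-\lambda/(\lambda-1)}$. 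Finally $u(x_j)\ge\tfrac12 c_1M_j\sigma_j^{2m}\ge\tfrac12 c_1\,j\,\varphi(t_j)$, so $u(x_j)/\varphi(|x_j|)\ge\tfrac12 c_1 j\to\infty$ and $u\neq O(\varphi(|x|))$ as $x\to0$.

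It remains to show $u>0$ on $\bb R^n\setminus\{0\}$, the main obstacle, since the terms $-\mu_jE$ (and some $v_j$) change sign. I would split $\bb R^n\setminus\{0\}$ into (a) $0<|x|<1$, (b) the bump regions $\bigcup_j B_{t_j/10}(x_j)$, (c) $|x|\ge1$. On (c): for $|x|\ge1$ each $|v_j(x)|\le C\mu_j$ with $\sum_j\mu_j\le1$, so $u\ge a-C>0$ for $a$ large, and $u\to a$ at infinity. On (b) with $j$ large: inside $B_{t_j/10}(x_j)$ the positive term $\int E(x-y)g_j(y)\,dy$ dominates $\mu_jE(x)\approx\mu_jt_j^{2m-n}$ and the remainder $\sum_{k\neq j}v_k=O(t_j^{2m-n-1})$, because $M_j\sigma_j^{2m}\ge C\sigma_j^{\,2m-2m\lambda/(\lambda-1)}\to\infty$ as $\sigma_j\to0$; the finitely many small $j$ are absorbed by enlarging $K$ or $a$. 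On (a): split the series at the scale $|x|$; the bumps with $t_k\le|x|$ contribute $\sum_{t_k\le|x|}O(|x|^{2m-n-1}\mu_kt_k)=O(|x|^{2m-n-1})$ and the at most $O\!\big(\log\tfrac1{|x|}\big)$ bumps with $t_k>|x|$ contribute $\sum_{t_k>|x|}O(\mu_k|x|^{2m-n})=O(|x|^{2m-n})$, and since $m\ge3$ gives $2m-n-1>2-n$ and $2m-n>2-n$, both are $o(|x|^{2-n})$ as $x\to0$, so $u\ge\tfrac12 K|x|^{2-n}>0$ near the origin (this also justifies $u\ge\tfrac12 c_1M_j\sigma_j^{2m}$ on $\operatorname{supp}g_j$ used above). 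I expect this last bookkeeping — dominating $\sum_j v_j$ by $a+K|x|^{2-n}$ while simultaneously keeping $\mu_j$ summable and $M_j$ large enough for both the $u^\lambda$-inequality and the $\varphi$-violation — to be the only genuinely delicate point; the remaining verifications are routine.
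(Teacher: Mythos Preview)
Your construction is correct and follows essentially the same route as the paper: place smooth bumps $g_j$ at points $x_j\to 0$, convolve with a Taylor-subtracted fundamental solution so the resulting series converges in $C^\infty_{\mathrm{loc}}(\bb R^n\setminus\{0\})$, add a multiple of $|x|^{2-n}$ for positivity, and choose the height/width parameters so that both $-\Delta^m u\le u^\lambda$ and $u(x_j)/\varphi(|x_j|)\to\infty$ hold. The paper packages this construction in Lemma~2.4 and Remark~2.1 (Case~1), subtracting the Taylor polynomial of $\Phi$ to order $2m-3$; this gives $|\Psi(x,y)|\lesssim |y|^{2m-2}|x|^{2-n}$, so the potential $N(x)$ is directly $O(|x|^{2-n})$ and positivity follows from a single constant $C|x|^{2-n}$ with no further splitting. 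You subtract only the zeroth-order term, getting $|v_j(x)|\lesssim |x|^{2m-n-1}\mu_j t_j$ away from the $j$-th bump; since $m\ge 3$ this is still $o(|x|^{2-n})$, so your simpler kernel works at the cost of the extra region-by-region bookkeeping you outline.

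One slip to clean up: in the borderline case $\lambda=\tfrac{n}{n-2m}$ you yourself observe that $C\sigma_j^{\,n-2m\lambda/(\lambda-1)}=C$, so $\mu_j\ge C\|\chi\|_{L^1}$ is bounded \emph{away} from zero and cannot satisfy $\mu_j\le 2^{-j}$ as you write. This is harmless for the $C^\infty$ convergence, which only needs $\sum_j\mu_j t_j<\infty$ (and $\mu_j\le C'$ suffices once you force $j\varphi(t_j)\sigma_j^{n-2m}\le C$). But it invalidates the estimates in your regions (a) and (c) where you invoke $\sum_j\mu_j\le 1$: in (c) use instead $|v_j(x)|\lesssim |x|^{2m-n-1}\mu_j t_j$ so that $\sum_j|v_j(x)|\lesssim |x|^{2m-n-1}$; in (a) the tail $\sum_{t_k>|x|}|v_k(x)|$ becomes $O(|x|^{2m-n}\log\tfrac{1}{|x|})$ rather than $O(|x|^{2m-n})$ in the borderline case. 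Both corrected bounds are still $o(|x|^{2-n})$ since $m\ge 3$, so your positivity argument survives unchanged.
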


The following five theorems deal with Question~1 when $m$ and $n$ satisfy (v). This is the most interesting case.

\begin{thm}\label{thm1.7}
Let $u(x)$ be a $C^{2m}$ nonnegative solution of \eqref{eq1.1} where the integers $m$ and $n$ satisfy (v) and $f\colon [0,\infty) \to [0,\infty)$ is a continuous function satisfying
\[
 f(t) = O(t^\lambda)\quad \text{as}\quad t\to\infty
\]
where
\[
 0 \le \lambda\le \frac{2n-2}{n-2},\quad \left(\text{resp. } \lambda > \frac{2n-2}{n-2}\right).
\]
Then as $x\to 0$,
\begin{align}\label{eq1.13}
 u(x) &= O(|x|^{-(n-2)}),\\
\label{eq1.14}
\Big(\text{resp. } u(x) &= o\Big(|x|^{-(n-2)} \log \frac5{|x|}\Big)\Big).
\end{align}
\end{thm}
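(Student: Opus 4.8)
The plan is to reduce both estimates to a representation formula for $-\Delta^m u$ obtained by iterating the Riesz/Green potential, together with the known linear bound of Theorem~\ref{thm1.1}. Write $v=-\Delta^{m-1}u$, so that $v\ge 0$ (since $m$ is odd, $-\Delta^{m-1}=(-\Delta)^{m-1}$ preserves... no: carefully, $-\Delta^m u=-\Delta(\Delta^{m-1}u)\ge 0$ gives that $-\Delta^{m-1}u$ is superharmonic, not signed). A cleaner route: set $h:=-\Delta^m u\ge 0$, so $h$ is a nonnegative $C^0$ function on $B_2\setminus\{0\}$ with $h\le f(u)$. First I would apply Theorem~\ref{thm1.1} — valid since case (v) implies $2m=n$, hence $2m\ge n$, which falls under condition (i) ($2m>n$ fails when $2m=n$, but $m\ge 3$ odd and $2m=n$ means... $m$ odd so $m$ need not be even; however $2m=n$ is exactly the borderline, so Theorem~\ref{thm1.1} does NOT directly apply). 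Instead the starting point must be the a priori bound $u(x)=O(\Gamma(|x|))=O(|x|^{-(n-2)}\log(5/|x|))$ coming from the structure of the problem in case (v); let me take as the base estimate the weaker bound $u(x)=O(|x|^{-(n-2)}\log\frac{5}{|x|})$ on a punctured ball, which one gets by writing $u$ via its polyharmonic representation and controlling the Riesz potential of $h$ using only $h\ge 0$ and the superharmonicity chain. I would first carefully establish this base estimate, possibly by induction on $m$ applying the $m=1$ result (Theorem~\ref{thm1.2}) to successive Laplacians of $u$.

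With the base estimate $u=O(|x|^{-(n-2)}\log\frac5{|x|})$ in hand, the first half ($\lambda\le\frac{2n-2}{n-2}$) proceeds by a bootstrap. From $u=O(|x|^{-(n-2)}\log\frac5{|x|})$ and $2m=n$ we get
\[
h=-\Delta^m u=O(u^\lambda)=O\!\left(|x|^{-\lambda(n-2)}\Bigl(\log\tfrac5{|x|}\Bigr)^{\lambda}\right).
\]
Then I would write $u$ as an $m$-fold Riesz potential of $h$ plus a term annihilated by $\Delta^m$ (which, being polyharmonic on the punctured ball and controlled near $\partial B_1$, contributes only $O(\Gamma(|x|))=O(|x|^{-(n-2)}\log\frac5{|x|})$ near $0$ by Theorem~\ref{thm1.1}/Theorem~\ref{thm1.2}). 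The $m$-fold potential of $|x|^{-\beta}$ near the origin, when $n=2m$, behaves like $|x|^{2m-\beta}=|x|^{n-\beta}$ if $\beta>n$ — and here $\beta=\lambda(n-2)\le 2n-2<2n=2\cdot n$, wait $\lambda(n-2)\le \frac{2n-2}{n-2}\cdot(n-2)=2n-2$; since $2n-2<n$ fails for $n\ge2$, actually $\beta$ can exceed $n$. When $\lambda(n-2)>n$ the potential produces $|x|^{n-\lambda(n-2)}(\log\frac5{|x|})^{\lambda+c}$, which is $O(|x|^{-(n-2)})$ precisely when $n-\lambda(n-2)\ge-(n-2)$, i.e. $\lambda\le\frac{2n-2}{n-2}$ — exactly the stated range. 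At the borderline one absorbs the logarithmic factors. The case $\lambda(n-2)\le n$ is even easier since the potential is then bounded or mildly singular. Thus one iteration of the potential estimate already yields \eqref{eq1.13}; no further bootstrap is needed.

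For the second half ($\lambda>\frac{2n-2}{n-2}$) I would run the same representation but track the improvement more carefully: even a crude first bound $u=O(|x|^{-(n-2)}\log\frac5{|x|})$ fed through $h=O(u^\lambda)$ and the $m$-fold potential returns, because $\lambda>\frac{2n-2}{n-2}$, something of order $|x|^{n-\lambda(n-2)}(\log)^{\text{power}}$ with $n-\lambda(n-2)<-(n-2)$, i.e. a genuinely worse power — so a naive bootstrap diverges and one must instead argue that the \emph{harmonic-type leading term} dominates. The point is that $-\Delta^{m-1}u$ is a nonnegative superharmonic function (it is $-\Delta$ of something whose $-\Delta$ is $\ge0$, iterated), so by the $m=1$ theory (Theorem~\ref{thm1.2}, applied to the superharmonic function $w:=-\Delta^{m-1}u\ge 0$ solving $0\le-\Delta w=h\le f(u)$ with the superpolynomial... no, $\lambda$ is finite) one gets $w=O(|x|^{-(n-2)})$ hence, integrating the chain $-\Delta(-\Delta^{j}u)=-\Delta^{j+1}u$ upward with these bounds, each successive potential picks up exactly one factor $|x|^{2-n}\cdot$(something) and, crucially, at the top level $u$ inherits the bound $O(|x|^{-(n-2)})$ times a single $\log\frac5{|x|}$, with the strict $o$ coming from the fact that the potential of an $L^1_{loc}$-near-origin function is $o$ of the fundamental solution. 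The main obstacle, and where I expect to spend the most effort, is making this chain rigorous: showing that despite $f(u)$ being as large as $u^{n/(n-2)}$ or bigger, the \emph{specific} structure $0\le-\Delta^m u$ forces the intermediate Laplacians $-\Delta^j u$ to be small enough (via repeated use of sub-/super-harmonicity and Theorem~\ref{thm1.1}-type bounds on bounded annuli) that the top potential does not blow up faster than $\Gamma$, and then upgrading $O$ to $o$ via a dominated-convergence argument on the potential near $0$. The borderline-logarithm arithmetic in case (v) is the delicate point that distinguishes this from case (iv).
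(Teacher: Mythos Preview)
Your approach has two genuine gaps that prevent it from closing.

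First, the ``base estimate'' $u(x)=O\bigl(|x|^{-(n-2)}\log\frac5{|x|}\bigr)$ is not available a priori; it is essentially the conclusion \eqref{eq1.14} (with $o$ weakened to $O$) and cannot be assumed as a starting point. The representation formula (Lemma~\ref{lem2.1}) gives only the weighted integrability $\int_{|y|<1}|y|^{n-2}(-\Delta^m u)\,dy<\infty$, and this is not enough to bound the potential term $N(x)$ in \eqref{eq2.11} pointwise: the kernel $\log(|x|/|x-y|)$ is unbounded as $y\to x$, and $-\Delta^m u$ could concentrate near $x$. Second, your chain argument for the case $\lambda>\frac{2n-2}{n-2}$ assumes that $w=-\Delta^{m-1}u$ is nonnegative so that Theorem~\ref{thm1.2} applies to it. But from $-\Delta^m u\ge 0$ you only know that $w$ is \emph{superharmonic}, not that $w\ge 0$; the intermediate Laplacians $-\Delta^j u$ for $1\le j\le m-1$ carry no sign in general, so the induction breaks immediately.

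The paper bypasses both obstacles by arguing by contradiction and working with the \emph{source term} rather than with $u$. Assuming the desired bound fails along a sequence $x_j\to 0$, one rescales $f:=-\Delta^m u$ around $x_j$ via \eqref{eq2.15} to obtain functions $f_j$ on $B_2(0)$; the weighted integrability from Lemma~\ref{lem2.1} forces $\int_{B_2}f_j\to 0$ (this is \eqref{eq2.16}). Under the rescaling, \eqref{eq1.1} together with \eqref{eq2.17}--\eqref{eq2.18} becomes a nonlinear integral inequality of the type $f_j(\xi)\le C\bigl(\text{Riesz/log-potential of }f_j\bigr)^{\lambda}$ on $B_1$ (cf.\ \eqref{eq3.33}--\eqref{eq3.34}). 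Iterating Riesz-potential estimates together with Lemma~\ref{lem2.5} for the logarithmic kernel bootstraps the $L^1$-smallness of $f_j$ up to $L^\infty$-boundedness (resp.\ $L^\infty$-smallness) on a fixed smaller ball, which contradicts the assumed blowup of $u$ at $x_j$ via the same estimates \eqref{eq2.17}--\eqref{eq2.18}. The missing idea in your proposal is precisely this rescale-and-bootstrap on $-\Delta^m u$, which substitutes for the unavailable a priori bound on $u$.
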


By Remark~\ref{rem1.1}, the bound \eqref{eq1.13} is optimal and by the following theorem so is the bound \eqref{eq1.14}.

\begin{thm}\label{thm1.8}
Suppose $m$ and $n$ are integers satisfying (v) and $\lambda$ is a constant satisfying
\begin{equation}\label{eq1.15}
 \lambda > \frac{2n-2}{n-2}.
\end{equation}
Let $\varphi\colon (0,1)\to (0,1)$ be a continuous function satisfying $\lim\limits_{r\to0^+} \varphi(r) = 0$. Then there exists a $C^\infty$ positive solution $u(x)$ of
\begin{equation}\label{eq1.16}
 0 \le -\Delta^m u \le u^\lambda \quad \text{in}\quad {\bb R}^n\backslash\{0\}
\end{equation}
such that
\begin{equation}\label{eq1.17}
 u(x) \ne O\left(\varphi(|x|)|x|^{-(n-2)} \log \frac5{|x|}\right)\quad \text{as}\quad x\to 0.
\end{equation}
\end{thm}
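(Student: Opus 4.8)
\emph{Proof proposal.} The plan is to produce, for the given $\varphi$, a $C^\infty$ positive solution $u$ of \eqref{eq1.16} together with a sequence $x_j\to 0$ with $u(x_j)\big/\bigl(\varphi(|x_j|)|x_j|^{-(n-2)}\log\tfrac5{|x_j|}\bigr)\to\infty$, which is \eqref{eq1.17}. Since replacing $\varphi$ by a pointwise larger function only strengthens the conclusion, I would first assume, with no loss of generality, that $\varphi(r)\log\tfrac5r\ge 1$ for $r\in(0,1)$ (replace $\varphi$ by $\max\{\varphi(r),1/\log\tfrac5r\}$, which still maps $(0,1)$ into $(0,1)$ and tends to $0$); thus the quantity to be beaten is at least $|x|^{-(n-2)}$, the order of the $m$-harmonic comparison function $\Gamma$ of Remark~\ref{rem1.1}.

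The construction parallels the one behind Theorem~\ref{thm1.5} for the range $2m<n$, adapted to the borderline dimension $n=2m$. Fix a rapidly decreasing, well-separated sequence $r_j\downarrow 0$, put $x_j=r_je$ for a fixed unit vector $e$ and $A_j=\{\,r_j/2<|x|<2r_j\,\}$, and pick $\vp_j\to 0^+$ with $\vp_j/\varphi(r_j)\to\infty$ (e.g. $\vp_j=\sqrt{\varphi(r_j)}$, so $\vp_j\ge 1/\sqrt{\log\tfrac5{r_j}}$). On $A_j$ the solution solves $-\Delta^m u=g_j$ for a $C^\infty$ nonnegative bump $g_j$ centred at $x_j$ at a scale $\sigma_j\ll r_j$, vanishing to infinite order on $\partial A_j$; concretely $u=\Gamma+G_{A_j}\!*g_j+h_j$ on $A_j$, with $G_{A_j}$ the polyharmonic Dirichlet Green function of $A_j$ and $h_j$ a small $m$-harmonic correction, while in the gaps between consecutive $A_j$'s one takes $u$ to be $m$-harmonic (so $-\Delta^m u\equiv0$ there). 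One ties the parameters together by imposing $G_{A_j}\!*g_j(x_j)\approx M_j\log\tfrac1{\sigma_j}=\vp_jr_j^{-(n-2)}\log\tfrac5{r_j}$, where $M_j=\int g_j$; the crucial point --- and precisely where $\lambda>\frac{2n-2}{n-2}$ enters --- is that this forces $\sigma_j$ to lie strictly between $r_j^{(n-2)(\lambda-1)/n}$ and $r_j$, a nonempty range exactly because $(n-2)(\lambda-1)>n$, and that for such $\sigma_j$ the bump $g_j$, of height of order $M_j\sigma_j^{-n}$ (which is $\lesssim r_j^{-(2n-2)}$ up to a slowly varying factor), stays below $u^\lambda$ on its support, since there $u\gtrsim M_j\log\tfrac1{\sigma_j}$ and $(n-2)\lambda>2n-2$. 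Away from the bumps $g_j=0$ and $u$ is a strictly sub-unit perturbation of $\Gamma$ --- here one uses $M_j\asymp\vp_jr_j^{-(n-2)}\to\infty$ with $\vp_j\to0$, so the possibly sign-indefinite off-diagonal contribution $M_jG_{A_j}(\cdot,x_j)=O(M_j)$ is dominated by $\Gamma$ --- hence $u>0$ and $-\Delta^m u=0\le u^\lambda$. Choosing the $h_j$ and the gap functions so that $u\in C^{2m-1}$ across every interface $|x|=r_j$, the identity $-\Delta^m u=g:=\sum_jg_j$ holds on all of $\mathbb{R}^n\setminus\{0\}$ with $g\in C^\infty$ nonnegative, and hypoellipticity of $\Delta^m$ upgrades $u$ to $C^\infty$ there; finally $\vp_j/\varphi(r_j)\to\infty$ gives \eqref{eq1.17}.

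I expect the main obstacle to be the logarithmic nature of the fundamental solution of $\Delta^m$ when $n=2m$ --- the very feature responsible for the extra factor $\log\tfrac5{|x|}$ in \eqref{eq1.14}. When $2m<n$ the polyharmonic potential of a nonnegative bump decays, so one may superpose potentials freely; when $n=2m$ it grows like $\log\tfrac1{|x|}$, the total mass $\sum_jM_j$ is \emph{forced} to be infinite (each $M_j\gtrsim r_j^{-(n-2)}(\log\tfrac1{r_j})^{-\lambda/(\lambda-1)}$), and a naive global superposition diverges everywhere; one must therefore work locally, annulus by annulus. This makes both the requirement $-\Delta^m u\ge0$ --- which, by a one-line integration by parts, rules out any nonzero compactly supported piece with $-\Delta^m\ge0$, so the gaps must be filled by honest $m$-harmonic functions rather than by cutting off --- and the $C^\infty$ gluing across the interfaces into real points to verify. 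A further delicate feature, absent for $2m<n$, is that there is essentially no slack: the extremal growth $|x|^{-(n-2)}\log\tfrac5{|x|}$ exceeds $\Gamma$ only by a logarithmic factor, so the bump profiles, the corrections $h_j$, and the separation of the scales $r_j$ must all be tuned jointly, and it is exactly here that the normalization $\varphi(r)\log\tfrac5r\ge1$ is used, to guarantee the admissible range for $\sigma_j$ is nonempty for every admissible $\varphi$. Once the parameters are fixed, the remaining verifications --- positivity of $u$, the two inequalities in \eqref{eq1.16}, $C^\infty_{\mathrm{loc}}$ convergence on $\mathbb{R}^n\setminus\{0\}$, and $\limsup_{x\to0}u(x)\big/\bigl(\varphi(|x|)|x|^{-(n-2)}\log\tfrac5{|x|}\bigr)\ge\limsup_j\vp_j/\varphi(r_j)=\infty$ --- are routine.
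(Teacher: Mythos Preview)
Your overall strategy---place nonnegative $C^\infty$ bumps $g_j$ of mass $M_j$ at points $x_j\to 0$, choose an inner scale $\sigma_j\ll|x_j|$ so that the resulting potential at $x_j$ is of order $\vp_j|x_j|^{-(n-2)}\log\tfrac{5}{|x_j|}$ with $\vp_j/\varphi(|x_j|)\to\infty$, and use $(n-2)(\lambda-1)>n$ to fit $-\Delta^m u\le u^\lambda$ on the support of $g_j$---is exactly the skeleton of the paper's proof. The role of the hypothesis \eqref{eq1.15} is identified correctly.

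Where you diverge from the paper is in the execution, and there the proposal has a real gap. You argue that because $\sum_j M_j=\infty$ a global superposition must diverge, and you therefore build $u$ annulus by annulus as $\Gamma+G_{A_j}*g_j+h_j$ on $A_j$, $m$-harmonic in the gaps, with $h_j$ and the gap functions ``chosen so that $u\in C^{2m-1}$ across every interface.'' But that choice is not free: on each boundary sphere you are asking to match $2m$ normal derivatives from each side, while an $m$-harmonic function on an annulus has only $2m$ radial degrees of freedom per spherical-harmonic mode. The Dirichlet Green function $G_{A_j}$ kills the first $m$ normal derivatives of $G_{A_j}*g_j$ on $\partial A_j$ but not the next $m$, so you are left with an overdetermined transmission problem that you have not shown is solvable; your sentence ``the remaining verifications \ldots\ are routine'' hides precisely this. (There is also the side issue that polyharmonic Dirichlet Green functions on annuli need not be sign-definite when $m\ge 2$, which complicates your positivity argument for $u$ away from the bumps.)

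The paper avoids all of this by observing that the divergence you fear is an artifact of using the bare fundamental solution $\Phi$. Replacing $\Phi(x-y)$ by its Taylor-corrected version
\[
\Psi(x,y)=\Phi(x-y)-\sum_{|\alpha|\le 2m-3}\frac{(-y)^\alpha}{\alpha!}D^\alpha\Phi(x),
\]
one has $|\Psi(x,y)|\le C|y|^{n-2}|x|^{2-n}$ for $|y|<|x|/2$, so the relevant convergence condition for the global potential $N(x)=\int -\Psi(x,y)f(y)\,dy$ is $\int|y|^{n-2}f(y)\,dy<\infty$, i.e.\ $\sum_j\vp_j<\infty$, not $\sum_j M_j<\infty$. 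This is the content of Lemmas~\ref{lem2.2} and \ref{lem2.4}. With $u=N+C|x|^{-(n-2)}$ one gets a genuinely global $C^\infty$ solution of $-\Delta^m u=f$ on ${\bb R}^n\setminus\{0\}$ with no gluing whatsoever; positivity and the lower bound \eqref{eq2.30} on $B_{r_j}(x_j)$ follow directly from the kernel estimates. The parameter tuning you describe then reduces to the explicit inequality in Case~2 of Remark~\ref{rem2.1}, which the paper solves by the choice \eqref{eq3.48}. In short: your annulus-and-glue construction is both unnecessary and, as written, not justified; the missing idea is the Taylor-subtracted kernel $\Psi$, which makes a single global superposition converge.
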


By the following theorem $u(x)$ may satisfy a pointwise a priori bound even when $f(t)$ grows, as $t\to\infty$, faster than any power of $t$.

\begin{thm}\label{thm1.9}
Let $u(x)$ be a $C^{2m}$ nonnegative solution of \eqref{eq1.1} where the integers $m$ and $n$ satisfy (v) and $f\colon [0,\infty)\to [0,\infty)$ is a continuous function satisfying
\[
 \log(1+f(t)) = O(t^\lambda)\quad \text{as}\quad t\to\infty
\]
where
\begin{equation}\label{eq1.18}
 0 < \lambda < 1.
\end{equation}
Then
\begin{equation}\label{eq1.19}
 u(x) = o\left(|x|^{\frac{-(n-2)}{1-\lambda}}\right)\quad \text{as}\quad x\to 0.
\end{equation}
\end{thm}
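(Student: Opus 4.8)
I would prove the bound by an iteration whose convergence is driven by the strict inequality $\lambda<1$, fed by a representation of $u$ near the singularity that exploits the two sign conditions $u\ge 0$ and $-\Delta^m u\ge 0$. After a translation and a dilation I may assume $u$ is a $C^{2m}$ nonnegative function on $B_1(0)\setminus\{0\}$ satisfying $0\le -\Delta^m u\le f(u)$ there and bounded on $\{1/2\le|x|\le 1\}$; replacing $f$ by its nondecreasing majorant, I may also assume $f$ is nondecreasing, still with $\log(1+f(t))\le C(1+t^\lambda)$.

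The first ingredient is a representation lemma of the type established in the papers cited in the introduction. Because $u\ge 0$ and $-\Delta^m u\ge 0$ in $B_1\setminus\{0\}$, the quantity $-\Delta^m u$ is integrable near $0$ against the weight $\Gamma(|\cdot|)^{-1}$, i.e.
\[
 \int_{B_{1/2}}(-\Delta^m u)(y)\,\Gamma(|y|)^{-1}\,dy<\infty ,
\]
and for $0<|x|<1/2$ there is a decomposition
\[
 0\le u(x)\le C\,\Gamma(|x|)+N(x),\qquad N(x):=\int_{B_{1/2}}K(x,y)\,(-\Delta^m u)(y)\,dy ,
\]
where $K(x,y)$ is the fundamental solution $\Phi_m$ of $\Delta^m$ (which, since $n=2m$, is of logarithmic type, $|\Phi_m(z)|\lesssim 1+|\log|z||$) with finitely many of its $y$-Taylor terms at $y=0$ subtracted, so that $K(x,y)=O(|y|^{n-2})$ as $y\to 0$ and the integral converges by the weighted integrability; the subtracted polyharmonic pieces and the boundary contribution on $\{|x|=1/2\}$ get absorbed into the $C\Gamma(|x|)$ term. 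The point of this step is that, no matter how fast $f$ grows, $-\Delta^m u$ is controlled in a weighted integral sense purely by the sign hypotheses, while the nonlinearity enters only through the size of $N$.

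The second ingredient is the iteration. Suppose we already know $u(x)=O(|x|^{-\alpha})$ as $x\to 0$ for some $\alpha\ge n-2$. Then $-\Delta^m u\le f(u)\le\exp\bigl(C|x|^{-\lambda\alpha}\bigr)$ near $0$, while the weighted $L^1$ bound says that on each dyadic shell $|y|\sim r$ the mass of $-\Delta^m u$ is $O(r^{-(n-2)})$. Estimating $N(x)$ shell by shell, the shells $|y|\ll|x|$ and $|y|\gg|x|$ contribute $o(|x|^{-(n-2)})$ or $O\bigl(\log\tfrac1{|x|}\bigr)$ by the structure of $K$ and the absolute continuity of the weighted integral, while the dominant shell $|y|\sim|x|$ is handled by balancing the logarithmic kernel against the mass constraint $O(|x|^{-(n-2)})$ and the pointwise ceiling $\exp(C|x|^{-\lambda\alpha})$: the extremal configuration is $-\Delta^m u$ sitting at its ceiling on a ball of radius $\delta$ with $\delta^{\,n}\exp(C|x|^{-\lambda\alpha})\sim|x|^{-(n-2)}$, so that $\log\tfrac1\delta\sim|x|^{-\lambda\alpha}$ and the shell contributes $\lesssim|x|^{-(n-2)}\log\tfrac1\delta\sim|x|^{-((n-2)+\lambda\alpha)}$. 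This yields the improved bound $u(x)=O\bigl(|x|^{-((n-2)+\lambda\alpha)}\bigr)$. Since $0<\lambda<1$ the affine map $\alpha\mapsto(n-2)+\lambda\alpha$ is a contraction with fixed point $\alpha^\ast=(n-2)/(1-\lambda)$, so iterating from an initial exponent $\alpha_0$ — produced first by a cruder version of the same scheme, or by a doubling argument available because $\log(1+f(t))=O(t^\lambda)$ forces $f$ to grow slower than every exponential $e^{ct}$ — gives $\alpha_k=\alpha^\ast+\lambda^k(\alpha_0-\alpha^\ast)\to\alpha^\ast$, hence $u(x)=O(|x|^{-\alpha})$ for every $\alpha>\alpha^\ast$. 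A final sharpening of the dominant-shell estimate at $\alpha=\alpha^\ast$, using that its coefficient equals $\int_{|y|\sim|x|}(-\Delta^m u)(y)\Gamma(|y|)^{-1}\,dy\to 0$ by absolute continuity, upgrades this to $u(x)=o\bigl(|x|^{-(n-2)/(1-\lambda)}\bigr)$.

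The main obstacle is the representation lemma when $m\ge 3$. For $m=1$ the weighted (in fact unweighted) integrability of $-\Delta^m u$ is essentially the monotonicity of the flux $\int_{|x|=r}\partial_r u\,dS$ of a nonnegative superharmonic function, and the decomposition is the classical Bôcher/Brezis–Véron representation. For $m\ge 3$, however, the intermediate iterates $(-\Delta)^j u$ with $0<j<m$ have no definite sign, so one cannot reduce to the $m=1$ reasoning; instead one must work with the polyharmonic fundamental solution directly, combine the signs of $u$ and $-\Delta^m u$ with the Bôcher-type structure of polyharmonic functions near an isolated singularity, and show that the non-sign-definite polyharmonic terms contribute no worse than $\Gamma$. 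A further delicate point is that $n=2m$ is the critical dimension for $\Delta^m$, so the shell estimates must carry the correct logarithmic bookkeeping; this is exactly what produces the factor $1-\lambda$ in the denominator of the exponent, rather than the rational expressions in $n$ and $m$ occurring in case (iv).
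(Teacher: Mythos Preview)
Your outline correctly identifies the representation formula (this is the paper's Lemmas~2.1--2.2, specialized to $2m=n$) and the heuristic that the map $\alpha\mapsto(n-2)+\lambda\alpha$ contracts to the target exponent $(n-2)/(1-\lambda)$. But there is a real gap at the base of the iteration. To run your shell estimate even once you must already know $u(x)=O(|x|^{-\alpha_0})$ for some finite $\alpha_0$: the ``extremal configuration'' bound on the near-shell contribution to $N(x)$ uses the pointwise ceiling $-\Delta^m u\le\exp(Cu^\lambda)$, which is vacuous until $u$ is controlled on $\{|y-x|<|x|/2\}$. Neither of your suggested starters works. A ``cruder version of the same scheme'' faces the identical bootstrap problem, and the observation that $f$ grows slower than every exponential does not by itself produce a polynomial a priori bound---for $m\ge3$ there is no Harnack inequality available to run a doubling argument, and the intermediate iterates $(-\Delta)^ju$ carry no sign. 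Relatedly, even granting an $\alpha_0$, your shell step bounds $u(x)$ in terms of $\sup_{|y|\sim|x|}u(y)$ rather than $u(x)$ itself, so the recursion couples neighbouring dyadic shells and closing it needs an argument you have not supplied.

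The paper's proof (Theorem~3.3) sidesteps both issues by working directly at the critical scale rather than iterating toward it. Arguing by contradiction along a sequence $x_j$, it fixes $r_j$ via $\log(1/r_j)=|x_j|^{-(n-2)\lambda/(1-\lambda)}$ and rescales; the rescaled densities $f_j$ then satisfy $f_j\lesssim\exp\bigl((\text{log-potential of }f_j)^\lambda+M_j^\lambda\bigr)$ with $\int f_j\to0$. The crucial device---absent from your sketch---is that $t\mapsto\exp(t^\lambda)$ is \emph{concave} for large $t$, so Jensen's inequality against the probability measure $f_j/\int f_j$ controls $\int_{\Omega_j}f_j^2$ on the set $\Omega_j$ where the potential is large, and this is shown to vanish. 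Off $\Omega_j$ the density is pointwise bounded by $e^{2M_j^\lambda}$, and a single mass-rearrangement argument (close in spirit to your extremal step, but applied once at the correct scale rather than iteratively) yields the contradiction. The Jensen/concavity step is precisely what lets the argument close without any prior polynomial bound on $u$.
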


By the following theorem, the estimate \eqref{eq1.19} in Theorem~\ref{thm1.9} is optimal.

\begin{thm}\label{thm1.10}
Suppose $m$ and $n$ are integers satisfying (v) and $\lambda$ is a constant satisfying \eqref{eq1.18}. Let $\varphi\colon (0,1)\to (0,1)$ be a continuous function satisfying $\lim\limits_{r\to 0^+} \varphi(r) = 0$. Then there exists a $C^\infty$ positive solution $u(x)$ of
\begin{equation}\label{eq1.20}
 0 \le -\Delta^m u \le e^{u^\lambda}\quad \text{in}\quad {\bb R}^n\backslash\{0\}
\end{equation}
such that
\begin{equation}\label{eq1.21}
 u(x)\ne O\left(\varphi(|x|) |x|^{\frac{-(n-2)}{1-\lambda}}\right)\quad \text{as}\quad x\to 0.
\end{equation}
\end{thm}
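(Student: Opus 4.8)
The plan is to construct, for the given $\varphi$ with $\varphi(r)\to 0$ as $r\to 0^+$, a positive $C^\infty$ solution $u$ of \eqref{eq1.20} that along a suitable sequence $x_k\to 0$ is much larger than $\varphi(|x_k|)|x_k|^{-(n-2)/(1-\lambda)}$. Since we only need the differential \emph{inequality} $0\le-\Delta^m u\le e^{u^\lambda}$, the natural strategy is to superpose bumps: pick points $x_k\to 0$ with $|x_k|$ decreasing rapidly and disjoint small balls $B_{\rho_k}(x_k)$, and set
\[
 u(x) = c_0\,\Gamma(|x|) + \sum_{k=1}^\infty M_k\,\psi\!\left(\frac{x-x_k}{\rho_k}\right),
\]
where $\psi\ge 0$ is a fixed radial $C^\infty_c$ bump supported in the unit ball with $\psi\equiv 1$ near the origin, $c_0>0$, and $M_k,\rho_k$ are to be chosen. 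The leading term $c_0\Gamma$ is there only to keep $u$ positive and bounded below away from $0$; the bumps carry the blow-up. Because the supports are disjoint, $-\Delta^m u = c_0\cdot 0 + \sum_k M_k\rho_k^{-2m}(-\Delta^m\psi)((x-x_k)/\rho_k)$, so $|{-\Delta^m u}(x)|\le C M_k\rho_k^{-2m}$ on $B_{\rho_k}(x_k)$ and $-\Delta^m u=0$ elsewhere. The nonnegativity $-\Delta^m u\ge0$ is the one part of \eqref{eq1.20} that superposition does \emph{not} automatically give, since $-\Delta^m\psi$ changes sign; I address this below.

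The size constraint is what forces the exponent $-(n-2)/(1-\lambda)$. On $B_{\rho_k}(x_k)$ we have $u\le C(\Gamma(|x_k|)+M_k)$, so it suffices to have
\[
 C M_k\rho_k^{-2m} \le \exp\!\big((C(\Gamma(|x_k|)+M_k))^\lambda\big),
\]
and to beat the target we want $M_k\gg \varphi(|x_k|)\,|x_k|^{-(n-2)/(1-\lambda)}$. Choose $M_k := |x_k|^{-(n-2)/(1-\lambda)}$; then $M_k^\lambda = |x_k|^{-\lambda(n-2)/(1-\lambda)}$ and $M_k = |x_k|^{-(n-2)}\cdot|x_k|^{-\lambda(n-2)/(1-\lambda)} = \Gamma(|x_k|)\,M_k^\lambda$ (for $n\ge 3$; the case $n=2$ does not occur here since (v) forces $n=2m\ge 6$). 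Hence $e^{cM_k^\lambda}$ is a positive power of $1/|x_k|$ raised to a large exponent, which dominates any fixed negative power $\rho_k^{-2m}$ as soon as $\rho_k$ is, say, a sufficiently large power of $|x_k|$ — so the inequality $CM_k\rho_k^{-2m}\le e^{cM_k^\lambda}$ holds for all large $k$ with room to spare, and simultaneously $M_k/(\varphi(|x_k|)|x_k|^{-(n-2)/(1-\lambda)}) = 1/\varphi(|x_k|)\to\infty$, giving \eqref{eq1.21} at $x=x_k$. One also checks the balls are disjoint by taking $|x_{k+1}|\le |x_k|/4$ and $\rho_k\le |x_k|/4$, and that $u\in C^\infty(\bb R^n\setminus\{0\})$ with $u>0$ everywhere (the $c_0\Gamma$ term handles points between the bumps and near $0$ outside the bumps).

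The main obstacle, as noted, is arranging $-\Delta^m u\ge 0$. A clean fix is to not use a single bump but to use the fundamental-solution trick locally: replace $M_k\psi((x-x_k)/\rho_k)$ by $M_k\eta_k(x)$ where $\eta_k$ is chosen so that $-\Delta^m\eta_k \ge 0$ on its support and $\eta_k$ has a definite-size ``spike'' at $x_k$. For instance one can let $w_k(x) = |x-y_k|^{-(n-2)}$ for a point $y_k$ very close to $x_k$ (so $-\Delta^m w_k = 0$ away from $y_k$), truncated smoothly; near the truncation one only needs $-\Delta^m(\chi w_k)\ge 0$, which can be secured by composing with a convex increasing function, or by instead building $u$ on an annulus $\{|x_k|/2<|x|<2|x_k|\}$ as a radial function $v_k(|x|)$ solving $-\Delta^m v_k = g_k \ge 0$ with $g_k$ a nonnegative radial bump of height $\sim M_k\rho_k^{-2m}$ and then extending by a harmonic polyharmonic function to keep $v_k$ positive and $C^\infty$ — the radial ODE $(-\Delta^m$ acting on radial functions$)$ is an explicit Euler-type operator, so one can prescribe $g_k$ and read off $v_k$, matching $v_k$ and its derivatives to $c_0\Gamma$ at the two ends of the annulus. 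This matching, and verifying that the resulting piecewise-defined $u$ is genuinely $C^{2m}$ (indeed $C^\infty$ after mollifying $g_k$) and positive, is the technical heart of the argument; the rest is the elementary chain of inequalities above. I would carry it out by first fixing the sequence $|x_k|$ and the annuli, then solving the radial problem on each annulus with a free nonnegative source of the right height, then checking the exponential bound $0\le -\Delta^m u = g_k \le e^{u^\lambda}$ holds because $u\ge c M_k$ where $g_k$ is largest while $g_k\le CM_k\rho_k^{-2m}\ll e^{c M_k^\lambda}$, and finally reading off \eqref{eq1.21}.
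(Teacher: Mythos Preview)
Your overall plan---localize near a rapidly decreasing sequence $x_j\to 0$ and build a $u$ that is large at $x_j$ while keeping $-\Delta^m u$ controlled---matches the paper's, and your scaling heuristic that the right height is of order $|x_j|^{-(n-2)/(1-\lambda)}$ is correct. The genuine gap is exactly the one you flag: securing $-\Delta^m u\ge 0$. Your proposed fixes do not close it. Composing a truncated fundamental solution with a convex increasing function does not, for $m\ge 3$, give a sign on $-\Delta^m$; that argument works for $-\Delta$, not for polyharmonic operators. The radial matching idea runs into the same issue: on each annulus you would need a radial $v_k$ with $-\Delta^m v_k\ge 0$, $v_k>0$, $v_k(x_j)\sim M_k$, and $2m$ matching conditions at each endpoint to a fixed multiple of $|x|^{-(n-2)}$; there is no reason the resulting $v_k$ stays nonnegative or that $-\Delta^m v_k\ge 0$ persists through the gluing, and you have not indicated how to force this.

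The paper's construction resolves the sign problem by reversing the order: instead of putting bumps into $u$, put nonnegative bumps into the \emph{source}. That is, set $f=\sum_j M_j\varphi_j$ with $\varphi_j\ge 0$ supported in $B_{r_j}(x_j)$, and define $u$ as the Riesz-type potential $N$ of $f$ (via the error kernel $\Psi$ of Lemma~\ref{lem2.2}) plus a large multiple of $|x|^{-(n-2)}$; then $-\Delta^m u=f\ge 0$ is automatic and $u>0$ everywhere. The lower bound on $u$ at $x_j$ that you need comes not from $u$ containing an explicit bump of height $M_k$, but from the potential estimate \eqref{eq2.11}: when $2m=n$ one gets $u\ge \dfrac{A\varepsilon_j}{|x_j|^{n-2}}\log\dfrac{|x_j|}{r_j}$ on $B_{r_j}(x_j)$. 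The paper then chooses $\psi(r)=\max\{\varphi(r)^{(1-\lambda)/2},r^{(n-2)/2}\}$ (so that $\varepsilon_j=\psi(|x_j|)$ is summable, making $N$ well defined) and $r_j$ via $\log(|x_j|/r_j)=\big[(2n)^{-1}(A\psi(|x_j|)|x_j|^{-(n-2)})^\lambda\big]^{1/(1-\lambda)}$; a short computation then gives both $-\Delta^m u\le e^{u^\lambda}$ and $u(x_j)\ge C\sqrt{\varphi(|x_j|)}\,|x_j|^{-(n-2)/(1-\lambda)}$. The missing idea in your attempt is precisely this inversion: prescribe $-\Delta^m u$ to be a nonnegative bump sum and read off the size of $u$ from potential estimates, rather than prescribing $u$ and fighting for the sign of $-\Delta^m u$.
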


With regard to Theorem~\ref{thm1.9}, it is natural to ask what happens when $\lambda\ge 1$. The answer, given by the following theorem, is that the solutions $u$ can be arbitrarily large as $x\to 0$.

\begin{thm}\label{thm1.11}
Suppose $m$ and $n$ are integers satisfying (v) and $\lambda\ge 1$ is a constant. Let $\varphi\colon (0,1)\to (0,\infty)$ be a continuous function satisfying $\lim\limits_{r\to 0^+} \varphi(r) = \infty$. Then there exists a $C^\infty$ positive solution of 
\begin{equation}\label{eq1.22}
 0\le -\Delta^mu \le e^{u^\lambda}\quad \text{in}\quad {\bb R}^n\backslash\{0\}
\end{equation}
such that
\begin{equation}\label{eq1.23}
 u(x) \ne O(\varphi(|x|))\quad \text{as}\quad x\to 0.
\end{equation}
\end{thm}

Theorems~\ref{thm1.3}--\ref{thm1.11} are ``nonradial''. By this we mean that if one requires the solutions $u(x)$ in Question~1 to be radial then, according to the following theorem, which follows immediately from \cite[Lemma~2.4]{GMT}, the complete answer to Question~1 is very different.

\begin{thm}\label{thm1.12}
Suppose $m\ge 1$ and $n\ge 2$ are integers and $f\colon [0,\infty) \to [0,\infty)$ is a continuous function. Let $u(x)$ be a $C^{2m}$ nonnegative $\underline{\text{radial}}$ solution of \eqref{eq1.1} or, more generally, of
\[
 -\Delta^mu \ge 0 \quad \text{in}\quad B_2(0)\backslash\{0\} \subset {\bb R}^n.
\]
 Then $u$ satisfies
\begin{equation}\label{eq1.24}
 u(x) = O(\Gamma(|x|))\quad \text{as}\quad x\to 0
\end{equation}
where $\Gamma$ is given by \eqref{eq1.3}.
\end{thm}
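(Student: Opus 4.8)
\; Since $u$ is radial, write $u(x)=v(r)$ with $r=|x|$. The operator $\Delta$ acting on radial functions is $Lv:=v''+\frac{n-1}{r}v'=r^{1-n}(r^{n-1}v')'$, so $-\Delta^m u\ge 0$ in $B_2(0)\setminus\{0\}$ is equivalent to $L^m v\le 0$ on $(0,2)$, and we are given $v\ge 0$. The engine of everything is the case $m=1$: if $Lv\le 0$ then $r^{n-1}v'(r)$ is nonincreasing, so $\ell:=\lim_{r\to 0^+}r^{n-1}v'(r)$ exists in $(-\infty,+\infty]$; if $\ell=+\infty$ then $v$ is eventually increasing, hence bounded near $0$, and if $\ell<+\infty$ then $r^{n-1}v'(r)$ is bounded on some $(0,r_0]$, so $|v'(r)|=O(r^{1-n})$ and integration gives $v(r)=O(\Gamma(r))$. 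Alongside this I would record the companion ``backwards elliptic estimate'': if $\phi$ is radial with $|L\phi(r)|\le Cr^{-\beta}$ near $0$, then integrating $(r^{n-1}\phi')'=r^{n-1}L\phi$ twice yields $\phi(r)=O(r^{2-\beta})$ when $\beta>n$, $\phi(r)=O\!\big(r^{2-n}\log\tfrac1r\big)$ when $\beta=n$, and $\phi(r)=O(\Gamma(r))$ when $\beta<n$.

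For general $m$ set $v_j:=L^jv=\Delta^j u$ (radially), so that $v_0=v\ge 0$, $v_m\le 0$, and $Lv_j=v_{j+1}$. The difficulty is that the intermediate iterates $v_1,\dots,v_{m-1}$ carry no sign. My plan is: (a) obtain a two-sided polynomial bound $|v_{m-1}(r)|=O(r^{-\beta_0})$; (b) feed it down the chain $v_{m-1}\to v_{m-2}\to\cdots\to v_0$ via the backwards estimate to get $u(x)=O(|x|^{-K})$ for a finite $K$; (c) bootstrap $K$ toward $n-2$. For the upper bound in (a): $Lv_{m-1}=v_m\le 0$, so $v_{m-1}$ is radially superharmonic and the $m=1$ dichotomy gives $v_{m-1}(r)\le C\Gamma(r)$. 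For the lower bound I would exploit $v_0\ge 0$ through a \emph{propagation of negativity}: if $v_{m-1}(r)\le -cr^{-\gamma}$ near $0$ with $\gamma>n$ then, since $\int_0 s^{n-1-\gamma}\,ds$ diverges, solving $Lv_{m-2}=v_{m-1}$ forces $r^{n-1}v_{m-2}'(r)\to+\infty$ with a definite rate, hence $v_{m-2}(r)\le -c'r^{-(\gamma-2)}$ near $0$; iterating down the chain (each step lowering the exponent by $2$, the last step at the critical exponent $n$ producing an extra $\log\tfrac1r$) forces $v_0(r)\to-\infty$, contradicting $v_0\ge 0$. This rules out $\gamma\ge n+2m-4$ and so, together with the superharmonicity of $v_{m-1}$, pins down $\beta_0$.

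The step I expect to be the main obstacle is \emph{sharpness}. The scheme above is lossy: each time an exponent crosses the critical value $n$ one loses a power of $\log\tfrac1r$, so the naive iteration lands at $u(x)=O\!\big(|x|^{2-n}\log\tfrac1{|x|}\big)$ rather than the optimal $u(x)=O(\Gamma(|x|))$. To remove the logarithm I would replace the crude integrations by an exact radial representation. In the Emden--Fowler variable $t=\log\tfrac1r$, with $W(t)=v(e^{-t})$, one has $\Delta^m u=e^{2mt}\,\mathcal L_m W$ for the constant-coefficient operator
\[
 \mathcal L_m=\prod_{j=0}^{m-1}(\partial_t+2j)\ \prod_{j=0}^{m-1}\big(\partial_t-(n-2-2j)\big),
\]
whose characteristic roots are $0,-2,\dots,-2(m-1)$ and $n-2,n-4,\dots,n-2m$; the only root producing growth as $t\to+\infty$ is $n-2$ (giving $e^{(n-2)t}=r^{2-n}=\Gamma$), with a single resonant $\log\tfrac1r$ mode occurring exactly in case (v), where $n-2m=0$. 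One writes $W=W_h+W_p$ with $\mathcal L_mW_h=0$ (so $W_h=O(e^{(n-2)t})$ automatically) and $W_p$ a Duhamel integral of $\psi:=\mathcal L_mW\le 0$, integrating modes that decay as $t\to+\infty$ from $+\infty$ and the others from a fixed $t_0$. The heart of the argument is then to show that $W\ge 0$ together with $\psi\le 0$ forces the coefficient of every contribution that would grow faster than $e^{(n-2)t}$ — these can only come from the $W_p$ term when $\psi$ is very negative, or from the resonant mode — to vanish; the qualitative boundedness steps above serve mainly to justify the a priori convergence of the Duhamel integrals.
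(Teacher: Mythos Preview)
The paper does not actually prove Theorem~1.12: immediately before its statement the authors write that it ``follows immediately from [GMT, Lemma~2.4]'', and no further argument is given. So your proposal is not competing with a proof in the paper but with an external citation, and your direct ODE analysis is necessarily a different route. That is fine, and the Emden--Fowler strategy you outline is the natural one for radial polyharmonic problems; the factorization of $\mathcal L_m$ and the identification of $e^{(n-2)t}$ as the dominant homogeneous mode are correct.

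Two remarks. First, a small slip: the doubling of the \emph{top} root $n-2$, which is what produces a logarithm in $\Gamma$, happens exactly when $n=2$, not when $2m=n$. In case~(v) the coincidence is at the root $0=n-2m$, which lies strictly below $n-2$ and therefore does not affect the leading bound; your sentence about ``a single resonant $\log\tfrac1r$ mode occurring exactly in case~(v)'' should be revised accordingly (and the $n=2$ case, where every root is doubled, handled on its own).

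Second, and more substantively, the step you flag as ``the heart of the argument'' is where the real input lies, and you have not said how you will carry it out. Your preliminary steps (a)--(c) give only $u(x)=O(|x|^{-K})$ for some $K$, and since the hypothesis imposes no upper bound on $-\Delta^m u$, this does not by itself control $\psi=\mathcal L_m W$ well enough to make all the Duhamel integrals converge. The clean way through is to first establish the weighted integrability
\[
\int_{|y|<1}|y|^{2m-2}\bigl(-\Delta^m u(y)\bigr)\,dy<\infty,
\]
which in your variables reads $\int^{\infty}e^{-(n-2)s}|\psi(s)|\,ds<\infty$; this is precisely Lemma~2.1 of the paper (valid without any radiality assumption) and is also what \cite{GMT} provides. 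Once you have it, each Duhamel term $e^{\mu_j t}\int_{t_0}^{t}e^{-\mu_j s}\psi(s)\,ds$ is $O(e^{(n-2)t})$ by the trivial estimate $e^{-\mu_j s}\le e^{(n-2-\mu_j)t}e^{-(n-2)s}$ for $s\le t$, so $W_p=O(e^{(n-2)t})$ and the conclusion follows immediately, with no need for the ``propagation of negativity'' detour. I would make this integrability the first step of your argument rather than the last.
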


By Remark~\ref{rem1.1}, the bound \eqref{eq1.24} for $u$ in Theorem~\ref{thm1.12} is optimal.

Theorems \ref{thm1.4} and \ref{thm1.7} are special cases of much more general results, in which, instead of obtaining pointwise upper bounds (when they exist) for $u$ where $u$ is a nonnegative solution of
\[
0 \le -\Delta^m u\le (u+1)^\lambda\quad \text{in}\quad B_2(0)\backslash\{0\},
\]
we obtain pointwise upper bounds (when they exist) for $|D^i u|$, $i=0,1,2,\ldots, 2m-1$, where $u$ is a nonnegative solution of
\[
0\le -\Delta^mu \le \sum^{2m-1}_{k=0} (|D^ku|+g_k(x))^{\lambda_k} \quad \text{in}\quad B_2(0)\backslash \{0\},
\]
where the functions $g_k(x)$ tend to infinity as $x\to 0$.
See Theorems~\ref{thm3.1} and \ref{thm3.2} in Section~\ref{sec3} for the precise statements of these more general results.

Also estimates for some derivatives of solutions of \eqref{eq1.1} when $m$ and $n$ satisfy (i) were obtained in \cite{GMT}.

We next consider the following analog of Question~1 when the singularity is at $\infty$.\medskip 

\n {\bf Question 2.} For which continuous functions $f\colon [0,\infty)\to [0,\infty)$ does there exist a continuous function $\varphi\colon  (1,\infty)\to (0,\infty)$ such that every $C^{2m}$ nonnegative solution $v(y)$ of
\begin{equation}\label{eq1.25}
0 \le -\Delta^m v\le f(v)\quad \text{in}\quad {\bb R}^n\backslash B_{1/2}(0)
\end{equation}
satisfies
\[
v(y) = O(\varphi(|y|))\quad \text{as}\quad |y|\to \infty
\]
and what is the optimal such $\varphi$ when one exists?
\medskip

The $m$-Kelvin transform of a function $u(x)$, 
$x\in\Omega\subset {\bb R}^n\backslash\{0\}$, is defined by
\begin{equation}\label{eq1.26}
v(y) = |x|^{n-2m} u(x)\quad \text{where}\quad x=y/|y|^2.
\end{equation}
By direct computation, $v(y)$ satisfies
\begin{equation}\label{eq1.27}
\Delta^mv(y) = |x|^{n+2m} \Delta^mu(x).
\end{equation}
See \cite[p.~221]{WX}  or \cite[p.~660]{X}.

As noted in \cite{GMT}, an immediate consequence of this fact and
Theorem \ref{thm1.1} is the following result concerning Question 2
when $m$ and $n$ satisfy (i).

\begin{thm}\label{thm1.13} 
Suppose $m\ge 1$ and $n\ge 2$ are integers satisfying (i) and $f\colon [0,\infty)\to [0,\infty)$ is a continuous function. Let $v(y)$ be a $C^{2m}$ nonnegative solution of \eqref{eq1.25} or, more generally, of
\[
 -\Delta^m v\ge 0\quad \text{in}\quad {\bb R}^n\backslash B_{1/2}(0).
\]
Then
\begin{equation}\label{eq1.28}
 v(y) = O(\Gamma_\infty(|y|))\quad \text{as}\quad |y|\to \infty,
\end{equation}
where 
\[
 \Gamma_{\infty}(r) = \begin{cases}
              r^{2m-2},&\text{if $n\ge 3$;}\\
r^{2m-2}\log 5r,&\text{if $n=2$.}
             \end{cases}
\]
\end{thm}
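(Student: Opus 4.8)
The plan is to deduce Theorem~\ref{thm1.13} directly from Theorem~\ref{thm1.1} by means of the $m$-Kelvin transform \eqref{eq1.26}--\eqref{eq1.27}. Given a $C^{2m}$ nonnegative function $v(y)$ satisfying $-\Delta^m v\ge 0$ in ${\bb R}^n\backslash B_{1/2}(0)$, I would introduce its $m$-Kelvin transform
\[
 u(x) = |x|^{-(n-2m)} v(x/|x|^2), \qquad x\in B_2(0)\backslash\{0\}.
\]
Since $y=x/|x|^2$ satisfies $|y|=1/|x|$, the condition $|y|>1/2$ is equivalent to $0<|x|<2$, so $u$ is well defined on $B_2(0)\backslash\{0\}$; moreover $x\mapsto x/|x|^2$ and $x\mapsto |x|^{-(n-2m)}$ are $C^\infty$ on ${\bb R}^n\backslash\{0\}$, so $u$ is $C^{2m}$ there.

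Next I would check that $u$ is a nonnegative solution of \eqref{eq1.3-2}. Nonnegativity is immediate from $v\ge 0$. For the differential inequality, the identity \eqref{eq1.27} with the roles of $u$ and $v$ interchanged gives
\[
 \Delta^m u(x) = |x|^{-(n+2m)} \Delta^m v(x/|x|^2),
\]
and since $|x|^{-(n+2m)}>0$ and $-\Delta^m v\ge 0$, it follows that $-\Delta^m u\ge 0$ in $B_2(0)\backslash\{0\}$. Because $m$ and $n$ satisfy (i), Theorem~\ref{thm1.1} applies and yields $u(x)=O(\Gamma(|x|))$ as $x\to 0$.

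It then remains to transform this estimate back. From the definition of $u$ and $|x|=1/|y|$ we have $v(y)=|x|^{n-2m}u(x)=|y|^{2m-n}u(x)$, while $\Gamma(|x|)=|x|^{-(n-2)}=|y|^{n-2}$ when $n\ge 3$ and $\Gamma(|x|)=\log(5/|x|)=\log 5|y|$ when $n=2$. Inserting the bound for $u$ and letting $|y|\to\infty$ (equivalently $x\to 0$) gives $v(y)=O(|y|^{2m-2})$ when $n\ge 3$ and $v(y)=O(|y|^{2m-2}\log 5|y|)$ when $n=2$, which is precisely \eqref{eq1.28}.

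There is no genuinely difficult step here: the substance is entirely contained in Theorem~\ref{thm1.1} and the elementary transformation law \eqref{eq1.27}. The only points deserving care are the bookkeeping of the powers of $|x|$ in the Kelvin transform and the observation that the inversion $x\mapsto x/|x|^2$ carries a neighborhood of $\infty$ in the $y$-variable onto a punctured neighborhood of $0$ in the $x$-variable while preserving $C^{2m}$ regularity, nonnegativity, and the sign of $\Delta^m$. Accordingly I would present the argument in just a few lines.
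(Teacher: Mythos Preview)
Your proposal is correct and is exactly the approach the paper takes: the paper does not give a separate proof of Theorem~\ref{thm1.13} but simply states that it is an immediate consequence of Theorem~\ref{thm1.1} together with the $m$-Kelvin transform identities \eqref{eq1.26}--\eqref{eq1.27}. Your bookkeeping of the powers of $|x|$ and the domains is accurate.
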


The estimate \eqref{eq1.28} is optimal because $\Delta^m\Gamma_\infty(|y|)=0$
in ${\bb R}^n\backslash\{0\}$.

Using the $m$-Kelvin transform and Theorems~\ref{thm3.1},
\ref{thm3.2}, and \ref{thm3.3} in Section~\ref{sec3} we will prove in
Section~\ref{sec4} the following three theorems dealing with
Question~2, the first of which deals with the case that $m$ and $n$
satisfy (iv).

\begin{thm}\label{thm1.14}
  Let $v(y)$ be a $C^{2m}$ nonnegative solution of \eqref{eq1.25}
  where the integers $m$ and $n$ satisfy $($iv$)$ and $f\colon
  [0,\infty)\to [0,\infty)$ is a continuous function satisfying
\[
f(t) = O(t^\sigma)\quad \text{as}\quad t\to\infty
\]
where 
\[
0 < \sigma < \frac{n}{n-2m}.
\]
Then
\begin{equation}\label{eq1.29}
v(y) = O(|y|^b) \quad \text{as}\quad |y|\to\infty
\end{equation}
where
\begin{equation}\label{eq1.29-2}
b=\frac{2m(n-2)}{n-\sigma(n-2m)}
 = 2m-2+ \frac{2(n-2m) (1+\sigma(m-1))}{n-\sigma(n-2m)}.
\end{equation}
\end{thm}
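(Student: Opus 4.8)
\medskip
\noindent\emph{Proof sketch.}
The plan is to transplant the exterior problem \eqref{eq1.25} to a punctured neighborhood of the origin by means of the $m$-Kelvin transform, to apply there the general weighted punctured-ball estimate of Section~\ref{sec3} (Theorem~\ref{thm3.1}, the case-(iv) companion of Theorem~\ref{thm1.4}), and finally to transform the resulting bound back.

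\emph{Step 1 (Kelvin transform).} Given a solution $v$ of \eqref{eq1.25}, set $u(x):=|y|^{n-2m}v(y)$ with $y=x/|x|^2$. Since the $m$-Kelvin transform \eqref{eq1.26} is, up to the conformal factor, an involution, $u$ is a $C^{2m}$ nonnegative function on $B_2(0)\backslash\{0\}$ and $v(y)=|x|^{n-2m}u(x)$ with $|x|=1/|y|$. By \eqref{eq1.27}, $\Delta^m u(x)=|x|^{-(n+2m)}\Delta^m v(y)$, so $0\le -\Delta^m v\le f(v)$ becomes
\[
0\le -\Delta^m u(x)\le |x|^{-(n+2m)}f\bigl(|x|^{n-2m}u(x)\bigr)\qquad\text{in } B_2(0)\backslash\{0\}.
\]
Because $f\ge 0$ is continuous and $f(t)=O(t^\sigma)$, we have $f(t)\le C(1+t^\sigma)$ for all $t\ge 0$; hence, writing $\gamma:=(n-2m)\sigma-(n+2m)$,
\[
0\le -\Delta^m u\le C|x|^{-(n+2m)}+C\,|x|^{\gamma}u^\sigma\qquad\text{in } B_2(0)\backslash\{0\}.
\]

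\emph{Step 2 (fitting Theorem~\ref{thm3.1}).} The hypothesis $0<\sigma<\frac{n}{n-2m}$ forces $(n-2m)\sigma<n$, so $\gamma<0$ and both weights above blow up as $x\to 0$. Fix any $\lambda_0>\max\{1,\sigma\}$. By Young's inequality $|x|^{\gamma}u^\sigma\le u^{\lambda_0}+|x|^{\gamma\lambda_0/(\lambda_0-\sigma)}$, and, enlarging the coefficient so as to absorb the $C|x|^{-(n+2m)}$ term as well, one obtains
\[
0\le -\Delta^m u\le \bigl(|u|+g_0(x)\bigr)^{\lambda_0}\qquad\text{in } B_2(0)\backslash\{0\},
\]
with $g_0(x)=c\bigl(|x|^{\gamma/(\lambda_0-\sigma)}+|x|^{-(n+2m)/\lambda_0}\bigr)\to\infty$ as $x\to 0$. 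This is precisely an inequality of the type covered by Theorem~\ref{thm3.1}.

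\emph{Step 3 (apply and transform back).} Theorem~\ref{thm3.1} now gives $u(x)=O(|x|^{-a})$ as $x\to 0$ for an explicit exponent $a$ depending on $\lambda_0$ and on the decay rate of $g_0$ (an optimal choice of $\lambda_0$ gives the sharpest $a$). Undoing the Kelvin transform, $v(y)=|x|^{n-2m}u(x)=O(|x|^{n-2m-a})=O(|y|^{a-(n-2m)})$ as $|y|\to\infty$. It then remains to check, by a direct computation with exponents, that $a-(n-2m)$ coincides with the number $b$ of \eqref{eq1.29-2}, and that $0<\sigma<\frac{n}{n-2m}$ is exactly the range in which $b=\frac{2m(n-2)}{n-\sigma(n-2m)}$ is finite and positive (as $\sigma$ increases to $\frac{n}{n-2m}$, $b$ increases to $\infty$, mirroring Theorem~\ref{thm1.4}).

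The computations in Steps~1 and~3 are routine; the real content is supplied by Theorem~\ref{thm3.1}. The one delicate point is Step~2: the transformed nonlinearity carries the singular weight $|x|^{\gamma}$ in front of $u^\sigma$, and this weight must be absorbed into the coefficient $g_0$ without pushing the effective growth exponent past criticality --- which is why one cannot merely quote Theorem~\ref{thm1.4} with $\lambda=\sigma$ but must invoke the weighted result of Section~\ref{sec3}.
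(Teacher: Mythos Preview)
Your strategy is exactly the paper's: Kelvin transform, absorb the singular weight via Young's inequality into a $g_0$ term so that Theorem~\ref{thm3.1} (with Remark~\ref{rem3.1}) applies, then transform back and check $a-(n-2m)=b$. So the skeleton is correct.

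There is one point where your description is imprecise and could lead you astray in the actual execution. In Theorem~\ref{thm3.1} the output exponent $a(0)$ is determined \emph{solely} by $\lambda_0$ through \eqref{eq3.3}; the function $g_0$ does not influence the conclusion but instead must satisfy the \emph{hypothesis} $g_0(x)=O(|x|^{-a(0)})$. Your $g_0$ has leading singularity $|x|^{\gamma/(\lambda_0-\sigma)}=|x|^{-E}$ with $E=\frac{n+2m-\sigma(n-2m)}{\lambda_0-\sigma}$, so you need $E\le a(0)$. As $\lambda_0$ increases toward $\frac{n}{n-2m}$, $a(0)\to\infty$ but $E$ decreases; as $\lambda_0\downarrow\sigma$, $E\to\infty$ but $a(0)$ stays bounded. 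The ``optimal'' $\lambda_0$ is therefore not a free minimization but the \emph{unique} solution of the fixed-point equation $E=a(0)$; this is precisely what the paper does (see the beginning of the proof of Theorem~\ref{thm4.1}). You must then verify that this $\lambda_0$ indeed lies in $(\sigma,\,n/(n-2m))$ (so that \eqref{eq3.2} holds) and that the resulting $a(0)-(n-2m)$ equals the $b$ of \eqref{eq1.29-2}; these are the ``routine'' computations you defer, and the paper carries them out explicitly in \eqref{eq4.3}--\eqref{eq4.6}.
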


The next two theorems deal with Question~2 when $m$ and $n$ satisfy (v).

\begin{thm}\label{thm1.15}Let $v(y)$ be a $C^{2m}$ nonnegative solution of \eqref{eq1.25} where the integers $m$ and $n$ satisfy $($v$)$ and $f\colon  [0,\infty)\to [0,\infty)$ is a continuous function satisfying
\[
f(t) = O(t^\sigma)\quad \text{as}\quad t\to\infty
\]
where $\sigma>0$. Then
\[
v(y) = o(|y|^{n-2} \log 5|y|) \quad \text{as}\quad |y|\to\infty.
\]
\end{thm}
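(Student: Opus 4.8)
\medskip
\noindent\textbf{Proof proposal.}
The plan is to use the $m$-Kelvin transform to move the singularity from $\infty$ to $0$ and then invoke the general version of Theorem~\ref{thm1.7} proved in Section~\ref{sec3}, namely Theorem~\ref{thm3.2}. Since $m$ and $n$ satisfy (v) we have $n=2m$, so the factor $|x|^{n-2m}$ in \eqref{eq1.26} equals $1$ and the $m$-Kelvin transform of $v$ is simply
\[
u(x):=v(y),\qquad y=x/|x|^2 ,
\]
a $C^{2m}$ nonnegative function on $B_2(0)\backslash\{0\}$ because $|y|>1/2$ is equivalent to $|x|<2$. By \eqref{eq1.27}, $\Delta^m v(y)=|x|^{n+2m}\Delta^m u(x)=|x|^{4m}\Delta^m u(x)$, so the hypothesis $0\le-\Delta^m v\le f(v)$ on ${\bb R}^n\backslash B_{1/2}(0)$ becomes
\[
0\le-\Delta^m u\le|x|^{-4m}f(u)\quad\text{in}\quad B_2(0)\backslash\{0\}.
\]

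Next I would recast this weighted inequality in the form treated by Theorem~\ref{thm3.2}. Since $f(t)=O(t^\sigma)$, fix $C>0$ with $f(t)\le C(t+1)^\sigma$ for all $t\ge0$. A short elementary estimate — distinguishing the cases $u\le|x|^{-4m}$ and $u\ge|x|^{-4m}$ and using that $|x|^{-4m}$ is large for $|x|$ small — shows that for some $\rho\in(0,1)$,
\[
C|x|^{-4m}(u+1)^\sigma\le\big(u+|x|^{-4m}\big)^{\sigma+2}\qquad\text{whenever}\quad 0<|x|<\rho,\ u\ge0 .
\]
Hence, after a harmless rescaling carrying $B_\rho(0)$ onto $B_2(0)$, $u$ becomes a $C^{2m}$ nonnegative solution of $0\le-\Delta^m u\le(u+g_0(x))^{\lambda_0}$ in $B_2(0)\backslash\{0\}$ with $g_0(x)=|x|^{-4m}$ and $\lambda_0=\sigma+2$ — an instance of $\sum_{k=0}^{2m-1}(|D^ku|+g_k(x))^{\lambda_k}$ with only the $k=0$ term present — to which Theorem~\ref{thm3.2} applies. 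Just as the bound in Theorem~\ref{thm1.7} is the same, namely $o(|x|^{-(n-2)}\log\frac5{|x|})$, for every $\lambda>\frac{2n-2}{n-2}$, the weight $g_0(x)=|x|^{-4m}=|x|^{-2n}$ places us in that same regime of Theorem~\ref{thm3.2} for \emph{every} $\sigma>0$, yielding
\[
u(x)=o\Big(|x|^{-(n-2)}\log\tfrac5{|x|}\Big)\quad\text{as}\quad x\to0 .
\]

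Finally, I would undo the transform: since $v(y)=u(x)$ with $|x|=1/|y|$, we have $|x|^{-(n-2)}\log\frac5{|x|}=|y|^{n-2}\log5|y|$, so the estimate for $u$ is exactly $v(y)=o(|y|^{n-2}\log5|y|)$ as $|y|\to\infty$. The main obstacle is the middle step: one must check that $g_0(x)=|x|^{-4m}$ and $\lambda_0=\sigma+2$ meet the hypotheses of Theorem~\ref{thm3.2} and — more delicately — that its conclusion for these data is exactly the above $o(\cdot)$ bound, with no extra powers of $|x|$ or $\log\frac5{|x|}$; this is precisely where the value $4m=2n$ of the Kelvin weight, forced by $n=2m$ in case (v), is used.
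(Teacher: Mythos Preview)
Your overall strategy --- Kelvin transform followed by an appeal to Theorem~\ref{thm3.2} --- is exactly the route the paper takes.  The gap is in the step you yourself flag as the ``main obstacle'': with your choice $g_0(x)=|x|^{-4m}=|x|^{-2n}$ and $\lambda_0=\sigma+2$, the hypotheses of Theorem~\ref{thm3.2} are \emph{not} met.  Part~(ii) requires \eqref{eq3.24}, namely $g_0(x)=o(|x|^{-(n-2)}\log\frac5{|x|})$, and $|x|^{-2n}$ blows up far faster than $|x|^{-(n-2)}\log\frac5{|x|}$ (here $2n>n-2$).  Part~(i) requires $g_0(x)=O(|x|^{-(n-2)})$, which fails for the same reason.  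So Theorem~\ref{thm3.2} simply does not apply to the inequality you produce, and the analogy ``the bound in Theorem~\ref{thm1.7} is the same for every $\lambda$'' does not help: the size of $g_0$ enters the hypotheses, not just the conclusion.

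The fix is to raise the exponent enough so that the Kelvin weight can be absorbed into a $g_0$ of the \emph{critical} order.  The paper takes $\lambda=\sigma+\frac{2n}{n-2}$ and $p=\lambda/\sigma$, and uses Young's inequality
\[
\Big(\tfrac1{|x|}\Big)^{2n/\sigma}u(x)\ \le\ u(x)^p+\Big(\tfrac1{|x|}\Big)^{2nq/\sigma}
\]
(with $q$ the conjugate of $p$).  The choice of $\lambda$ is made precisely so that $\frac{2n}{\sigma(p-1)}=n-2$, which yields
\[
0\le-\Delta^m u(x)\le\Big(u(x)+o\big(|x|^{-(n-2)}\log\tfrac5{|x|}\big)\Big)^{\lambda}
\]
in $B_2(0)\backslash\{0\}$; now $g_0$ is exactly small enough for \eqref{eq3.24}, and Theorem~\ref{thm3.2}(ii) gives $u(x)=o(|x|^{-(n-2)}\log\frac5{|x|})$.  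Your exponent $\sigma+2$ is too small for this to work (note $\frac{2n}{n-2}>2$ since $n=2m\ge6$).  Once you replace $\sigma+2$ by $\sigma+\frac{2n}{n-2}$ and redo the absorption via Young's inequality, the rest of your argument goes through unchanged.
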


\begin{thm}\label{thm1.16}Let $v(y)$ be a $C^{2m}$ nonnegative solution of \eqref{eq1.25} where the integers $m$ on $n$ satisfy $($v$)$ and $f\colon [0,\infty)\to[0,\infty)$ is a continuous function satisfying
\[
\log(1+f(t)) = O(t^\lambda)\quad \text{as}\quad t\to\infty
\]
where $0<\lambda<1$. Then
\[
v(y) = o(|y|^{\frac{n-2}{1-\lambda}}) \quad \text{as}\quad |y|\to\infty.
\]
\end{thm}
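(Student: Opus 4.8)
The plan is to reduce Theorem~\ref{thm1.16} to Theorem~\ref{thm3.3} (the general interior estimate of which Theorem~\ref{thm1.9} is a special case) by means of the $m$-Kelvin transform. Since $m$ and $n$ satisfy (v) we have $n=2m$, so the exponent $n-2m$ in \eqref{eq1.26} vanishes and the $m$-Kelvin transform of $v$ is simply $u(x):=v(x/|x|^2)$, which is $C^{2m}$ and nonnegative on $B_2(0)\backslash\{0\}$ because $v$ is $C^{2m}$ and nonnegative on ${\bb R}^n\backslash B_{1/2}(0)$. The $m$-Kelvin transform being an involution, $v$ is in turn the $m$-Kelvin transform of $u$, so \eqref{eq1.27} gives
\[
\Delta^m v(y)=|x|^{\,n+2m}\Delta^m u(x)=|x|^{2n}\Delta^m u(x),\qquad y=x/|x|^2,\ \ |x|=1/|y|.
\]
Hence the hypothesis $0\le-\Delta^m v\le f(v)$ in ${\bb R}^n\backslash B_{1/2}(0)$, which corresponds to $0<|x|<2$, becomes $0\le-\Delta^m u\le |x|^{-2n}f(u)$ in $B_2(0)\backslash\{0\}$.

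Next I would rewrite this using the growth of $f$. Since $\log(1+f(t))=O(t^\lambda)$ as $t\to\infty$, there is a constant $C>0$ with $f(t)\le Ce^{Ct^\lambda}$ for all $t\ge0$, so that $u$ is a $C^{2m}$ nonnegative solution of
\[
0\le-\Delta^m u\le C\,|x|^{-2n}e^{Cu^\lambda}\qquad\text{in }B_2(0)\backslash\{0\}.
\]
The right-hand side is a fixed negative power of $|x|$ times the exponential of a constant multiple of $u^\lambda$, which is precisely the type of nonlinearity (with unbounded — here merely polynomial — coefficients) admitted by Theorem~\ref{thm3.3}; alternatively one may absorb the factor $C|x|^{-2n}$ into the exponential, writing $C|x|^{-2n}e^{Cu^\lambda}\le\exp\big(c\,(u+g(x))^\lambda\big)$ for a suitable constant $c$ and a suitable auxiliary function $g(x)\asymp\big(\log(5/|x|)\big)^{1/\lambda}$, using only $0<\lambda<1$ and $\max(a,b)\ge\tfrac12(a+b)$. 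In either reading the coefficients produced by the Kelvin transform grow so slowly that they do not affect the leading term, and Theorem~\ref{thm3.3} yields $u(x)=o\big(|x|^{-(n-2)/(1-\lambda)}\big)$ as $x\to0$. Undoing the inversion ($|y|=1/|x|$, $v(y)=u(x)$) converts this into $v(y)=o\big(|y|^{(n-2)/(1-\lambda)}\big)$ as $|y|\to\infty$, which is the assertion of Theorem~\ref{thm1.16}.

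All of the genuine analysis is thus contained in Theorem~\ref{thm3.3}, proved in Section~\ref{sec3}; in the present deduction the single point to be checked with care is that the factor $|x|^{-2n}$ created by the Kelvin transform, and the constant in the exponential, do not degrade the exponent $(n-2)/(1-\lambda)$. I expect this to be routine: the obstruction is only polynomial in $|x|^{-1}$, whereas the hypotheses of Theorem~\ref{thm3.3} accommodate coefficients that grow like positive powers of $|x|^{-1}$, so $\big(\log(5/|x|)\big)^{1/\lambda}$ is dominated with room to spare. The real difficulty — converting the differential inequality $0\le-\Delta^m u\le(\text{exponential nonlinearity})$ into the pointwise bound $o\big(|x|^{-(n-2)/(1-\lambda)}\big)$, by rescaling on dyadic annuli and potential-theoretic representation of $\Delta^m u$ — sits one level down, inside Theorem~\ref{thm3.3}.
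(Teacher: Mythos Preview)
Your reduction is correct and matches the paper's route: apply the $m$-Kelvin transform (with $2m=n$, so $u(x)=v(x/|x|^2)$), absorb the resulting prefactor $|x|^{-2n}$ and the multiplicative constant into a term $g^\lambda$ with $g(x)\asymp(\log(1/|x|))^{1/\lambda}=o\big(|x|^{-(n-2)/(1-\lambda)}\big)$, and invoke Theorem~\ref{thm3.3}. The paper orders the steps slightly differently---first scaling $v$ to reach the form $e^{v^\lambda+g^\lambda}$ of Theorem~\ref{thm4.3}, then Kelvin-transforming---but the content is identical; the only point to make explicit is that Theorem~\ref{thm3.3} is stated with coefficient~$1$ on $u^\lambda$, so the constant $C$ in $e^{Cu^\lambda}$ is removed by replacing $u$ with $C^{1/\lambda}u$ (a scaling), not by enlarging $g$.
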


Theorems \ref{thm1.14}--\ref{thm1.16} are optimal for Question~2 in
the same way that Theorems~\ref{thm1.4}, \ref{thm1.7}, and
\ref{thm1.9} are optimal for Question~1. For example, according to the
following theorem, the bound \eqref{eq1.29} in Theorem~\ref{thm1.14}
is optimal. We will omit the precise statements and proofs of the other
optimality results for Theorems~\ref{thm1.14}--\ref{thm1.16}.

\begin{thm}\label{thm1.17}
  Suppose $m$ and $n$ are integers satisfying (iv) and $\lambda$ and
  $b$ are constants satisfying
\begin{equation}\label{eq1.30}
0 < \lambda < \frac{n}{n-2m}\quad \text{and}\quad 
b = \frac{2m(n-2)}{n-\lambda(n-2m)}.
\end{equation}
Let $\varphi\colon (1,\infty)\to (0,1)$ be a continuous function
satisfying $\lim\limits_{r\to\infty} \varphi(r) = 0$. Then there
exists a $C^\infty$ positive solution $v(y)$ of
\begin{equation}\label{eq1.31}
0 \le - \Delta^m v \le v^\lambda\quad \text{in}\quad {\bb R}^n\backslash\{0\}
\end{equation}
such that
\begin{equation}\label{eq1.32}
v(y) \ne O(\varphi(|y|) |y|^b)\quad \text{as}\quad |y|\to\infty.
\end{equation}
\end{thm}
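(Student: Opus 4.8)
The plan is to derive Theorem~\ref{thm1.17} from Theorems~\ref{thm1.5} and \ref{thm1.4} by means of the $m$-Kelvin transform \eqref{eq1.26}, after choosing the exponent in Theorem~\ref{thm1.5} cleverly. Write $D=n-\lambda(n-2m)$, so $D>0$ and $b=\tfrac{2m(n-2)}{D}$. I would define the auxiliary exponent $\lambda_1$ by requiring
\[
D_1:=n-\lambda_1(n-2m)=\frac{4m(m-1)\,D}{(n-2m)D+2m(n-2)} .
\]
Using only $m\ge 3$ and $2m<n$ one checks the elementary inequalities $0<D_1<D$ and $D_1<\tfrac{4m(m-1)}{n-2}$: the first says $\lambda_1>\lambda$, and the second together with $D_1>0$ says $\tfrac{2m+n-2}{n-2}<\lambda_1<\tfrac{n}{n-2m}$, so Theorem~\ref{thm1.5} and the ``resp.'' case of Theorem~\ref{thm1.4} apply with exponent $\lambda_1$. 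Setting $a_1:=\tfrac{4m(m-1)}{D_1}$, the defining relation for $\lambda_1$ is exactly equivalent to $a_1-(n-2m)=b$.

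Next I would apply Theorem~\ref{thm1.5} with exponent $\lambda_1$ and with $\tilde\varphi(s):=\varphi(1/s)$ (continuous, mapping $(0,1)$ into $(0,1)$, with $\tilde\varphi(s)\to0$ as $s\to0^+$), obtaining a $C^\infty$ positive solution $u$ of $0\le-\Delta^m u\le u^{\lambda_1}$ in $\bb R^n\backslash\{0\}$ with $u(x)\ne O(\tilde\varphi(|x|)|x|^{-a_1})$ as $x\to0$. Let $v$ be its $m$-Kelvin transform, $v(y)=|x|^{n-2m}u(x)$ with $x=y/|y|^2$. Then $v$ is $C^\infty$ and positive on $\bb R^n\backslash\{0\}$, and by \eqref{eq1.27},
\[
0\le-\Delta^m v(y)=|x|^{n+2m}(-\Delta^m u(x))\le |x|^{n+2m}u(x)^{\lambda_1}=|y|^{c_1}v(y)^{\lambda_1},
\qquad c_1:=\lambda_1(n-2m)-n-2m=-(D_1+2m)<0 .
\]
Tracking the substitution $x_k\mapsto y_k=x_k/|x_k|^2$ (so $|y_k|=1/|x_k|\to\infty$) and using $a_1-(n-2m)=b$ together with $\tilde\varphi(|x_k|)=\varphi(|y_k|)$ gives $v(y_k)/(\varphi(|y_k|)|y_k|^{b})=u(x_k)/(\tilde\varphi(|x_k|)|x_k|^{-a_1})\to\infty$, i.e.\ $v(y)\ne O(\varphi(|y|)|y|^{b})$ as $|y|\to\infty$.

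The key point is to remove the weight $|y|^{c_1}$ from the inequality for large $|y|$. By the ``resp.'' case of Theorem~\ref{thm1.4} applied to $u$ we have $u(x)=o(|x|^{-a_1})$ as $x\to0$, hence $v(y)\le|y|^{a_1-(n-2m)}=|y|^{b}$ for $|y|\ge R_0$, some $R_0$. Now comes the crucial (routine) algebraic identity, which I would verify from the formula for $D_1$: both $b(\lambda_1-\lambda)$ and $D_1+2m$ equal $\tfrac{2m(n-2)(D+2m)}{(n-2m)D+2m(n-2)}$, so $b(\lambda_1-\lambda)=-c_1$. Since $\lambda_1-\lambda>0$, for $|y|\ge R_0$
\[
|y|^{c_1}v(y)^{\lambda_1}=\bigl(|y|^{c_1}v(y)^{\lambda_1-\lambda}\bigr)v(y)^{\lambda}\le \bigl(|y|^{c_1+b(\lambda_1-\lambda)}\bigr)v(y)^{\lambda}=v(y)^{\lambda},
\]
so $0\le-\Delta^m v\le v^\lambda$ holds in $\{|y|>R_0\}$.

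It remains to replace $v$ on $\{0<|y|\le R_0\}$ by something that makes it a global solution without changing its behavior at $\infty$; this I expect to be the main obstacle, since $R_0$ is forced by Theorem~\ref{thm1.4} and cannot be shrunk. The cleanest route is to observe that the solution $u$ produced in the proof of Theorem~\ref{thm1.5} may be taken to equal a positive constant $\kappa$ for $|x|$ bounded away from $0$ (its ``bumps'' accumulate only at the origin); then automatically $v(y)=\kappa|y|^{2m-n}$ for $|y|$ large, which is $C^\infty$ and positive and satisfies $-\Delta^m v=0\le v^\lambda$ there (as $|y|^{2m-n}$ is a fundamental solution of $\Delta^m$ away from $0$), and it is literally the function $v$ of the previous step, so no gluing is needed. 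Failing such a structural feature, one glues: extend the good $v$ from $\{|y|\ge 2R_0\}$ into $B_{2R_0}$ interpolating to $\kappa|y|^{2m-n}$ ($\kappa$ small) near $0$, then add a polyharmonic Dirichlet correction supported in a ball --- nonnegative by Boggio's positivity formula --- to restore $-\Delta^m v\ge0$ in the transition shell, after which $-\Delta^m v\le v^\lambda$ there follows from the uniform bounds for $v$ and its derivatives on that fixed shell. Steps 1--3 are essentially forced: once one spots the exponent $\lambda_1$ with $a_1-(n-2m)=b$, the identity $b(\lambda_1-\lambda)=-c_1$ is precisely the assertion that the Kelvin image of a solution blowing up at $0$ like $|x|^{-a_1}$ solves the \emph{un-weighted} inequality $0\le-\Delta^m v\le v^\lambda$ near $\infty$ and grows there like $|y|^{b}$.
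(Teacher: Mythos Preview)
Your reduction via the auxiliary exponent $\lambda_1$ and the identity $b(\lambda_1-\lambda)=-c_1$ is correct and elegant; it does yield a $C^\infty$ positive $v$ on $\bb R^n\backslash\{0\}$ with $0\le-\Delta^m v\le v^\lambda$ for $|y|\ge R_0$ and the required blow-up \eqref{eq1.32}. The gap is Step~5. Your fix (a) is wrong on two counts: the $u$ built in Lemma~\ref{lem2.4} (and hence in the proof of Theorem~\ref{thm1.5}) has the form $N(x)+C|x|^{-(n-2)}$ and is never constant; and even if it were constant for $|x|\ge\epsilon$, its Kelvin image would be $\kappa|y|^{2m-n}$ for $|y|\le1/\epsilon$, not ``for $|y|$ large''. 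What \emph{is} true is that $-\Delta^m u=0$ outside the bumps, hence $-\Delta^m v=0$ for $|y|$ below the image of the outermost bump (roughly $|y|<5/3$); but this still leaves the annulus $\{5/3\lesssim|y|<R_0\}$, on which $-\Delta^m v\ne0$ and you only know $-\Delta^m v\le|y|^{c_1}v^{\lambda_1}$, with $R_0$ supplied by Theorem~\ref{thm1.4} and therefore depending on $u$ itself---a genuine circularity. Your Boggio suggestion (b) gives no mechanism for preserving the \emph{upper} bound after the correction.

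There is, however, a one-line repair you missed: replace $v$ by $\tilde v:=v+M$ with $M$ large. Since $-\Delta^m v$ is continuous on the compact set $\{5/3\le|y|\le R_0\}$ it is bounded there by some $K$; choosing $M\ge K^{1/\lambda}$ gives $-\Delta^m\tilde v\le K\le M^\lambda\le\tilde v^\lambda$ there, while $-\Delta^m\tilde v\le v^\lambda\le\tilde v^\lambda$ for $|y|\ge R_0$ and $-\Delta^m\tilde v=0$ for $|y|<5/3$; adding $M$ does not disturb \eqref{eq1.32}. The paper's own proof takes a more direct route that avoids $\lambda_1$ and any patching: it observes that under the Kelvin transform \eqref{eq1.31} is equivalent to the \emph{weighted} inequality $0\le-\Delta^m u\le|x|^{\tau}u^\lambda$ in $\bb R^n\backslash\{0\}$ with $\tau=\lambda(n-2m)-n-2m$, and then constructs $u$ satisfying this globally straight from Lemma~\ref{lem2.4} and Remark~\ref{rem2.1} (which was stated for a general weight $|x|^\tau$ precisely for this purpose). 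No auxiliary exponent, no post-hoc adjustment.
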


Nonnegative solutions in a punctured neighborhood of the origin in
${\bb R}^n$---or near $x=\infty$ via the $m$-Kelvin transform---of
problems of the form
\begin{equation}\label{eq1.33}
 -\Delta^m u = f(x,u)\quad\text{or}\quad 0\le - \Delta^mu \le f(x,u)
\end{equation}
when $f$ is a nonnegative function have been studied in
\cite{CMS,CX,GL,H,MR,WX,X} and elsewhere.

Pointwise estimates at $x=\infty$ of solutions $u$ of problems
\eqref{eq1.33} can be crucial for proving existence results for entire
solutions of \eqref{eq1.33} which in turn can be used to obtain, via
scaling methods, existence and estimates of solutions of boundary
value problems associated with \eqref{eq1.33}, see
e.g. \cite{RW1,RW2}. An excellent reference for polyharmonic boundary
value problems is \cite{GGS}.

Also, weak solutions of $\Delta^mu = \mu$, where
$\mu$ is a measure on a subset of ${\bb R}^n$, have been studied in
\cite{CDM,FKM,FM}, and removable isolated singularities of 
$\Delta^mu=0$ have been studied in \cite{H}.

Our proofs rely on a representation formula for $C^{2m}$ nonnegative
solutions of \eqref{eq1.3-2} which we state in Lemma \ref{lem2.1} and
which we proved in \cite{GMT}. Our proofs also require Riesz potential
estimates as stated, for example, in \cite[Lemma 7.12]{GT}.

\section{Preliminary Results}\label{sec2}

\indent 

A fundamental solution of $\Delta^m$ in ${\bb R}^n$, where $m\ge 1$
and $n\ge 2$ are integers, is given by
\begin{numcases}{\Phi(x):=A}
(-1)^m |x|^{2m-n}, & if $2 \le 2m < n$; \label{eq2.1}\\
(-1)^{\frac{n-1}2}|x|^{2m-n}, & if $3\le n < 2m$ and  $n$ is odd; \label{eq2.2}\\
(-1)^{\frac{n}2} |x|^{2m-n} \log \frac5{|x|}, & if $2\le n \le 2m$ 
 and $n$ is even; \label{eq2.3}
\end{numcases}
where $A=A(m,n)$ is a \emph{positive} constant whose value may change
from line to line throughout this entire paper.
In the sense of distributions, $\Delta^m\Phi = \delta$, where $\delta$ is the Dirac mass at the origin in ${\bb R}^n$. For $x\ne 0$ and $y\ne x$, let
\begin{equation}\label{eq2.4}
 \Psi(x,y) = \Phi(x-y) - \sum_{|\alpha|\le 2m-3} \frac{(-y)^\alpha}{\alpha!} D^\alpha\Phi(x)
\end{equation}
be the error in approximating $\Phi(x-y)$ with the partial sum of degree $2m-3$ of the Taylor series of $\Phi$ at $x$.

The following lemma, which we proved in \cite{GMT}, gives representation formula \eqref{eq2.6} for nonnegative solutions of inequality \eqref{eq2.5}.

\begin{lem}\label{lem2.1}
Let $u(x)$ be a $C^{2m}$ nonnegative solution of
\begin{equation}\label{eq2.5}
 -\Delta^m u\ge 0\quad \text{in}\quad B_2(0)\backslash\{0\} \subset {\bb R}^n,
\end{equation}
where $m\ge 1$ and $n\ge 2$ are integers. Then $\intl_{|y|<1}
|y|^{2m-2}(-\Delta^m u(y))\,dy<\infty$ and
\begin{equation}\label{eq2.6}
 u = N+h + \sum_{|\alpha|\le 2m-2} a_\alpha D^\alpha\Phi\quad
 \text{in}\quad 
B_1(0)\backslash\{0\}
\end{equation}
where $a_\alpha, |\alpha|\le 2m-2$, are constants, $h\in C^\infty(B_1(0))$ is a solution of
\[
 \Delta^m h = 0\quad \text{in}\quad B_1(0),
\]
and 
\begin{equation}\label{eq2.7}
 N(x) = \intl_{|y|\le 1} \Psi(x,y) \Delta^mu(y)\,dy \quad \text{for}\quad x\ne 0.
\end{equation}
\end{lem}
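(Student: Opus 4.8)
The plan is to argue in four stages: (1) the weighted integrability $\int_{|y|<1}|y|^{2m-2}(-\Delta^m u)\,dy<\infty$; (2) that the integral in \eqref{eq2.7} converges for every $x\neq0$, that $\Delta^mN=\Delta^mu$ in $B_1(0)\backslash\{0\}$, and that $N(x)=O(\Gamma(|x|))$ as $x\to0$; (3) consequently $v:=u-N$ solves $\Delta^mv=0$ in $B_1(0)\backslash\{0\}$; and (4) a polyharmonic B\^ocher-type classification of such $v$ that produces the smooth part $h$ and the singular terms $\sum_{|\alpha|\le2m-2}a_\alpha D^\alpha\Phi$.

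For stage (1) I would pass to spherical averages: writing $\phi(r)$ for the mean of $u$ on $|x|=r$, one has $\phi\ge0$, $\phi\in C^{2m}(0,1)$ and $L^m\phi\le0$, where $L=\partial_r^2+\frac{n-1}{r}\partial_r$ (spherical averaging commutes with $\Delta$), and the claim becomes $\int_0^1 r^{n+2m-3}(-L^m\phi)\,dr<\infty$. The Emden--Fowler substitution $r=e^{-t}$, $w(t)=\phi(e^{-t})$, gives $L^m\phi=e^{2mt}P(D)w$ with $D=\partial_t$ and $P(D)=\prod_{j=0}^{m-1}(D+2j)(D+2j+2-n)$, a constant-coefficient operator whose (real) roots are $-2j$ and $n-2-2j$, $0\le j\le m-1$; the integral becomes $\int_T^\infty e^{-(n-2)t}(-P(D)w)\,dt$. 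For $n\ge3$ the largest root is $n-2$, so write $P(D)=(D-(n-2))P_1(D)$, put $\xi=P_1(D)w$ and $\zeta=e^{-(n-2)t}\xi$; then $\zeta'=e^{-(n-2)t}P(D)w\le0$, whence $\int_T^\infty e^{-(n-2)t}(-P(D)w)\,dt=\zeta(T)-\zeta(\infty)$. If $\zeta(\infty)>-\infty$ we are done. Otherwise $P_1(D)w\le-e^{(n-2)t}$ for large $t$; subtracting the explicit solution $-e^{(n-2)t}/P_1(n-2)$ of $P_1(D)(\cdot)=-e^{(n-2)t}$ (note $P_1(n-2)>0$ since every remaining root is $<n-2$) reduces matters to the elementary lemma that $S(D)W\le0$ on a half-line, with $S$ having all real roots and largest root $\mu^*$, implies $W(t)\le Ce^{(\mu^*+\epsilon)t}$ (proved by inductively dividing out the top root); since here $\mu^*<n-2$, this forces $w(t)\to-\infty$, contradicting $w\ge0$. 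When $\Phi$ carries a logarithmic factor (in particular when $n=2$) the same scheme works with the usual logarithmic adjustments.

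For stage (2), Taylor's theorem applied to $\Phi$ at $x$ exhibits $\Psi(x,y)$ in \eqref{eq2.4} as a remainder of order $2m-2$, so $|\Psi(x,y)|\le C(x)|y|^{2m-2}$ for $|y|\le|x|/2$; for $|x|/2<|y|\le1$ one bounds $|\Phi(x-y)|$ and $|D^\alpha\Phi(x)|$, $|\alpha|\le2m-3$, directly (using $\Phi\in L^1_{\mathrm{loc}}$). With stage (1) this makes the integral in \eqref{eq2.7} absolutely convergent for each $x\ne0$, and the same splitting, together with the Riesz potential estimates of \cite[Lemma~7.12]{GT} applied on $|x|/2<|y|<2|x|$, gives $N(x)=O(\Gamma(|x|))$ (indeed $o(\Gamma(|x|))$). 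Finally $\Delta^m_xD^\alpha\Phi(x)=D^\alpha\delta(x)=0$ for $x\ne0$, so $\Delta^m_x\Psi(x,y)=\Delta^m_x\Phi(x-y)=\delta(x-y)$ in $B_1(0)\backslash\{0\}$; pairing $N$ with $\psi\in C_c^\infty(B_1(0)\backslash\{0\})$ and using Fubini yields $\Delta^mN=\Delta^mu$ there, and then interior elliptic regularity gives $N\in C^{2m}(B_1(0)\backslash\{0\})$.

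For stages (3)--(4), $v:=u-N$ is polyharmonic, hence real-analytic, in $B_1(0)\backslash\{0\}$, and from $u\ge0$ and $N=O(\Gamma)$ we have the one-sided bound $v(x)\ge-C\Gamma(|x|)$. Expanding $v$ in spherical harmonics, $\Delta^mv=0$ forces each radial coefficient to be a combination of the regular solutions $r^k,r^{k+2},\dots,r^{k+2m-2}$ (each a polynomial in $x$) and the singular solutions $r^{2i-n-k}$, $1\le i\le m$ (with the usual logarithmic corrections in resonant cases). A singular term $r^{2i-n-k}Y_k$ with $k\ge 2i-1$ has exponent $<2-n$, hence blows up strictly faster than $\Gamma$, while having angular part of positive degree and therefore mean zero; looking at the most singular such term present and using $v\ge-C\Gamma$ (a nonnegative combination of spherical harmonics of positive degrees must vanish) shows that all these terms are absent. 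The surviving singular terms, those with $0\le k\le 2i-2$, span exactly the space of the $D^\alpha\Phi$, $|\alpha|\le2m-2$; collecting them gives $\sum_{|\alpha|\le2m-2}a_\alpha D^\alpha\Phi$, and $h:=v-\sum_{|\alpha|\le2m-2}a_\alpha D^\alpha\Phi$ has only regular terms in its expansion, hence extends to a function in $C^\infty(B_1(0))$ with $\Delta^mh=0$, which is \eqref{eq2.6}. I expect the main obstacle to be stage (4): showing that the singular terms stop exactly at $|\alpha|=2m-2$, where one must exploit together the order $2m-2$ of the Taylor subtraction in \eqref{eq2.4}, the bound $N=O(\Gamma)$, and the sign condition $u\ge0$.
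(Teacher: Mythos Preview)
Note first that the paper does not give its own proof of Lemma~\ref{lem2.1}; it simply quotes the result from \cite{GMT}. So your outline is necessarily a reconstruction rather than a parallel to anything written here.

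Stages (1)--(3) are sound in outline: the Emden--Fowler reduction is standard for (1) (and in fact yields the stronger conclusion $\bar u(r)=O(\Gamma(r))$, cf.\ Theorem~\ref{thm1.12}), and the convergence of \eqref{eq2.7} together with $\Delta^m N=\Delta^m u$ in $B_1\setminus\{0\}$ follow as you say from the Taylor-remainder bound $|\Psi(x,y)|\le C(x)|y|^{2m-2}$.

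The genuine gap is the pointwise assertion $N(x)=O(\Gamma(|x|))$ in stage (2), on which your stage (4) rests through the lower bound $v\ge -C\Gamma$. This assertion is false. The construction in Lemma~\ref{lem2.4} of this very paper (with $m$ odd, $2m<n$, and $r_j/|x_j|\to 0$) produces a nonnegative $u$ with $-\Delta^m u\ge 0$ of the form $u=N+C|x|^{2-n}$, for which \eqref{eq2.30} gives $|x_j|^{n-2}N(x_j)\ge A\varepsilon_j(|x_j|/r_j)^{n-2m}-C\to\infty$. Riesz potential estimates deliver $L^p\!\to\!L^q$ mapping properties, not pointwise control; on the shell $|x|/2<|y|<2|x|$ you only know $f\in L^1$ with mass $o(|x|^{2-2m})$, which says nothing about $\int|x-y|^{2m-n}f(y)\,dy$ at a single point. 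What \emph{is} true (and is essentially the content of \eqref{eq2.10}--\eqref{eq2.11}) is the one-sided bound $N(x)\ge -C\Gamma(|x|)$ when $m$ is odd, but that gives an \emph{upper} bound on $v=u-N$, not the lower bound your stage (4) needs.

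Your spherical-harmonic scheme in stage (4) can be repaired by dropping the pointwise route and arguing coefficient by coefficient. From $u\ge 0$ one has $|u_k(r)|\le C_k\,\bar u(r)=O(\Gamma(r))$ for every degree $k$. The radial coefficient $v_0=\bar v$ solves $L^m v_0=0$ on $(0,1)$ and is therefore automatically $O(r^{2-n})$ (its most singular fundamental solution is $r^{2-n}$); hence $\bar N=\bar u-\bar v=O(\Gamma)$. Combining this with $N+C\Gamma\ge 0$ gives $|N_k(r)|\le C_k(\bar N(r)+\Gamma(r))=O(\Gamma(r))$ for every $k\ge 1$. Thus each $v_k=u_k-N_k$ is $O(r^{2-n})$, and now your elimination of the singular modes $r^{2i-n-k}$ with $k\ge 2i-1$ goes through one $k$ at a time, without ever invoking a pointwise bound on $v$. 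Summing the surviving regular parts into a smooth polyharmonic $h$ then requires a short uniform-in-$k$ estimate.
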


\begin{lem}\label{lem2.2}
Suppose $f$ is locally bounded, nonnegative, and measurable in $\ovl{B_1(0)}\backslash\{0\} \subseteq {\bb R}^n$ and
\begin{equation}\label{eq2.8}
 \intl_{|y|<1} |y|^{2m-2} f(y)\,dy < \infty
\end{equation}
where $m\ge 2$ and $n\ge 2$ are integers, $m$ is odd, and $2m\le n$. Let
\begin{equation}\label{eq2.9}
 N(x) = \intl_{|y|<1}  - \Psi(x,y) f(y)\,dy \quad \text{for}\quad x\in {\bb R}^n\backslash\{0\}
\end{equation}
where $\Psi$ is given by \eqref{eq2.4}. Then $N\in C^{2m-1}({\bb R}^n\backslash\{0\})$. Moreover when $|\beta|<2m$ and either $2m=n$ and $|\beta|\ne 0$ or $2m<n$ we have
\begin{equation}\label{eq2.10}
 (D^\beta N)(x) = \intl_{\underset{\sst |y|<1}{|y-x|<|x|/2}} - (D^\beta\Phi)(x-y) f(y)\,dy + O(|x|^{2-n-|\beta|}) \quad \text{for}\quad x\ne 0
\end{equation}
and when $2m=n$ we have
\begin{equation}\label{eq2.11}
 N(x) = A \intl_{\underset{\sst |y|<1}{|y-x|<|x|/2}} \left(\log \frac{|x|}{|x-y|}\right) f(y)\,dy + O(|x|^{2-n}) \quad \text{for}\quad x\ne 0.
\end{equation}
\end{lem}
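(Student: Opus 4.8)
The plan is to treat $N$ as a Riesz-type potential of $f$ with kernel $-\Psi(x,y)$, to differentiate under the integral sign as many times as the local integrability of the kernel allows, and then to read off the asymptotics by splitting the $y$-integration according to the position of $y$ relative to the two singular points $y=0$ and $y=x$.

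I would begin by recording the elementary bounds on $\Phi$ that drive everything: $|D^k\Phi(z)|\le C|z|^{2m-n-k}$ for every $z\ne0$ and every $k\ge1$ (with no logarithm, even when $2m=n$, since differentiating $\log\frac5{|z|}$ destroys it), while $|\Phi(z)|\le C|z|^{2m-n}$ if $2m<n$ and $|\Phi(z)|\le C\log\frac5{|z|}$ if $2m=n$. Together with Taylor's theorem, for any multi-index $\beta$ one has the exact identity $-D^\beta_x\Psi(x,y)=-R_\beta(x,y)$, where $R_\beta(x,y)$ is the error in approximating $D^\beta\Phi(x-y)$ by the degree-$(2m-3)$ Taylor polynomial of $D^\beta\Phi(\cdot)$ at $x$; and since $2m-2\ge4$, whenever the segment from $x$ to $x-y$ avoids the origin,
\[
|R_\beta(x,y)|\le C|y|^{2m-2}\sup_{0\le t\le1}|D^{2m-2+\beta}\Phi(x-ty)|\le C|y|^{2m-2}\sup_{0\le t\le1}|x-ty|^{2-n-|\beta|},
\]
so in particular $|R_\beta(x,y)|\le C|y|^{2m-2}|x|^{2-n-|\beta|}$ on $\{|y|<|x|/2\}$.

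Next I would establish $N\in C^{2m-1}(\mathbb{R}^n\setminus\{0\})$ together with the formula, valid for $|\beta|\le2m-1$,
\[
D^\beta N(x)=\int_{|y|<1}\Big(-D^\beta\Phi(x-y)+\sum_{|\alpha|\le2m-3}\frac{(-y)^\alpha}{\alpha!}D^{\alpha+\beta}\Phi(x)\Big)f(y)\,dy .
\]
Fixing $x_0\ne0$, working in a small ball $B$ about $x_0$ that misses the origin, and splitting the integral into the part with $y$ near $x_0$ and the part with $y$ away from $x_0$, differentiation under the integral sign is justified by dominated convergence: near $x_0$ the polynomial term is smooth in $x$ and the kernel $D^\beta\Phi(x-y)\sim|x-y|^{2m-n-|\beta|}$ is---precisely because $|\beta|\le2m-1$---locally integrable, with a modulus of continuity uniform for $x\in B$; away from $x_0$ the contribution of $y$ near the origin is controlled by $\int|y|^{2m-2}f(y)\,dy<\infty$ via the remainder bound of the previous step. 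The very same integrability, which fails for $|\beta|=2m$ because $|x-y|^{-n}\notin L^1_{\mathrm{loc}}$, is why one cannot differentiate $N$ beyond order $2m-1$.

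Finally, to extract \eqref{eq2.10} and \eqref{eq2.11} I would split $\{|y|<1\}$ into the near-diagonal set $E=\{|y-x|<|x|/2\}$ (on which $|x|/2<|y|<3|x|/2$), the near-origin set $\{|y|<|x|/2\}$, and the intermediate set $\{|x|/2\le|y|<1,\ |x-y|\ge|x|/2\}$. On $E$ the term $\int_E(-D^\beta\Phi(x-y))f\,dy$ is the claimed main term, and its polynomial companion is $O(|x|^{2-n-|\beta|})$ because $|D^{\alpha+\beta}\Phi(x)|\le C|x|^{2m-n-|\alpha|-|\beta|}$ and $\int_E f\,dy\le C|x|^{2-2m}\int|y|^{2m-2}f\,dy$. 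On $\{|y|<|x|/2\}$ the whole integrand equals $-R_\beta(x,y)$, bounded by $C|x|^{2-n-|\beta|}|y|^{2m-2}$, which again gives $O(|x|^{2-n-|\beta|})$. On the intermediate set one estimates $D^\beta\Phi(x-y)$ and the polynomial directly, using $|x-y|^{2m-n-|\beta|}\le C|x|^{2m-n-|\beta|}$ and the bound $\int_{|x|/2\le|y|<1}|y|^{|\alpha|}f(y)\,dy\le C|x|^{|\alpha|-2m+2}\int|y|^{2m-2}f(y)\,dy$, valid since $|\alpha|\le2m-3<2m-2$; when $2m=n$ and $|\beta|\ge1$ this still works because $D^\beta\Phi$ is then a negative power with no logarithm, and when $2m=n$ and $\beta=0$ the logarithm in $\Phi(x-y)$ is handled by a dyadic decomposition in the shells $\{2^j|x|\le|x-y|<2^{j+1}|x|\}$, where the logarithmic factor grows only linearly in $j$ and is beaten by the geometric decay fed by $\int|y|^{2m-2}f<\infty$. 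For \eqref{eq2.11} the one extra ingredient is the pointwise identity $-\Phi(x-y)=A\log\frac{|x|}{|x-y|}-\Phi(x)$, valid when $2m=n$ (the sign being $(-1)^{n/2}=-1$ since $n/2=m$ is odd): the stray $-\Phi(x)$ cancels exactly the $\alpha=0$ summand $\Phi(x)\int_E f\,dy$ of the polynomial term on $E$, after which the surviving summands have $|\alpha|\ge1$, hence carry no logarithm and the earlier $O(|x|^{2-n})$ bound applies. The principal obstacle is the careful justification of differentiation under the integral sign up to the critical order $2m-1$ with the accompanying $C^{2m-1}$ continuity, together with the dyadic bookkeeping that prevents the logarithmic terms in the borderline case $2m=n$ from producing a spurious $\log\frac1{|x|}$ in the error.
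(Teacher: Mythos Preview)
Your approach is essentially the paper's: Taylor remainder controls the near-origin region, direct bounds handle the rest, and the identity $-\Phi(x-y)+\Phi(x)=A\log\frac{|x|}{|x-y|}$ (valid since $m$ is odd) is the engine behind \eqref{eq2.11}. The $C^{2m-1}$ argument and the three-region split are the same in spirit; the paper phrases the split as $\{|y|<|x|/2\}$ versus its complement, but the content is identical.

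There is one slip in your treatment of the intermediate set when $2m=n$ and $\beta=0$. You say the logarithm in $\Phi(x-y)$ ``grows only linearly in $j$'' on the shells $\{2^j|x|\le|x-y|<2^{j+1}|x|\}$; but $|\Phi(x-y)|=A\log\frac5{|x-y|}\approx\log\frac5{|x|}-j\log 2$, which is of order $\log\frac1{|x|}$ for small $j$, not of order $j$. Summing your dyadic series therefore yields only $O(|x|^{2-n}\log\frac1{|x|})$, a spurious logarithm. The $\alpha=0$ polynomial term $\Phi(x)\int_{\text{int}}f$ carries the same extra $\log$, and you do not account for it there either. The cure is exactly the cancellation you invoke on $E$: apply it on the \emph{intermediate} set too, so that the integrand becomes $A\log\frac{|x|}{|x-y|}$ plus the log-free terms with $|\alpha|\ge1$. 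Now $|\log\frac{|x-y|}{|x|}|\in[j\log 2,(j+1)\log 2]$ on the $j$-th shell, which \emph{is} linear in $j$, and your dyadic sum converges to $O(|x|^{2-n})$. The paper does this in one line without dyadics: since $z:=|x-y|/|x|\ge1/2$ on the intermediate set, $|\log z|\le\log 4z\le\log 4(1+|y|/|x|)\le C(|y|/|x|)^{n-2}$, and integrating against $f$ gives $O(|x|^{2-n})$ directly.
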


\begin{proof}
 Differentiating \eqref{eq2.4} with respect to $x$ we get
\[
 D^\beta_x\Psi(x,y) = (D^\beta\Phi)(x-y) - \sum_{|\alpha|\le 2m-3} \frac{(-y)^\alpha}{\alpha!} (D^{\alpha+\beta} \Phi)(x) \quad \text{for}\quad x\ne 0 \quad \text{and}\quad y\ne x
\]
and so by Taylor's theorem applied to $D^\beta\Phi$ we have
\begin{equation}\label{eq2.12}
 |D^\beta_x\Psi(x,y)| \le C|y|^{2m-2} |x|^{2-n-|\beta|}\quad \text{for}\quad |y|<\frac{|x|}2
\end{equation}
where in this proof $C=C(m,n,\beta)$ is a positive constant whose value may change from line to line.

Let $\vp\in(0,1)$ be fixed. Then $N=N_1+N_2$ in ${\bb R}^n\backslash\{0\}$ where
\[
 N_1(x) = \intl_{|y|<\vp} - \Psi(x,y) f(y)\,dy\quad \text{and}\quad N_2(x) = \intl_{\vp<|y|<1} - \Psi(x,y) f(y)\,dy.
\]
It follows from \eqref{eq2.8} and \eqref{eq2.12} that $N_1\in C^\infty({\bb R}^n\backslash \ovl{B_{2\vp}(0)})$ and
\[
 (D^\beta N_1)(x)  = \intl_{|y|<\vp} - D^\beta\Psi(x,y) f(y)\,dy\quad \text{for}\quad |x|>2\vp.
\]
Also, by the boundedness of $f$ in $B_1(0)\backslash B_\vp(0)$, $N_2\in C^{2m-1}({\bb R}^n\backslash \ovl{B_{2\vp}(0)})$ and for $|\beta|<2m$ we have
\[
 (D^\beta N_2)(x) = \intl_{\vp<|y|<1} - D^\beta\Psi(x,y) f(y)\,dy \quad \text{for}\quad |x|>2\vp.
\]
Thus since $\vp\in (0,1)$ was arbitrary, we have $N\in C^{2m-1}({\bb R}^n\backslash \{0\})$ and for $|\beta|<2m$ we have
\begin{equation}\label{eq2.13}
 (D^\beta N)(x) = \intl_{|y|<1} - D^\beta_x \Psi(x,y) f(y)\,dy\quad \text{for}\quad x\ne 0.
\end{equation}

\n {\bf Case 1.} Suppose $|\beta|<2m$ and either $2m=n$ and $|\beta|\ne 0$ or $2m<n$. Then for $0<|x|/2 < |y|$ we have
\[
 \left|\sum_{|\alpha|\le 2m-3} \frac{(-y)^\alpha}{\alpha!} D^{\alpha+\beta} \Phi(x)\right| \le C \sum_{|\alpha| \le 2m-3} |y|^{|\alpha|} |x|^{2m-n-|\alpha|-|\beta|} \le C|y|^{2m-2} |x|^{2-n-|\beta|}
\]
and for $0<|x|/2<|y|$ and $|y-x| > |x|/2$ we have
\[
 |(D^\beta\Phi)(x-y)| \le C|x-y|^{2m-n-|\beta|} \le C|x|^{2m-n-|\beta|} \le C|y|^{2m-2} |x|^{2-n-|\beta|}.
\]
Thus \eqref{eq2.8}, \eqref{eq2.12} and \eqref{eq2.13} imply \eqref{eq2.10}.
\medskip 

\n {\bf Case 2.} Suppose $2m=n$. Then for $0<|x|/2<|y|$ we have 
\[
 \left|\sum_{1\le |\alpha|\le 2m-3} \frac{(-y)^\alpha}{\alpha!} D^\alpha\Phi(x)\right| \le C \sum_{1\le |\alpha|\le 2m-3} |y|^{|\alpha|} |x|^{2m-n-|\alpha|} \le C |y|^{2m-2} |x|^{2-n}
\]
and if $0 < |x|/2<|y|$ and $|y-x|>|x|/2$ then using the fact that $|\log z|\le \log 4z$ for $z\ge 1/2$ we have
\begin{align*}
 |-\Phi(x-y)+\Phi(x)| &= A\left|\log \frac{|x-y|}{|x|}\right|\le A \log 4 \frac{|x-y|}{|x|}\\
&\le A \frac{|y|^{n-2}}{|x|^{n-2}} \left(\frac{|x|}{|y|}\right)^{n-2} \log 4\left(1 + \frac{|y|}{|x|}\right)\\
&\le A \frac{|y|^{n-2}}{|x|^{n-2}} \max_{r\ge 1/2} r^{2-n} \log 4(1+r).
\end{align*}
Thus \eqref{eq2.11} follows from \eqref{eq2.8}, \eqref{eq2.9}, and \eqref{eq2.12}.
\end{proof}

\begin{lem}\label{lem2.3}
Suppose $u(x)$ is a $C^{2m}$ nonnegative solution of
\[
 -\Delta^m u \ge 0 \quad \text{in}\quad B_2(0)\backslash\{0\} \subset {\bb R}^n
\]
where $m\ge 2$ and $n\ge 2$ are integers, $m$ is odd, and $2m\le
n$. Let $\{x_j\}^\infty_{j=1} \subset {\bb R}^n$ and
$\{r_j\}^\infty_{j=1} \subset {\bb R}$ be sequences such that
\begin{equation}\label{eq2.14}
 0 < 4|x_{j+1}| \le |x_j| \le 1/2 \quad \text{and}\quad 0 < r_j \le |x_j|/4.
\end{equation}
Define $f_j\colon B_2(0)\to [0,\infty)$ by
\begin{equation}\label{eq2.15}
 f_j(\eta) = |x_j|^{2m-2} r^n_jf(y)\quad \text{where}\quad y=x_j+r_j\eta
\quad\text{and} \quad f=-\Delta^mu. 
\end{equation}
Then
\begin{equation}\label{eq2.16}
 \intl_{|\eta|<2} f_j(\eta)\,d\eta \to 0\quad \text{as}\quad j\to \infty
\end{equation}
and when $|\beta|<2m$ and either $2m=n$ and $|\beta|\ne 0$ or
$2m<n$ we have for $|\xi|<1$ that
\begin{equation}\label{eq2.17}
 \left(\frac{r_j}{|x_j|}\right)^{n-2m+|\beta|} |x_j|^{n-2+|\beta|}
|(D^\beta u)(x_j+r_j\xi)| 
\le C\left(\frac{r_j}{|x_j|}\right)^{n-2m+|\beta|} 
+ \vp_j + \intl_{|\eta|<2} \frac{Af_j(\eta)\,d\eta}{|\xi-\eta|^{n-2m+|\beta|}}
\end{equation}
and when $2m=n$ we have for $|\xi|<1$ that
\begin{equation}\label{eq2.18}
 \frac{|x_j|^{n-2}}{\log \frac{|x_j|}{r_j}} u(x_j+r_j\xi) \le \frac{C}{\log \frac{|x_j|}{r_j}} +\vp_j + \frac1{\log \frac{|x_j|}{r_j}} \intl_{|\eta|<2} A\left(\log \frac5{|\xi-\eta|}\right) f_j(\eta)\,d\eta
\end{equation}
where in \eqref{eq2.17} and \eqref{eq2.18} the constant $A$ depends
only on $m$ and $n$,
the constant $C$ is independent of $\xi$ and $j$, the constants $\vp_j$ are independent of $\xi$, and $\vp_j\to 0$ as $j\to \infty$.
\end{lem}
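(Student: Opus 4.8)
The plan is to combine the representation formula \eqref{eq2.6} of Lemma~\ref{lem2.1} with the Riesz-type pointwise estimates \eqref{eq2.10}--\eqref{eq2.11} of Lemma~\ref{lem2.2}, splitting the convolution integrals at the radius $2r_j$ about $x_j$ so that the local piece rescales exactly onto $\{|\eta|<2\}$ while the leftover annular piece is controlled by the tail of $\int_{|y|<1}|y|^{2m-2}f\,dy$. The only point requiring care will be that choice of splitting radius (rather than the cutoff $|x|/2$ produced by Lemma~\ref{lem2.2}) together with the bookkeeping of powers of $r_j/|x_j|$; the annular estimate and the estimates for $h$ and for $\sum a_\alpha D^\alpha\Phi$ are routine.

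\textbf{Setup and \eqref{eq2.16}.} Put $f:=-\Delta^mu$, a continuous nonnegative function on $B_2(0)\setminus\{0\}$ with $\int_{|y|<1}|y|^{2m-2}f(y)\,dy<\infty$ by Lemma~\ref{lem2.1}, and set $E_j:=\int_{|y|\le 2|x_j|}|y|^{2m-2}f(y)\,dy$. Since \eqref{eq2.14} forces $|x_j|\le|x_1|\,4^{-(j-1)}\to0$ and $r_j\to0$, dominated convergence gives $E_j\to0$, and $\log(|x_j|/r_j)\ge\log 4>0$. For $|\eta|<2$, $y=x_j+r_j\eta$ lies in $B_{2r_j}(x_j)\subset\{\tfrac12|x_j|\le|y|\le\tfrac32|x_j|\}\subset B_1(0)\setminus\{0\}$, so $|x_j|^{2m-2}\le 2^{2m-2}|y|^{2m-2}$; changing variables in \eqref{eq2.15} gives $\int_{|\eta|<2}f_j(\eta)\,d\eta=|x_j|^{2m-2}\int_{B_{2r_j}(x_j)}f(y)\,dy\le 2^{2m-2}E_j\to0$, which is \eqref{eq2.16}. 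For the remaining estimates write $x:=x_j+r_j\xi$, $|\xi|<1$, so $\tfrac34|x_j|<|x|<\tfrac54|x_j|<1$ and \eqref{eq2.6} holds at $x$; since $f\ge0$ the function $N$ in \eqref{eq2.6} coincides with the one in Lemma~\ref{lem2.2}, so \eqref{eq2.10}--\eqref{eq2.11} apply. Abbreviate $\tau_j:=(r_j/|x_j|)^{n-2m+|\beta|}|x_j|^{n-2+|\beta|}=r_j^{n-2m+|\beta|}|x_j|^{2m-2}$.

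\textbf{Proof of \eqref{eq2.17}.} Start from $|D^\beta u(x)|\le|D^\beta N(x)|+|D^\beta h(x)|+\sum_{|\alpha|\le 2m-2}|a_\alpha|\,|D^{\alpha+\beta}\Phi(x)|$. As $h$ is smooth on $\ovl{B_{3/4}(0)}$ and $|D^{\alpha+\beta}\Phi(x)|\le C|x|^{2m-n-|\alpha|-|\beta|}$ with $2m-n-|\alpha|-|\beta|<0$ (in the case $2m=n$ this uses $|\beta|\ne0$), and since $|x|\asymp|x_j|\le\tfrac12$ and $2m-2-|\alpha|\ge0$, multiplying the last two groups by $\tau_j$ produces a quantity $\le C(r_j/|x_j|)^{n-2m+|\beta|}$. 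For $N$, apply \eqref{eq2.10} and split $\int_{\{|y-x|<|x|/2,\,|y|<1\}}$ into $\int_{B_{2r_j}(x_j)}$ plus $\int_{\{|y-x|<|x|/2,\,|y|<1,\,|y-x_j|\ge 2r_j\}}$ (legitimate after taking absolute values, as then the integrand is $\ge0$). On the first piece $|D^\beta\Phi(x-y)|\le A|x-y|^{2m-n-|\beta|}$ and $y=x_j+r_j\eta$ with $|\eta|<2$, so the change of variables turns it into $\frac{Ar_j^{2m-n-|\beta|}}{|x_j|^{2m-2}}\int_{|\eta|<2}|\xi-\eta|^{-(n-2m+|\beta|)}f_j(\eta)\,d\eta$. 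On the second piece $|x-y|\ge|y-x_j|-|x-x_j|\ge r_j$, hence $|D^\beta\Phi(x-y)|\le Ar_j^{2m-n-|\beta|}$, while $\{|y-x|<|x|/2\}\subset\{\tfrac38|x_j|<|y|<2|x_j|\}$ gives $\int f(y)\,dy\le C|x_j|^{-(2m-2)}E_j$; and the remainder term in \eqref{eq2.10} is $O(|x|^{2-n-|\beta|})=O(|x_j|^{2-n-|\beta|})$. Multiplying by $\tau_j$, the first piece yields exactly the integral in \eqref{eq2.17}, the second yields $CE_j$, and the remainder together with the contributions of $h$ and of $\sum a_\alpha D^\alpha\Phi$ yields $C(r_j/|x_j|)^{n-2m+|\beta|}$; set $\varepsilon_j:=CE_j$.

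\textbf{Proof of \eqref{eq2.18}.} Here $2m=n$; bound $u(x)\le|N(x)|+|h(x)|+\sum_{|\alpha|\le 2m-2}|a_\alpha|\,|D^\alpha\Phi(x)|$, with $|N(x)|\le A\int_{\{|y-x|<|x|/2,\,|y|<1\}}\log\frac{|x|}{|x-y|}f(y)\,dy+O(|x|^{2-n})$ by \eqref{eq2.11}. Split the integral at $2r_j$ as before (the integrand is $\ge0$ on $B_{2r_j}(x_j)$ since there $|x-y|<3r_j<|x|$). On $B_{2r_j}(x_j)$, with $y=x_j+r_j\eta$ and $|x|<\tfrac54|x_j|$, one has $\log\frac{|x|}{|x-y|}=\log\frac{|x|}{r_j}-\log|\xi-\eta|\le\log\frac{|x_j|}{r_j}+\log\frac5{|\xi-\eta|}$, so after the change of variables this piece is $\le\frac{\log(|x_j|/r_j)}{|x_j|^{n-2}}\int_{|\eta|<2}f_j+\frac1{|x_j|^{n-2}}\int_{|\eta|<2}\log\frac5{|\xi-\eta|}f_j$. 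On $\{|y-x_j|\ge 2r_j\}$, $|x-y|\ge r_j$ gives $\log\frac{|x|}{|x-y|}\le\log\frac{|x|}{r_j}\le C\log\frac{|x_j|}{r_j}$, and $\int f(y)\,dy\le C|x_j|^{-(n-2)}E_j$; also $|x|^{2-n}\le C|x_j|^{2-n}$, $|h(x)|\le C$, and $\sum|a_\alpha|\,|D^\alpha\Phi(x)|\le C\log\frac1{|x_j|}+C|x_j|^{-(n-2)}$. Multiplying through by $\frac{|x_j|^{n-2}}{\log(|x_j|/r_j)}$ turns the second integral above into the last term of \eqref{eq2.18}; the term $A\int_{|\eta|<2}f_j$ (which $\to0$ by \eqref{eq2.16}), the term $CE_j$, and the term $C|x_j|^{n-2}\log\frac1{|x_j|}\to0$ combine into a single $\varepsilon_j\to0$ independent of $\xi$; and the remaining contributions give $\frac{C}{\log(|x_j|/r_j)}$. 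This is \eqref{eq2.18}.
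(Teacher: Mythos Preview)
Your proof is correct and follows essentially the same route as the paper's: both combine Lemma~\ref{lem2.1} with the pointwise estimates \eqref{eq2.10}--\eqref{eq2.11} of Lemma~\ref{lem2.2}, split the resulting integral at radius $2r_j$ about $x_j$, rescale the inner piece onto $\{|\eta|<2\}$, and control the annular piece by the tail $\int_{|y|\lesssim|x_j|}|y|^{2m-2}f$. The only cosmetic difference is that the paper first packages the $h$ and $\sum a_\alpha D^\alpha\Phi$ contributions into a single $O(|x|^{2-n-|\beta|})$ remainder (its equation \eqref{eq2.19}) before treating $N$, whereas you estimate those terms inline; the substance and the bookkeeping of powers of $r_j/|x_j|$ are identical.
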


\begin{proof}
By Lemma~\ref{lem2.1}, $f$ satisfies \eqref{eq2.8} and
for $|\beta|<2m$ we have
\begin{equation}\label{eq2.19}
(D^\beta u)(x) = (D^\beta N)(x) + O(|x|^{2-n-|\beta|})\quad \text{for}\quad 0<|x|\le 3/4
\end{equation}
where $N$ is given by \eqref{eq2.9}.

If
\[
|y-x| < |x|/2,\quad |y-x_j| > 2r_j,\quad \text{and}\quad |x-x_j|<r_j
\]
then
\[
 |x-y| > r_j\quad \text{and}\quad 2|y|>|x| > |x_j| - r_j > |x_j|/2
\]
and thus when $|\beta|<2m$ and either $2m=n$ and $|\beta|\ne 0$ or $2m<n$ we have
\[
 |(D^\beta \Phi)(x-y)|\le \frac{A}{|x-y|^{n-2m+|\beta|}} \le \frac{A}{r^{n-2m+|\beta|}_j} \le \frac{A|y|^{2m-2}}{r^{n-2m+|\beta|}_j|x_j|^{2m-2}}
\]
and when $2m=n$ we have
\[
 \log \frac{|x|}{|x-y|} \le \log \frac{\frac54 |x_j|}{r_j} \le 2\cdot 4^{n-2} \frac{|y|^{n-2}}{|x_j|^{n-2}} \log \frac{|x_j|}{r_j}.
\]
Thus by \eqref{eq2.8} and Lemma~\ref{lem2.2}, when $|\beta|<2m$ and
either $2m=n$ and $|\beta|\ne 0$ or $2m<n$ we have
\begin{align}
 |(D^\beta N)(x)| &\le \intl_{|y-x_j|<2r_j} \frac{Af(y)\,dy}{|x-y|^{n-2m+|\beta|}} + A\frac{\intl_{|y-x|< |x|/2} |y|^{2m-2} f(y)\,dy}{r^{n-2m+|\beta|}_j |x_j|^{2m-2}} + \frac{C}{|x_j|^{n-2+|\beta|}}\notag\\
\label{eq2.20}
&\le \intl_{|y-x_j| <2r_j} \frac{Af(y)\,dy}{|x-y|^{n-2m+|\beta|}}  + \frac{\vp_j}{r^{n-2m+|\beta|}_j|x_j|^{2m-2}} + \frac{C}{|x_j|^{n-2+|\beta|}} \quad \text{for}\quad |x-x_j|<r_j
\end{align}
and when $2m=n$ we have
\begin{align}
 N(x) &\le A \intl_{|y-x_j|<2r_j} \left(\log \frac{|x|}{|x-y|}\right) f(y)\,dy + 2A4^{n-2} \left(\, \intl_{|y-x|<|x|/2} |y|^{n-2}f(y)\,dy\right) \frac{\log\frac{|x_j|}{r_j}}{|x_j|^{n-2}} + \frac{C}{|x_j|^{n-2}}\notag\\
\label{eq2.21}
&\le A \intl_{|y-x_j|<2r_j} \left(\log \frac{|x|}{|x-y|}\right) f(y)\,dy + \vp_j \frac{\log \frac{|x_j|}{r_j}}{|x_j|^{n-2}} + \frac{C}{|x_j|^{n-2}}\quad \text{for}\quad |x-x_j|<r_j
\end{align}
where in \eqref{eq2.20} and \eqref{eq2.21} the constant $A$ depends
only on $m$ and $n$,
the constant $C$ is independent of $x$ and $j$, the constants $\vp_j$ are independent of $x$, and $\vp_j\to 0$ as $j\to\infty$.

For $|\eta| < 2$ and $y$ given by \eqref{eq2.15} we have $|x_j| < 2|y|$. Thus
\begin{align}
 \intl_{|\eta|<2} f_j(\eta)\,d\eta &= \intl_{|y-x_j|<2r_j} |x_j|^{2m-2} f(y)\,dy\notag\\
\label{eq2.22}
&\le 2^{2m-2} \intl_{|y-x_j|<|x_j|/2} |y|^{2m-2} f(y)\,dy \to 0\quad \text{as}\quad j\to\infty
\end{align}
because $f$ satisfies \eqref{eq2.8}.

If $|\beta|<2m$ and either $2m=n$ and $|\beta|\ne 0$ or $2m<n$ 
then by \eqref{eq2.20} and \eqref{eq2.15} we have for
$|\xi|<1$ that 
\begin{align}
\left(\frac{r_j}{|x_j|}\right)^{n-2m+|\beta|} &|x_j|^{n-2+|\beta|}
|(D^\beta N)(x_j+r_j\xi)|\notag\\ &\le C\left(\frac{r_j}{|x_j|}\right)^{n-2m+|\beta|}
+ \vp_j + r^{n-2m+|\beta|}_j |x_j|^{2m-2} \intl_{|\eta|<2} \frac{Af(y)r^n_j\,d\eta}{r^{n-2m+|\beta|}_j|\xi-\eta|^{n-2m+|\beta|}}\notag\\
\label{eq2.23}
&= C\left(\frac{r_j}{|x_j|}\right)^{n-2m+|\beta|} 
+ \vp_j + \intl_{|\eta|<2} \frac{Af_j(\eta)\,d\eta}{|\xi-\eta|^{n-2m+|\beta|}}.
\end{align}

If $2m=n$ and $|\xi|<1$ then by \eqref{eq2.21}, \eqref{eq2.15}, and \eqref{eq2.22} we have
\begin{align}
 \frac{|x_j|^{n-2}}{\log \frac{|x_j|}{r_j}} N(x_j+r_j\xi) &\le \frac{C}{\log \frac{|x_j|}{r_j}} + \vp_j + \frac{|x_j|^{n-2}}{\log \frac{|x_j|}{r_j}} A \intl_{|\eta|<2} \left(\log \frac{5|x_j|}{r_j|\xi-\eta|}\right) |x_j|^{2-n} f_j(\eta)\,d\eta\notag\\
\label{eq2.24}
&\le \frac{C}{\log \frac{|x_j|}{r_j}} + \vp_j + \frac1{\log \frac{|x_j|}{r_j}} \intl_{|\eta|<2} A \left(\log \frac5{|\xi-\eta|}\right) f_j(\eta)\,d\eta.
\end{align}

Inequalities \eqref{eq2.17} and \eqref{eq2.18} now follow from \eqref{eq2.23}, \eqref{eq2.24}, and \eqref{eq2.19}.
\end{proof}

\begin{lem}\label{lem2.4}
Suppose $m\ge 2$ and $n\ge 2$ are integers, $m$ is odd, and $2m\le n$. Let $\psi\colon (0,1)\to (0,1)$ be a continuous function such that $\lim\limits_{r\to 0^+} \psi(r)=0$. Let $\{x_j\}^\infty_{j=1} \subset {\bb R}^n$ be a sequence such that
\begin{equation}\label{eq2.25}
 0 < 4|x_{j+1}| \le |x_j|\le 1/2
\end{equation}
and
\begin{equation}\label{eq2.26}
 \sum^\infty_{j=1} \vp_j<\infty \quad \text{where}\quad \vp_j = \psi(|x_j|). 
\end{equation}
Let $\{r_j\}^\infty_{j=1}\subset {\bb R}$ be a sequence satisfying
\begin{equation}\label{eq2.27}
 0 < r_j \le |x_j|/5.
\end{equation}
Then there exists a positive function $u\in C^\infty({\bb
  R}^n\backslash\{0\})$ and a positive constant $A=A(m,n)$ such that
\begin{alignat}{2}\label{eq2.28}
0 \le &-\Delta^m u \le \frac{\vp_j}{|x_j|^{2m-2}r^n_j} &\quad &\text{in}\quad B_{r_j}(x_j),\\
\label{eq2.29}
&-\Delta^m u(x) =  0 &\quad &\text{in}\quad {\bb R}^n \Big\backslash\left(\{0\} \cup \bigcup^\infty_{j=1} B_{r_j}(x_j)\right),
\end{alignat}
and
\begin{equation}\label{eq2.30}
 u \ge \begin{cases}
\frac{A\vp_j}{|x_j|^{2m-2}r^{n-2m}_j} &\text{in $B_{r_j}(x_j)$ if $2m<n$}\\
\noalign{\medskip}
\frac{A\vp_j}{|x_j|^{n-2}} \log \frac{|x_j|}{r_j}&\text{in
  $B_{r_j}(x_j)$ if $2m=n$.}
       \end{cases}
\end{equation}
\end{lem}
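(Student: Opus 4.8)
The plan is to prescribe $-\Delta^m u$ to be a locally finite sum of small smooth bumps, one supported in each ball $B_{r_j}(x_j)$, to recover $u$ from the representation formula of Lemma~\ref{lem2.1} — that is, from the potential $N$ of Lemma~\ref{lem2.2} built from this source term — and finally to add a positive harmonic function to make $u$ positive. Concretely, fix for each $j$ a function $\chi_j\in C^\infty(\bb R^n)$ with $0\le\chi_j\le1$, $\chi_j\equiv1$ on $B_{r_j/2}(x_j)$ and $\chi_j\equiv0$ off $B_{r_j}(x_j)$, and put
\[
 f_j=\frac{\vp_j}{|x_j|^{2m-2}\,r_j^{\,n}}\,\chi_j,\qquad f=\sum_{j=1}^\infty f_j .
\]
From \eqref{eq2.25} and \eqref{eq2.27} one checks that $B_{r_j}(x_j)\subset\{\,\tfrac45|x_j|<|y|<\tfrac65|x_j|\,\}$, so the balls $B_{r_j}(x_j)$ are pairwise disjoint, lie in $B_{3/5}(0)$, and accumulate only at the origin; hence $f\in C^\infty(\bb R^n\backslash\{0\})$ is nonnegative, supported in $\ovl{B_{3/5}(0)}$, and equals $f_j$ on $B_{r_j}(x_j)$. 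Moreover $\int_{|y|<1}|y|^{2m-2}f(y)\,dy\le(6/5)^{2m-2}\omega_n\sum_j\vp_j<\infty$ by \eqref{eq2.26}, where $\omega_n=|B_1(0)|$, so $f$ satisfies the hypotheses of Lemma~\ref{lem2.2}.

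Let $N$ be defined from this $f$ by \eqref{eq2.9}. Lemma~\ref{lem2.2} then gives $N\in C^{2m-1}(\bb R^n\backslash\{0\})$ together with \eqref{eq2.10} (with $\beta=0$) when $2m<n$ and \eqref{eq2.11} when $2m=n$; and since $\Delta^m_x\Psi(x,y)$ is the Dirac mass at $y$ for $x,y\ne0$, one has $-\Delta^m N=f$ in $\bb R^n\backslash\{0\}$, which (as $f\in C^\infty(\bb R^n\backslash\{0\})$) upgrades by interior elliptic regularity to $N\in C^\infty(\bb R^n\backslash\{0\})$. Because $n\ge2m\ge6$, the function $|x|^{2-n}$ is harmonic in $\bb R^n\backslash\{0\}$, so I set
\[
 u=N+c\,|x|^{2-n}
\]
with the positive constant $c$ to be chosen below. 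Then $\Delta^m u=-f$, so $-\Delta^m u=f$ vanishes off $\bigcup_j B_{r_j}(x_j)$ and satisfies $0\le-\Delta^m u\le\vp_j/(|x_j|^{2m-2}r_j^n)$ on $B_{r_j}(x_j)$ (the balls being disjoint); this gives \eqref{eq2.28} and \eqref{eq2.29}, and $u\in C^\infty(\bb R^n\backslash\{0\})$.

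The essential point for \eqref{eq2.30} and for positivity is that $m$ odd forces $-\Phi$ to be positive where it matters: $-\Phi(z)=A|z|^{2m-n}$ when $2m<n$ by \eqref{eq2.1}, and $-\Phi(z)=A\log\frac5{|z|}$ when $2m=n$ by \eqref{eq2.3}, with $A=A(m,n)>0$. Hence the principal term of $N$ in \eqref{eq2.10}, namely $A\int_{|y-x|<|x|/2,\,|y|<1}|x-y|^{2m-n}f(y)\,dy$ — resp.\ the principal term $A\int_{|y-x|<|x|/2,\,|y|<1}\log\frac{|x|}{|x-y|}\,f(y)\,dy$ in \eqref{eq2.11}, which is nonnegative because $\frac{|x|}{|x-y|}>2$ on that region — is $\ge0$. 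Since (from the proof of Lemma~\ref{lem2.2}) the error term there is bounded by $C\bigl(\int_{|y|<1}|y|^{2m-2}f\bigr)|x|^{2-n}$ for all $x\ne0$ with $C=C(m,n)$, we obtain $N(x)\ge-C\bigl(\int_{|y|<1}|y|^{2m-2}f\bigr)|x|^{2-n}$ on $\bb R^n\backslash\{0\}$, and choosing $c$ larger than this constant makes $u>0$ everywhere. For the lower bound, if $x\in B_{r_j}(x_j)$ then (using $r_j\le|x_j|/5$) $B_{r_j}(x_j)\subseteq\{y:|y-x|<|x|/2,\ |y|<1\}$, so discarding every bump except the $j$-th together with the nonnegative term $c|x|^{2-n}$ gives $u(x)\ge A\int_{B_{r_j}(x_j)}|x-y|^{2m-n}f_j(y)\,dy$ when $2m<n$ and the analogous logarithmic integral when $2m=n$. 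On $B_{r_j}(x_j)$ one has $|x-y|<2r_j$, $|x|>\tfrac45|x_j|$, $\int f_j\ge\omega_n2^{-n}\vp_j/|x_j|^{2m-2}$, and — when $2m=n$ — $\log\frac{|x|}{|x-y|}\ge\log\frac{2|x_j|}{5r_j}\ge\frac{\log2}{\log5}\log\frac{|x_j|}{r_j}$ because $|x_j|/r_j\ge5$; substituting produces \eqref{eq2.30} with a positive constant $A$ depending only on $m$ and $n$.

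I expect the main obstacle to be reconciling positivity of $u$ with the sharp lower bound \eqref{eq2.30}: $N$ itself need not be sign-definite, so the correction added to it must be simultaneously polyharmonic, nonnegative (so as not to spoil \eqref{eq2.30}), and large enough to dominate $N$ from below on all of $\bb R^n\backslash\{0\}$. That $c|x|^{2-n}$ achieves this relies on the uniform-in-$x$ nature of the error term $O(|x|^{2-n})$ in Lemma~\ref{lem2.2} together with the positivity of $-\Phi$ — which is precisely where the assumptions that $m$ is odd and $2m\le n$ are used; the remaining steps are routine estimates. (Note that $c$ depends on the prescribed data through $\int_{|y|<1}|y|^{2m-2}f$, whereas the constant $A$ in \eqref{eq2.30} does not.)
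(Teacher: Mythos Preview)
Your proof is correct and follows essentially the same approach as the paper: build $f$ as a sum of rescaled smooth bumps supported in the disjoint balls $B_{r_j}(x_j)$, define $N$ by \eqref{eq2.9}, set $u=N+c|x|^{2-n}$, and read off \eqref{eq2.28}--\eqref{eq2.30} from Lemma~\ref{lem2.2} together with the positivity of $-\Phi$ when $m$ is odd and $2m\le n$. The only differences from the paper's argument are cosmetic (you normalize your bump via $\chi_j\equiv1$ on $B_{r_j/2}(x_j)$ and bound $|x-y|$ by $2r_j$ directly, whereas the paper rescales a fixed bump $\varphi$ and writes the local integrals in the variable $\eta=(y-x_j)/r_j$), and your explicit observation that the $O(|x|^{2-n})$ error in Lemma~\ref{lem2.2} carries the constant $C\int|y|^{2m-2}f$ is exactly what the paper uses implicitly when choosing $C$ in $u=N+C|x|^{-(n-2)}$.
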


\begin{proof}
Let $\varphi\colon {\bb R}^n\to [0,1]$ be a $C^\infty$ function whose support is $\ovl{B_1(0)}$. Define $\varphi_j\colon {\bb R}^n\to [0,1]$ by 
\begin{equation}\label{eq2.31}
 \varphi_j(y) = \varphi(\eta)\quad \text{where}\quad y=x_j+r_j\eta.
\end{equation}
Then
\begin{equation}\label{eq2.32}
 \intl_{{\bb R}^n} \varphi_j(y)\,dy = \intl_{{\bb R}^n} \varphi(\eta) r^n_j\,d\eta = r^n_jI
\end{equation}
where $I = \intl_{{\bb R}^n} \varphi(\eta)\,d\eta >0$. Let
\begin{equation}\label{eq2.33}
 f = \sum^\infty_{j=1} M_j\varphi_j \quad \text{where}\quad M_j = \frac{\vp_j}{|x_j|^{2m-2}r^n_j}.
\end{equation}
Since the functions $\varphi_j$ have disjoint supports, $f\in C^\infty({\bb R}^n\backslash\{0\})$ and by \eqref{eq2.27}, \eqref{eq2.32}, \eqref{eq2.33}, and \eqref{eq2.26} we have
\begin{align}
 \intl_{{\bb R}^n} |y|^{2m-2} f(y)\, dy &= \sum^\infty_{j=1} M_j \intl_{|y-x_j|<r_j} |y|^{2m-2} \varphi_j(y)\,dy\notag\\
&\le 2^{2m-2} I \sum^\infty_{j=1} M_j|x_j|^{2m-2} r^n_j\notag\\
\label{eq2.34}
&= 2^{2m-2} I \sum^\infty_{j=1} \vp_j<\infty.
\end{align}

Using the fact that
\begin{equation}\label{eq2.35}
 |x-x_j| < r_j \le |x_j|/5\quad \text{implies}\quad B_{r_j}(x_j) \subset B_{\frac{|x|}2} (x),
\end{equation}
we have for $2m<n$, $x=x_j+r_j\xi$, and $|\xi|<1$ that 
\begin{align*}
 \intl_{|y-x|<|x|/2} \frac1{|x-y|^{n-2m}} f(y)\,dy &\ge \intl_{|y-x_j|<r_j} \frac1{|x-y|^{n-2m}} M_j\varphi_j(y)\,dy\\
&= \intl_{|\eta|<1} \frac1{r^{n-2m}_j} \frac{M_j}{|\xi-\eta|^{n-2m}} \varphi(\eta) r^n_j\, d\eta\\
&= \frac{\vp_j}{|x_j|^{2m-2}r^{n-2m}_j} \intl_{|\eta|<1} \frac{\varphi(\eta)}{|\xi-\eta|^{n-2m}} d\eta\\
&\ge \frac{J\vp_j}{|x_j|^{2m-2}r^{n-2m}_j} \quad \text{where}\quad J = \min_{|\xi|\le 1} \intl_{|\eta|<1} \frac{\varphi(\eta)\,d\eta}{|\xi-\eta|^{n-2m}}.
\end{align*}
Similarly, using \eqref{eq2.35} we have for $2m=n$, $x=x_j+r_j\xi$, and $|\xi|<1$ that
\begin{align*}
 \intl_{|y-x|<|x|/2} \left(\log \frac{|x|}{|x-y|}\right) f(y)\,dy &\ge \intl_{|y-x_j|<r_j} \left(\log \frac{|x|}{|x-y|}\right) M_j \varphi_j(y)\,dy\\
&\ge \intl_{|\eta|<1} \left(\log \frac{\frac45 |x_j|}{r_j|\xi-\eta|}\right) M_j\varphi(\eta) r^n_j\, d\eta\\
&= \frac{\vp_j}{|x_j|^{n-2}} \intl_{|\eta|<1} \left(\log \frac2{|\xi-\eta|} + \log \frac{|x_j|}{r_j} - \log \frac52\right) \varphi(\eta)\,d\eta\\
&\ge \frac{I\vp_j}{|x_j|^{n-2}} \log \frac{|x_j|}{r_j} - \frac{I}{|x_j|^{n-2}} \log \frac52.
\end{align*}
Thus defining $N$ by \eqref{eq2.9}, where $f$ is given by \eqref{eq2.33},
 it follows from \eqref{eq2.34} and Lemma~\ref{lem2.2} that there exists a positive constant $C$ independent of $\xi$ and $j$ such that if we define $u\colon {\bb R}^n\backslash\{0\}\to {\bb R}$ by
\[
 u(x) = N(x) + C|x|^{-(n-2)}
\]
then $u$ is a $C^\infty$ positive solution of
\begin{equation}\label{eq2.36}
 -\Delta^m u = f\quad \text{in}\quad {\bb R}^n\backslash\{0\}
\end{equation}
and for some positive constant $A=A(m,n)$, $u$ satisfies \eqref{eq2.30}.

Also, \eqref{eq2.36} and \eqref{eq2.33} imply that $u$ satisfies \eqref{eq2.28} and \eqref{eq2.29}.
\end{proof}

\begin{rem}\label{rem2.1}
Suppose the hypotheses of Lemma~\ref{lem2.4} hold and $u$ is as in Lemma~\ref{lem2.4}.
\end{rem}

\n {\bf Case 1.} Suppose $2m<n$. Then it follows from \eqref{eq2.28}, \eqref{eq2.29}, and \eqref{eq2.30} that $u$ is a $C^\infty$ positive solution of
\[
 0 \le -\Delta^m u \le |x|^\tau u^\lambda\quad \text{in}\quad {\bb
   R}^n\backslash\{0\},\quad \lambda>0,\quad \tau\in {\bb R},
\]
provided
\[
 \frac{\psi(|x_j|)}{|x_j|^{2m-2}r^n_j} \le 2^{-|\tau|}|x_j|^\tau
\left(\frac{A\psi(|x_j|)}{|x_j|^{2m-2} r^{n-2m}_j}\right)^\lambda
\]
which holds if and only if
\begin{equation}\label{eq2.36-2}
 r_j^{n-\lambda(n-2m)}\ge\frac{2^{|\tau|}}{A^\lambda}
\frac{|x_j|^{(\lambda-1)(2m-2)-\tau}}{\psi(|x_j|)^{\lambda-1}}.
\end{equation}

\n {\bf Case 2.} Suppose $2m=n$. Then it follows from \eqref{eq2.28}, \eqref{eq2.29}, and \eqref{eq2.30} that $u$ is a $C^\infty$ positive solution of
\[
 0 \le -\Delta^mu \le f(u)\quad \text{in}\quad {\bb R}^n\backslash\{0\},
\]
where $f\colon [0,\infty)\to [0,\infty)$ is a nondecreasing continuous function, provided
\begin{equation}\label{eq2.37}
 \frac{\psi(|x_j|)}{|x_j|^{n-2}r^n_j} \le f\left(\frac{A\psi(|x_j|)}{|x_j|^{n-2}} \log \frac{|x_j|}{r_j}\right).
\end{equation}
If $f(u) = u^\lambda, \lambda>1$, then \eqref{eq2.37} holds if and only if
\[
 \log \frac{|x_j|}{r_j} \ge \left(\frac{|x_j|}{r_j}\right)^{\frac{n}\lambda} \frac{|x_j|^a}{A\psi(|x_j|)^{\frac{\lambda-1}\lambda}} \quad \text{where}\quad a = \frac{(n-2)(\lambda-1)-n}\lambda.
\]
If $f(u) = e^{u^\lambda}$, $\lambda>0$, then \eqref{eq2.37} holds if and only if
\[
 \log \frac{\psi(|x_j|)}{|x_j|^{2n-2}} + n \log \frac{|x_j|}{r_j} \le \left(\frac{A\psi(|x_j|)}{|x_j|^{n-2}} \log \frac{|x_j|}{r_j}\right)^\lambda.
\]

\begin{lem}\label{lem2.5}
  Suppose $p>1$ and $R\in (0,2)$ are constants and $g\colon {\bb R}^n\to {\bb
    R}$ is defined by
\[
 g(\xi) = \intl_{|\eta|<R} \left(\log \frac5{|\xi-\eta|}\right) f(\eta)\,d\eta
\]
where $f\in L^1(B_R(0))$, (resp. $f\in L^p(B_R(0))$) . Then
\[
 \|g\|_{L^p(B_R(0))} \le C\|f\|_{L^1(B_R(0))}, \quad 
(\text{resp. }\|g\|_{L^\infty(B_R(0))} \le C\|f\|_{L^p(B_R(0))} ),
\]
where $C=C(n,p,R)$ is a positive constant.
\end{lem}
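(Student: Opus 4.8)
The plan is to view $g$ as an integral operator with the nonnegative kernel $K(\xi,\eta)=\log\frac{5}{|\xi-\eta|}$ and to deduce both bounds from the single elementary fact that, since $R<2$, this kernel has only a logarithmic singularity on $B_R(0)\times B_R(0)$ and hence lies, uniformly in one variable, in every $L^q$ over the other for $q<\infty$.

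First I would record the kernel estimate. Since $f$ is supported in $B_R(0)$ and $g$ is evaluated only on $B_R(0)$, the relevant differences satisfy $|\xi-\eta|<2R<4<5$, so $K(\xi,\eta)=\log\frac{5}{|\xi-\eta|}>0$ and its only singularity is at $\xi=\eta$; consequently, for every finite $q$,
\[
\sup_{\xi\in B_R(0)}\int_{B_R(0)}K(\xi,\eta)^q\,d\eta\ \le\ \int_{|z|<2R}\Big(\log\frac{5}{|z|}\Big)^q dz\ =:\ C_q(n,R)<\infty,
\]
and by symmetry the same bound holds with $\xi$ and $\eta$ interchanged. (Equivalently, $g$ is the convolution of $f\mathbf{1}_{B_R(0)}$ with the function $z\mapsto\log\frac{5}{|z|}\,\mathbf{1}_{\{|z|<2R\}}$, which belongs to $L^q(\mathbb{R}^n)$ for every finite $q$, so both assertions also follow at once from Young's convolution inequality; I will present the self-contained version.)

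For the first assertion, where $f\in L^1(B_R(0))$, I would apply Minkowski's integral inequality to $g(\xi)=\int_{B_R(0)}K(\xi,\eta)f(\eta)\,d\eta$, obtaining
\[
\|g\|_{L^p(B_R(0))}\le\int_{B_R(0)}\|K(\cdot,\eta)\|_{L^p(B_R(0))}\,|f(\eta)|\,d\eta\le C_p(n,R)^{1/p}\,\|f\|_{L^1(B_R(0))}.
\]
For the second assertion, where $f\in L^p(B_R(0))$ with $p>1$, I would let $p'=p/(p-1)<\infty$ be the conjugate exponent and apply Hölder's inequality in $\eta$ for each fixed $\xi$,
\[
|g(\xi)|\le\int_{B_R(0)}K(\xi,\eta)\,|f(\eta)|\,d\eta\le\|K(\xi,\cdot)\|_{L^{p'}(B_R(0))}\,\|f\|_{L^p(B_R(0))}\le C_{p'}(n,R)^{1/p'}\,\|f\|_{L^p(B_R(0))},
\]
and then take the supremum over $\xi\in B_R(0)$.

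I do not expect a genuine obstacle here. The one point requiring a little care is the kernel estimate above: the positivity of $\log\frac{5}{|z|}$ on $\{|z|<2R\}$ is exactly where the hypothesis $R<2$ is used, while the $L^q$-integrability of a logarithm near the origin holds in every dimension. Note also that the hypothesis $p>1$ is needed only for the second assertion, to ensure $p'<\infty$; the $L^1\to L^p$ estimate is in fact valid for every $p\in[1,\infty)$.
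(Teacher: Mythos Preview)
Your proof is correct. Both assertions follow, as you say, from the uniform $L^q$-integrability of the logarithmic kernel on $B_R(0)$ for every finite $q$, and your use of the hypothesis $R<2$ to ensure positivity of the kernel is exactly right.

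The paper's argument for the second assertion is the same as yours (H\"older in $\eta$). For the first assertion the paper takes a slightly different route: instead of Minkowski's integral inequality, it splits $|f(\eta)|=|f(\eta)|^{1/p}|f(\eta)|^{1/p'}$, applies H\"older in $\eta$ to bound $|g(\xi)|^p$ by $\big(\int(\log\frac{5}{|\xi-\eta|})^p|f(\eta)|\,d\eta\big)\|f\|_{L^1}^{p/p'}$, and then integrates in $\xi$ and applies Fubini. This is the standard ``interpolation'' proof one sees for Young-type inequalities or Schur's test; your Minkowski argument is at least as clean and arguably more transparent, since it isolates the single kernel bound $\sup_\eta\|K(\cdot,\eta)\|_{L^p}<\infty$ as the only analytic input. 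Both approaches yield the same constant $C_p(n,R)^{1/p}$ and neither offers a real advantage over the other here.
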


\begin{proof}
Define $p'$ by $\frac1p + \frac1{p'} = 1$. Then by H\"older's inequality we have 
\begin{align*}
\intl_{|\xi|<R} |g(\xi)|^p \,d\xi &\le \intl_{|\xi|<R} \left[\,
  \intl_{|\eta|<R} \left(\log \frac5{|\xi-\eta|}\right)
  |f(\eta)|^{1/p} |f(\eta)|^{1/p'} \,d\eta\right]^p d\xi\notag\\
&\le \intl_{|\xi|<R} \left[\left(\, \intl_{|\eta|<R} \left(\log \frac5{|\xi-\eta|}\right)^p |f(\eta)|\,d\eta\right)^{1/p} \left(\, \intl_{|\eta|<R} |f(\eta)|\,d\eta\right)^{1/p'}\right]^p d\xi\notag\\
&= \left(\, \intl_{|\eta|<R} |f(\eta)|\,d\eta\right)^{p/p'} \intl_{|\eta|<R} \left(\, \intl_{|\xi|<R} \left(\log \frac5{|\xi-\eta|} \right)^p d\xi\right) |f(\eta)|\,d\eta\notag\\
&\le C(n,p,R) \left(\, \intl_{|\eta|<R} |f(\eta)|\,d\eta\right)^p.
\end{align*}
The parenthetical part follows from H\"older's inequality.
\end{proof}

\section{Proofs when the singularity is at the origin}\label{sec3}

\indent 

In this section we prove Theorems~\ref{thm1.4}--\ref{thm1.11} which
deal with the case that the singularity is at the origin. By
scaling and translating
$u$ in Theorem \ref{thm1.4} and using for $a$ in Theorem
\ref{thm1.4} the expression \eqref{eq1.8-2}, we see that
the following theorem implies Theorem~\ref{thm1.4}.

\begin{thm}\label{thm3.1}
Suppose $u(x)$ is a $C^{2m}$ nonnegative solution of
\begin{equation}\label{eq3.1}
 0 \le -\Delta^m u\le \sum^{2m-1}_{k=0} (|D^ku|+g_k)^{\lambda_k} \quad\text{in}\quad B_2(0)\backslash\{0\} \subset {\bb R}^n
\end{equation}
where $m\ge 2$ and $n\ge 2$ are integers, $m$ is odd, $2m<n$,
\begin{equation}\label{eq3.2}
 \lambda_k < \frac{n}{n-2m+k}
\end{equation}
and $g_k\colon B_2(0)\backslash\{0\}\to [1,\infty)$ is a continuous function. Let
\begin{equation}\label{eq3.3}
 b = \max\left\{0, \max_{0\le k\le 2m-1} \frac{\lambda_k(n-2+k) - (2m+n-2)}{n-\lambda_k(n-2m+k)}\right\}.
\end{equation}
\begin{itemize}
 \item[\rm (i)] If $b=0$ $($i.e. $\lambda_k-1 \le \frac{2m-k}{n-2+k}$ for all $k\in \{0,1,2,\ldots, 2m-1\})$ and
\begin{equation}\label{eq3.4}
 g_k(x) = O(|x|^{-(n-2+k)}) \quad \text{as}\quad x\to 0
\end{equation}
then for $i=0,1,\ldots, 2m-1$ we have
\[
 |D^iu(x)| = O(|x|^{-(n-2+i)})\quad \text{as}\quad x\to 0.
\]
\item[\rm (ii)] If $b>0$ $($i.e. $\frac{2m-k_0}{n-2+k_0} < \lambda_{k_0}-1 < \frac{2m-k_0}{n-2m+k_0}$ for some $k_0 \in \{0,1,\ldots, 2m-1\})$ and
\begin{equation}\label{eq3.5}
 g_k(x) = o(|x|^{-a(k)})\quad \text{as}\quad x\to 0
\end{equation}
where
\[
 a(i) = (n-2m+i)b + (n-2+i)
\]
then for $i=0,1,\ldots, 2m-1$ we have
\begin{equation}\label{eq3.5-2}
 |D^iu(x)| = o(|x|^{-a(i)})\quad \text{as}\quad x\to 0.
\end{equation} 
\end{itemize}
\end{thm}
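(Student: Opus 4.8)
The plan is to reduce everything to the representation formula of Lemma~\ref{lem2.1} and then run a rescaling argument driven by Lemma~\ref{lem2.3} and Riesz‑potential $L^p$‑estimates. Write $f=-\Delta^mu\ge0$; by Lemma~\ref{lem2.1}, $\intl_{|y|<1}|y|^{2m-2}f(y)\,dy<\infty$, and differentiating the representation formula and invoking Lemma~\ref{lem2.2} gives, for $0\le i<2m$,
\[
|D^iu(x)|\le A\intl_{|y-x|<|x|/2}|x-y|^{2m-n-i}f(y)\,dy+O(|x|^{2-n-i})\qquad\text{as }x\to0,
\]
the error terms being harmless since $a(i)\ge n-2+i$ in both cases. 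A naive pointwise iteration of this estimate does not terminate when $b>0$ (each $\lambda_k>1$ makes the Riesz map expansive), so a blow‑up argument is needed; the same argument, suitably calibrated, also handles part~(i).

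\textbf{Set‑up of the blow‑up.} Suppose \eqref{eq3.5-2} fails: there are $i_0\in\{0,\dots,2m-1\}$ and $x_j\to0$ with $|x_j|^{a(i_0)}|D^{i_0}u(x_j)|\ge\delta_0>0$ for all $j$; pass to a subsequence with $0<4|x_{j+1}|\le|x_j|\le1/2$. In case~(ii) put $r_j=|x_j|^{b+1}$ (so $0<r_j\le|x_j|/4$ for large $j$, since $b>0$); in case~(i) put $r_j=|x_j|/4$. Apply Lemma~\ref{lem2.3}, define $f_j$ by \eqref{eq2.15}, and set $U_j^{(\beta)}(\xi)=(r_j/|x_j|)^{n-2m+|\beta|}|x_j|^{n-2+|\beta|}|(D^\beta u)(x_j+r_j\xi)|$ and $V_j^{(k)}=\max_{|\beta|=k}U_j^{(\beta)}$. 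By \eqref{eq2.17}, $V_j^{(k)}(\xi)\le C(r_j/|x_j|)^{n-2m+k}+\vp_j+A\intl_{|\eta|<2}|\xi-\eta|^{-(n-2m+k)}f_j(\eta)\,d\eta$; in case~(ii) the first term is $C|x_j|^{b(n-2m+k)}\to0$, so $V_j^{(k)}\le o(1)+I_{2m-k}f_j$ (here $I_\alpha$ is the Riesz potential of order $\alpha$). Since $a(i)-(n-2+i)=(n-2m+i)b$ and $r_j/|x_j|=|x_j|^b$, the quantity $V_j^{(i_0)}(0)$ is comparable to $|x_j|^{a(i_0)}|D^{i_0}u(x_j)|$ (in case~(i), to $|x_j|^{n-2+i_0}|D^{i_0}u(x_j)|$), hence $\liminf_jV_j^{(i_0)}(0)>0$ — this is the inequality to be contradicted.

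\textbf{Transporting the differential inequality and bootstrapping.} Substituting $0\le f\le\sum_k(|D^ku|+g_k)^{\lambda_k}$ and $|D^ku(x_j+r_j\eta)|\simeq(r_j/|x_j|)^{-(n-2m+k)}|x_j|^{-(n-2+k)}V_j^{(k)}(\eta)$ into \eqref{eq2.15} produces, for $|\eta|<2$,
\[
f_j(\eta)\le C\sum_k|x_j|^{E_k}\big(V_j^{(k)}(\eta)\big)^{\lambda_k}+o(1),\qquad E_k=(2m-2)(1-\lambda_k)+(b+1)\big(n-\lambda_k(n-2m+k)\big),
\]
and the defining maximum in \eqref{eq3.3}, which also gives $b+1=(2m-2)(\lambda_{k_0}-1)/(n-\lambda_{k_0}(n-2m+k_0))$ at the maximising index $k_0$, forces $E_k\ge0$ (equality at $k_0$); the hypothesis \eqref{eq3.5}, $g_k=o(|x|^{-a(k)})$, makes each $g_k$‑contribution genuinely $o(1)$ — this is exactly why part~(ii) requires ``$o$'' on the $g_k$ while part~(i) needs only ``$O$''. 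Hence $f_j\le C\sum_k(I_{2m-k}f_j)^{\lambda_k}+C$ on $B_1$. Since \eqref{eq2.16} gives $\|f_j\|_{L^1}\to0$, a finite Riesz bootstrap (\cite[Lemma~7.12]{GT}; cf.\ Lemma~\ref{lem2.5}) now improves the integrability of $f_j$: one starts from a uniform bound in $L^{s_0}$ with $s_0$ just below $\min_k\tfrac{n}{\lambda_k(n-2m+k)}$, which exceeds $1$ precisely by \eqref{eq3.2}, and the integrability exponents follow the iteration $\tfrac1s\mapsto\max_k\lambda_k\!\left(\tfrac1s-\tfrac{2m-k}{n}\right)$; \eqref{eq3.2} is exactly the condition that this map stays strictly below the identity on the range in question, so in finitely many steps one reaches a uniform bound $\|f_j\|_{L^\infty(B_1)}\le C$, and then $f_j\to0$ in every $L^p$. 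Consequently each $I_{2m-k}f_j\to0$ uniformly on $|\xi|\le\tfrac12$ (the kernel $|\eta|^{-(n-2m+k)}$ is integrable as $k<2m$), so $V_j^{(i_0)}(0)\to0$, contradicting $\liminf_jV_j^{(i_0)}(0)>0$. As $\{x_j\}$ was arbitrary this proves \eqref{eq3.5-2}; the same computation with $r_j=|x_j|/4$, where $r_j/|x_j|$ is constant and the analogue of $E_k$ is again nonnegative because $b=0$, proves part~(i) (now yielding ``$O$'' rather than ``$o$'', in accordance with \eqref{eq3.4}).

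\textbf{Expected main obstacle.} The technical heart is the rescaled $L^p$‑bootstrap: making all constants uniform in $j$, carrying the coupled system of $2m$ derivative estimates through $f_j$, verifying that \eqref{eq3.2} is precisely what forces the iteration to terminate, and reconciling the region $|\xi|<1$ of \eqref{eq2.17} with the region $|\eta|<2$ on which $f_j$ is controlled (this costs a harmless change of radii). The structural choice $r_j=|x_j|^{b+1}$ — dictated by requiring every coefficient $|x_j|^{E_k}$ to remain $\le1$, with equality at the maximising index — is what pins the exponent $a(i)$ in \eqref{eq3.5-2} to the data.
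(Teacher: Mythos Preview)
Your proposal is correct and follows essentially the same route as the paper: contradiction via a sequence $x_j$, the rescaling $r_j\sim|x_j|^{b+1}$, Lemma~\ref{lem2.3} to get \eqref{eq2.17}, the exponent computation showing each coefficient $|x_j|^{E_k}\le1$ (your $E_k$ agrees with the paper's exponent $(2m+n-2)-\lambda_k(n-2+k)+b(n-\lambda_k(n-2m+k))$), and a Riesz--potential $L^p$ bootstrap started from \eqref{eq2.16}. The paper unifies cases (i) and (ii) by taking $r_j=|x_j|^{b+1}/4$ throughout and carrying a ``resp.'' through the whole argument; it also first normalizes to $\lambda_k\ge1$, which slightly streamlines the convexity bounds, and it keeps the $\vp_j$ (rather than a generic $C$) through the rescaled inequality so that the bootstrap yields $f_j\to0$ in $L^\infty$ directly---you instead first obtain a uniform $L^\infty$ bound and then recover $I_{2m-k}f_j\to0$ by interpolation with $\|f_j\|_{L^1}\to0$, which also works.
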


\begin{rem}\label{rem3.1}
By making only very minor changes in the proof of Theorem \ref{thm3.1}
below, one can easily verify that part (ii) of Theorem \ref{thm3.1}
remains true if one replaces ``little oh'' in \eqref{eq3.5} and
\eqref{eq3.5-2} with ``big oh''.
\end{rem}

\begin{proof}[Proof of Theorem \ref{thm3.1}]
 Since increasing to one those $\lambda_k$ which are less than 1 will not change the value of $b$ but will increase the right side of the second inequality in \eqref{eq3.1}, we can, without loss of generality, assume, instead of \eqref{eq3.2}, the stronger condition that
\begin{equation}\label{eq3.6}
 1\le \lambda_k < \frac{n}{n-2m+k}.
\end{equation}

Let $b$ and $g_k$ be as in part (i) (resp.\ part (ii)) of Theorem~\ref{thm3.1}. Suppose for contradiction that part (i) (resp.\ part (ii)) is false. Then there exist $i\in \{0,1,2,\ldots, 2m-1\}$ and a sequence $\{x_j\}^\infty_{j=1} \subset {\bb R}^n$ such that
\[
 0 < 4|x_{j+1}| < |x_j| < 1/2,
\]
and
\begin{align}\label{eq3.7}
 &|x_j|^{n-2+i} |D^iu(x_j)|\to \infty\quad \text{as}\quad j\to\infty,\\
\label{eq3.8}
&(\text{resp. } \liminf_{j\to\infty} |x_j|^{a(i)} |D^iu(x_j)|>0).
\end{align}
Let
\[
 r_j =  \frac{|x_j|^{b+1}}4.
\]
Then $x_j$ and $r_j$ satisfy \eqref{eq2.14}. Let $f_j$ be as in Lemma~\ref{lem2.3}. Since
\begin{equation}\label{eq3.9}
 \frac{r_j}{|x_j|} = \frac{|x_j|^b}{4}
\end{equation}
it follows from \eqref{eq2.17} with $|\beta| = i$ and $\xi=0$ that
\[
 \frac{|x_j|^{(n-2m+i)b+(n-2+i)}}{4^{n-2m+i}} |D^iu(x_j)| \le C|x_j|^{(n-2m+i)b} + \vp_j + \intl_{|\eta|<2} \frac{Af_j(\eta)d\eta}{|\eta|^{n-2m+i}}.
\]
Hence \eqref{eq3.7} (resp.\ \eqref{eq3.8}) implies
\begin{align}\label{eq3.10}
 &\intl_{|\eta|<2} \frac{f_j(\eta)d\eta}{|\eta|^{n-2m+i}} \to \infty\quad \text{as}\quad j\to\infty\\
\label{eq3.11}
&\left(\text{resp. } \liminf_{j\to\infty} \intl_{|\eta|<2} \frac{f_j(\eta)d\eta}{|\eta|^{n-2m+i}}>0\right).
\end{align}

On the other hand, \eqref{eq2.15}, \eqref{eq3.1}, and \eqref{eq2.17} imply for $|\xi|<1$ that
\begin{align}
 f_j(\xi) &\le |x_j|^{2m+n-2} \left(\frac{r_j}{|x_j|}\right)^n \sum^{2m-1}_{k=0} (g_k(x_j+r_j\xi) + |D^ku(x_j+r_j\xi)|)^{\lambda_k}\notag\\
&\le \sum^{2m-1}_{k=0} \frac{|x_j|^{2m+n-2} \left(\frac{r_j}{|x_j|}\right)^n}{\left(|x_j|^{n-2+k} \left(\frac{r_j}{|x_j|}\right)^{n-2m+k}\right)^{\lambda_k}} \bigg(C\left(\frac{r_j}{|x_j|}\right)^{n-2m+k} + \vp_j + \intl_{|\eta|<2} \frac{Af_j(\eta)d\eta}{|\xi-\eta|^{n-2m+k}}\notag\\
\label{eq3.12}
&\quad +|x_j|^{n-2+k} \left(\frac{r_j}{|x_j|}\right)^{n-2m+k} g_k(x_j+r_j\xi)\bigg)^{\lambda_k}.
\end{align}
But \eqref{eq3.9} and \eqref{eq3.3} imply
\begin{align*}
 \frac{(|x_j|^{2m+n-2} \left(\frac{r_j}{|x_j|}\right)^n}{\left(|x_j|^{n-2+k} \left(\frac{r_j}{|x_j|}\right)^{n-2m+k} \right)^{\lambda_k}} &= |x_j|^{(2m+n-2)-\lambda_k(n-2+k)} \left(\frac{r_j}{|x_j|}\right)^{n-(n-2m+k)\lambda_k}\\
&\le |x_j|^{(2m+n-2)-\lambda_k(n-2+k) + (n-(n-2m+k)\lambda_k)b}\\
&\le 1,\\
|x_j|^{n-2+k} \left(\frac{r_j}{|x_j|}\right)^{n-2m+k} &\le |x_j|^{(n-2m+k)b+(n-2+k)}\\
&= |x_j|^{n-2+k}, \quad (\text{resp. } |x_j|^{a(k)}),\\
\intertext{and}
\left(\frac{r_j}{|x_j|}\right)^{n-2m+k} &\le |x_j|^{(n-2m+k)b}.
\end{align*}
Hence by \eqref{eq3.4}, (resp. \eqref{eq3.5}) and \eqref{eq3.12} we have
\begin{align}\label{eq3.13}
 f_j(\xi) &\le \sum^{2m-1}_{k=0} \bigg(C + \intl_{|\eta|<2} \frac{Af_j(\eta)d\eta}{|\xi-\eta|^{n-2m+k}}\bigg)^{\lambda_k} \quad \text{for}\quad |\xi|<1,\\
\label{eq3.14}
\bigg(\text{resp. } f_j(\xi) &\le \sum^{2m-1}_{k=0} \bigg(\vp_j +\intl_{|\eta|<2} \frac{Af_j(\eta)d\eta}{|\xi-\eta|^{n-2m+k}}\bigg)^{\lambda_k}\quad \text{for}\quad |\xi|<1\bigg).
\end{align}
Since
\[
 \intl_{2R\le |\eta|<2} \frac{Af_j(\eta)d\eta}{|\xi-\eta|^{n-2m+k}} \le \frac{A}{R^{n-2m+k}} \intl_{|\eta|<2} f_j(\eta)d\eta \quad \text{for}\quad |\xi|<R<1
\]
we have by \eqref{eq3.13}, (resp.\ \eqref{eq3.14}), and \eqref{eq2.16} that 
\[
 f_j(\xi) \le \sum^{2m-1}_{k=0} \left(\frac{C}{R^{n-2m+k}} + \intl_{|\eta|<2R} \frac{Af_j(\eta)d\eta}{|\xi-\eta|^{n-2m+k}} \right)^{\lambda_k} \quad \text{for}\quad |\xi|<R\le 1
\]
where $C$ is independent of $\xi,j$, and $R$, (resp.
\[
 f_j(\xi) \le \sum^{2m-1}_{k=0} \left(\frac{\vp_j}{R^{n-2m+k}} + \intl_{|\eta|<2R} \frac{Af_j(\eta)d\eta}{|\xi-\eta|^{n-2m+k}}\right)^{\lambda_k}\quad \text{for}\quad |\xi|<R\le 1
\]
where $\vp_j$ is independent of $\xi$ and $R$ and $\vp_j\to 0$ as $j\to\infty$).

It therefore follows from Riesz potential estimates 
(see \cite[Lemma~7.12]{GT} that if the functions $f_j$ are bounded (resp.\ tend to zero) in $L^p(B_{2R}(0))$ for some $p\ge 1$ and $R\in (0,1]$ then the functions $f_j$ are bounded (resp.\ tend to zero) in $L^q(B_R(0))$ for $1\le q\le \infty$ and 
\[
\frac1p-\frac1q < \min\limits_{0\le k\le 2m-1} \frac{n-\lambda_k(n-2m+k)}{n} > 0 \quad \text{by \eqref{eq3.6}.}
\]
So starting with \eqref{eq2.16} and iterating this fact a finite number of times we see that there exists $R_0\in (0,1)$ such that the functions $f_j$ are bounded (resp.\ tend to zero) in $L^\infty(B_{R_0}(0))$ which together with \eqref{eq2.16} contradicts \eqref{eq3.10} (resp.\ \eqref{eq3.11}) and thereby completes the proof of Theorem~\ref{thm3.1}.
\end{proof}

\begin{proof}[Proof of Theorem \ref{thm1.5}]
Define $\psi\colon (0,1)\to (0,1)$ by
\begin{equation}\label{eq3.15}
\psi(r) = \max\left\{\varphi(r)^p, r^{\frac{n-\lambda(n-2m)}{\lambda-1}\frac{b}2}\right\}
\end{equation}
where
\[
b :=  \frac{\lambda(n-2) -(2m+n-2)}{n-\lambda(n-2m)}\quad \text{and}\quad p := \frac{n-\lambda(n-2m)}{4m}
\]
are positive by \eqref{eq1.9}. Let $\{x_j\}^\infty_{j=1} \subset {\bb
  R}^n$ be a sequence satisfying \eqref{eq2.25} and
\eqref{eq2.26}. Define $r_j>0$ by \eqref{eq2.36-2} with the greater than sign
replaced with an equal sign and with $\tau=0$. Then by \eqref{eq3.15}
\begin{align}\label{eq3.16}
r_j &= A^{\frac{-\lambda}{n-\lambda(n-2m)}} \frac{|x_j|^{1+b}}{\psi(|x_j|)^{\frac{\lambda-1}{n-\lambda(n-2m)}}}\\
&\le A^{\frac{-\lambda}{n-\lambda(n-2m)}} |x_j|^{1+b/2}.\notag
\end{align}
Thus by taking a subsequence of $j$, $r_j$ will satisfy \eqref{eq2.27}.

Let $u$ be as in Lemma~\ref{lem2.4}. Then by Case~1 of
Remark~\ref{rem2.1}, $u$ is a $C^\infty$ positive solution of
\eqref{eq1.10} and by \eqref{eq2.30}, \eqref{eq3.16}, \eqref{eq3.15},
and \eqref{eq1.8-2} we have
\begin{align*}
u(x_j) &\ge \frac{A\psi(|x_j|)}{|x_j|^{2m-2}} \frac{A^{\frac{\lambda(n-2m)}{n-\lambda(n-2m)}} \psi(|x_j|)^{\frac{(\lambda-1)(n-2m)}{n-\lambda(n-2m)}}}{|x_j|^{(n-2m)(1+b)}}\\
&= C(m,n,\lambda) \frac{\psi(|x_j|)^{\frac{2m}{n-\lambda(n-2m)}}}{|x_j|^{n-2+(n-2m)b}}\\
&\ge C(m,n,\lambda) \frac{\varphi(|x_j|)^{1/2}}{|x_j|^a}
\end{align*}
which implies \eqref{eq1.11}.
\end{proof}

\begin{proof}[Proof of Theorem \ref{thm1.6}]
Define $\psi\colon (0,1)\to (0,1)$ by $\psi(r) = r^{m-1}$. Let $\{x_j\}^\infty_{j=1} \subset {\bb R}^n$ be a sequence satisfying \eqref{eq2.25}, \eqref{eq2.26}, and
\begin{equation}\label{eq3.18}
\frac{1}{A^\lambda}\frac{|x_j|^{(\lambda-1)(2m-2)}}{\psi(|x_j|)^{\lambda-1}}
=A^{-\lambda}|x_j|^{(\lambda-1)(m-1)}<1
\end{equation}
where $A=A(m,n)$ is as in Lemma~\ref{lem2.4}. Let $\{r_j\}^\infty_{j=1} \subset {\bb R}$ be a sequence satisfying \eqref{eq2.27} and
\begin{equation}\label{eq3.19}
 \frac{A\psi(|x_j|)}{|x_j|^{2m-2}r^{n-2m}_j} > \varphi(|x_j|)^2.
\end{equation}
Since $r_j<1$ we see that \eqref{eq3.18} implies \eqref{eq2.36-2} with
$\tau=0$.
Let $u$ be as in Lemma~\ref{lem2.4}. Then by \eqref{eq2.30} and \eqref{eq3.19} 
\[
 \frac{u(x_j)}{\varphi(|x_j|)} \ge \varphi(|x_j|) \to \infty\quad \text{as}\quad j\to\infty
\]
and by Case~1 of Remark~\ref{rem2.1}, $u$ is a $C^\infty$ positive
solution of \eqref{eq1.12}.
\end{proof}

By scaling and translating $u$ in Theorem~\ref{thm1.7}, the following
theorem implies Theorem~\ref{thm1.7}.

\begin{thm}\label{thm3.2}
Suppose $u$ is a $C^{2m}$ nonnegative solution of
\begin{equation}\label{eq3.21}
 0 \le -\Delta^m u \le \sum^{2m-1}_{k=0} (|D^ku|+g_k)^{\lambda_k}\quad \text{in} \quad B_2(0)\backslash \{0\} \subset {\bb R}^n
\end{equation}
where $m\ge 2$ and $n\ge 2$ are integers, $m$ is odd, $2m=n$,
\begin{equation}\label{eq3.22}
 \lambda_0\in {\bb R},\quad \lambda_k <n/k \quad \text{for}\quad k=1,2,\ldots, n-1,
\end{equation}
and $g_k\colon B_2(0)\backslash\{0\} \to [1,\infty)$ is a continuous function. Let
\[
 b = \max\{0,b_0,b_1,\ldots, b_{n-1}\} \quad \text{where}\quad b_k = \frac{\lambda_k(n-2+k) - (2n-2)}{n-k\lambda_k}.
\]
\begin{itemize}
 \item[\rm (i)] If $b=0$ (i.e.\ $\lambda_k\le \frac{2n-2}{n-2+k}$ for all $k\in \{0,1,\ldots, n-1\}$) and for $k=0,1,\ldots, n-1$ we have
\begin{equation}\label{eq3.23}
 g_k(x) = O(|x|^{-(n-2+k)})\quad \text{as}\quad x\to 0
\end{equation}
then for $i=0,1,\ldots, n-1$ we have
\[
 |D^iu(x)| = O(|x|^{-(n-2+i)})\quad \text{as}\quad x\to 0.
\]
\item[\rm (ii)] If $b>0$ and as $x\to0$ we have
\begin{align}\label{eq3.24}
 g_0(x) &= o\left(|x|^{-(n-2)} \log \frac{5}{|x|}\right)\\
\intertext{and}
\label{eq3.25}
g_k(x) &= o(|x|^{-(n-2+k)} a(x)^{-k}) \quad \text{for}\quad k=1,2,\ldots, n-1\\
\intertext{where}
\label{eq3.26}
a(x) &= \min\left\{\frac{|x|^{b_0}}{\left(\log\frac5{|x|}\right)^{\lambda_0/n}}, |x|^{b_1},\ldots, |x|^{b_{n-1}}\right\}
\end{align}
then as $x\to 0$ we have
\[
 u(x) = o\left(|x|^{-(n-2)} \log \frac5{|x|}\right)
\]
and
\[
 |D^iu(x)| = o(|x|^{-(n-2+i)} a(x)^{-i})\quad \text{for}\quad i=1,2,\ldots, n-1.
\]
\end{itemize}
\end{thm}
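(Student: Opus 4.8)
The plan is to follow the proof of Theorem~\ref{thm3.1} almost verbatim, the only structural change being that the $k=0$ term on the right of \eqref{eq3.21} is handled with the logarithmic representation \eqref{eq2.18} and the logarithmic potential estimate Lemma~\ref{lem2.5} in place of the power representation \eqref{eq2.17} and the Riesz estimates of \cite[Lemma~7.12]{GT}. As in Theorem~\ref{thm3.1}, raising to $1$ any $\lambda_k<1$ changes neither $b$ nor the validity of \eqref{eq3.21} (since $g_k\ge1$), so I may assume $\lambda_0\ge1$ and $1\le\lambda_k<n/k$ for $1\le k\le n-1$. Suppose the conclusion of part~(i) (resp.\ part~(ii)) fails; then there are $i\in\{0,1,\dots,n-1\}$ and $x_j\to0$ with $0<4|x_{j+1}|<|x_j|<1/2$ such that, with $f=-\Delta^mu$ and $f_j$ as in Lemma~\ref{lem2.3}, in part~(i) $|x_j|^{n-2+i}|D^iu(x_j)|\to\infty$, while in part~(ii) $\liminf_j|x_j|^{n-2+i}a(x_j)^i|D^iu(x_j)|>0$ if $i\ge1$ and $\liminf_j|x_j|^{n-2}u(x_j)/\log\frac5{|x_j|}>0$ if $i=0$.

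I would put $r_j=|x_j|/4$ in part~(i) and $r_j=|x_j|\,a(x_j)/4$ in part~(ii); in both cases $(\{x_j\},\{r_j\})$ satisfies \eqref{eq2.14} for large $j$, and in part~(ii) $r_j/|x_j|=a(x_j)/4\to0$. Writing $L_j:=\log(|x_j|/r_j)$, the first step is to record that \eqref{eq3.26} makes $a$ slowly varying in part~(ii): since each entry of the minimum \eqref{eq3.26} exceeds $|x|^{b+\delta}$ for small $|x|$ (any fixed $\delta>0$; the $k=0$ entry does so because $\log\frac5{|x|}$ grows more slowly than any power of $1/|x|$) while some entry is $\le|x|^b$, one has $|x_j|^{b+\delta}\le a(x_j)\le|x_j|^b$, hence $a(x_j+r_j\xi)\asymp a(x_j)$ and $\log\frac5{|x_j+r_j\xi|}\asymp\log\frac5{|x_j|}\asymp L_j$ uniformly in $|\xi|<1$, and each $g_k(x_j+r_j\xi)$ obeys the decay hypothesis stated at $x_j$. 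The second step is the exponent identities: since $b_k=\frac{\lambda_k(n-2+k)-(2n-2)}{n-k\lambda_k}$ for $1\le k\le n-1$ and $nb_0=\lambda_0(n-2)-(2n-2)$, one has $(2n-2)-\lambda_0(n-2)+nb_0=0$ and $(2n-2)-\lambda_k(n-2+k)+b_k(n-k\lambda_k)=0$, and using $a(x_j)\le|x_j|^{b_k}$ (a minimand) and $L_j\asymp\log\frac5{|x_j|}$ these force the ``prefactors''
\[
 |x_j|^{(2n-2)-\lambda_0(n-2)}\Bigl(\tfrac{r_j}{|x_j|}\Bigr)^{n}L_j^{\lambda_0},\qquad
 |x_j|^{(2n-2)-\lambda_k(n-2+k)}\Bigl(\tfrac{r_j}{|x_j|}\Bigr)^{n-k\lambda_k}\quad(1\le k\le n-1)
\]
to stay bounded; in part~(i) the same prefactors (now with $r_j/|x_j|=\tfrac14$ and $L_j=\log4$) are bounded directly because $\lambda_k\le\frac{2n-2}{n-2+k}$. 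Evaluating \eqref{eq2.17} with $|\beta|=i$ (when $i\ge1$) or \eqref{eq2.18} (when $i=0$) at $\xi=0$ and using \eqref{eq2.16}, $\vp_j\to0$, and the boundedness of the relevant prefactor, the assumed failure forces
\[
 \intl_{|\eta|<2}\frac{f_j(\eta)\,d\eta}{|\eta|^{i}}\longrightarrow\infty\quad(i\ge1),\qquad
 \frac1{L_j}\intl_{|\eta|<2}\Bigl(\log\tfrac5{|\eta|}\Bigr)f_j(\eta)\,d\eta\longrightarrow\infty\quad(i=0)
\]
in part~(i), and the same with $\liminf>0$ in place of the limits in part~(ii).

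On the other hand, substituting \eqref{eq3.21} into \eqref{eq2.15} and bounding $|D^ku(x_j+r_j\xi)|$, $1\le k\le n-1$, by \eqref{eq2.17} and $u(x_j+r_j\xi)$ by \eqref{eq2.18}, then invoking the two steps above and the decay hypotheses \eqref{eq3.23} (resp.\ \eqref{eq3.24}--\eqref{eq3.25}), one gets for $|\xi|<1$ the inequality
\[
 f_j(\xi)\le\Bigl(C+\frac{A}{L_j}\intl_{|\eta|<2}\bigl(\log\tfrac5{|\xi-\eta|}\bigr)f_j(\eta)\,d\eta\Bigr)^{\lambda_0}
 +\sum_{k=1}^{n-1}\Bigl(C+\intl_{|\eta|<2}\frac{A\,f_j(\eta)}{|\xi-\eta|^{k}}\,d\eta\Bigr)^{\lambda_k}
\]
in part~(i), and the same inequality with every $C$ replaced by one sequence $\vp_j\to0$ in part~(ii). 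Peeling off the part of each integral over $2R<|\eta|<2$ and using \eqref{eq2.16} localizes this to $|\xi|<R\le1$, exactly as for Theorem~\ref{thm3.1}. Then I would bootstrap: by Lemma~\ref{lem2.5} the map $h\mapsto\intl(\log\tfrac5{|\,\cdot-\eta|})h(\eta)\,d\eta$ carries $L^1(B_{2R})$ into every $L^p(B_R)$, $p<\infty$, and $L^p(B_{2R})$, $p>1$, into $L^\infty(B_R)$, so the $k=0$ term never obstructs the iteration; and by \cite[Lemma~7.12]{GT} the kernels $|\cdot|^{-k}$, $1\le k\le n-1$, improve $L^p$ to $L^q$ for any $\frac1p-\frac1q<\min_{1\le k\le n-1}\frac{n-k\lambda_k}{n}$, which is positive by \eqref{eq3.22}. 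Starting from the $L^1$ bound \eqref{eq2.16} and iterating finitely many times, $\{f_j\}$ is bounded (resp.\ tends to $0$) in $L^\infty(B_{R_0}(0))$ for some $R_0\in(0,1)$; the two integrals displayed above are then bounded (resp.\ null), which---after dividing the logarithmic one by $L_j\ge1$---contradicts the previous paragraph and proves the theorem.

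The step I expect to be the main obstacle is the exponent-and-logarithm accounting of the second paragraph: the minimum \eqref{eq3.26}, and in particular its $k=0$ entry $|x|^{b_0}(\log\tfrac5{|x|})^{-\lambda_0/n}$, is designed precisely so that with $r_j=|x_j|a(x_j)/4$ the $k=0$ prefactor $|x_j|^{(2n-2)-\lambda_0(n-2)}(r_j/|x_j|)^nL_j^{\lambda_0}$ is $O(1)$ (note the cancellation $(2n-2)-\lambda_0(n-2)+nb_0=0$), while $(r_j/|x_j|)^i$ and the rescaled $g_k$-terms simultaneously tend to $0$; carrying this out correctly, together with the comparabilities $a(x_j+r_j\xi)\asymp a(x_j)$ and $L_j\asymp\log\frac5{|x_j|}$, is the real content. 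Once it is in place, the bootstrap is routine and formally identical to that in the proof of Theorem~\ref{thm3.1}, with Lemma~\ref{lem2.5} playing the role of the Riesz estimate for the logarithmic kernel.
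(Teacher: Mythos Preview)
Your proposal is correct and follows essentially the same route as the paper's own proof: the paper also reduces to $\lambda_k\ge1$, sets $r_j=|x_j|\,a(x_j)$ (with $a\equiv\tfrac14$ in part~(i)), derives from \eqref{eq2.17}--\eqref{eq2.18} the integral inequalities you display, checks the same prefactor bounds $B_{0j},B_{kj}\le C$ via the exponent identities and the definition of $a(x)$, and then bootstraps exactly as in Theorem~\ref{thm3.1} using Riesz potential estimates together with Lemma~\ref{lem2.5} for the logarithmic kernel. Your extra remark that $a(x_j+r_j\xi)\asymp a(x_j)$ (needed to transfer the $g_k$ hypotheses from $x_j+r_j\xi$ to $x_j$) is a detail the paper uses implicitly; otherwise the two arguments coincide up to the harmless factor $1/4$ in your choice of $r_j$.
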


\begin{proof}
Since increasing to one those $\lambda_k$ which are less that one will change neither the value of $b$ nor, when $b>0$, the value of $a(x)$ for $x$ small, but will increase the right side of the second inequality in \eqref{eq3.21} we can, without loss of generality, assume, instead of \eqref{eq3.22}, the stronger condition that
\begin{equation}\label{eq3.27}
 \lambda_0\ge 1\quad \text{and}\quad 1\le \lambda_k < n/k\quad \text{for}\quad k=1,2,\ldots, n-1.
\end{equation}

Let $b$ and $g_k$ be as in part (i) (resp.\ part (ii)) of Theorem~\ref{thm3.2}. For $0 < |x|<1$ let $a(x)$ be defined by $a(x)\equiv \frac14$, (resp.\ by \eqref{eq3.26}). Then
\begin{equation}\label{eq3.28}
 \log \frac1{a(x)} \equiv \log 4,\quad \left(\text{resp. } \frac{\log \frac1{a(x)}}{\log \frac5{|x|}} \to b>0\quad \text{as}\quad x\to 0\right).
\end{equation}
Suppose for contradiction that part (i) (resp.\ part (ii)) is false. Then there exists $i\in \{0,1,2,\ldots, 2m-1\}$ and a sequence $\{x_j\}^\infty_{j=1} \subset {\bb R}^n$ satisfying
\[
 0 < 4|x_{j+1}| < |x_j| < 1/2\quad \text{and}\quad a(x_j)\le \frac14
\]
such that
\begin{align}\label{eq3.29}
 \liminf_{j\to\infty} \frac{|x_j|^{n-2}}{\log \frac1{a(x_j)}} u(x_j) &= \infty, \quad (\text{resp. } >0)\quad \text{if} \quad i=0\\
\label{eq3.30}
\liminf_{j\to\infty} |x_j|^{n-2+i} a(x_j)^i D^iu(x_j) &= \infty,\quad (\text{resp. } >0) \quad \text{if}\quad i\in \{1,2,\ldots, n-1\}.
\end{align}
Let $r_j = |x_j|a(x_j)$. Then $x_j$ and $r_j$ satisfy \eqref{eq2.14}. Let $f_j$ be as in Lemma~\ref{lem2.3}. Since
\[
 \frac{r_j}{|x_j|} = a(x_j) \equiv \frac14, \quad (\text{resp. } \to 0 \quad \text{as}\quad j\to \infty),
\]
and by \eqref{eq2.18} and \eqref{eq2.17} with $\xi=0$,
\[
 \frac{|x_j|^{n-2}}{\log \frac{|x_j|}{r_j}} u(x_j) \le \frac{C}{\log \frac{|x_j|}{r_j}} + \vp_j + \frac1{\log \frac{|x_j|}{r_j}} \intl_{|\eta|<2} A\left(\log \frac5{|\eta|}\right) f_j(\eta)\,d\eta
\]
and for $i\in \{1,2,\ldots, n-1\}$
\[
 \left(\frac{r_j}{|x_j|}\right)^i |x_j|^{n-2+i} |D^iu(x_j)| \le C\left(\frac{r_j}{|x_j|}\right)^i + \vp_j + \intl_{|\eta|<2} \frac{Af_j(\eta)}{|\eta|^i} d\eta,
\]
it follows from \eqref{eq3.29} and \eqref{eq3.30} that
\begin{align}\label{eq3.31}
 &\liminf_{j\to\infty} \intl_{|\eta|<2} \left(\log
   \frac5{|\eta|}\right) 
f_j(\eta)\,d\eta = \infty,\quad (\text{resp. } >0)\quad \text{if}\quad i=0\\
\intertext{and}
\label{eq3.32}
&\liminf_{j\to\infty} \intl_{|\eta|<2} \frac{f_j(\eta)}{|\eta|^i} d\eta = \infty, \quad (\text{resp. } >0)\quad \text{if} \quad i \in \{1,2,\ldots, n-1\}.
\end{align}

On the other hand, \eqref{eq2.15}, \eqref{eq3.21}, \eqref{eq2.18}, and \eqref{eq2.17} imply for $|\xi|<1$ that
\begin{align*}
 f_j(\xi) &\le |x_j|^{2n-2} \left(\frac{r_j}{|x_j|}\right)^n 
\sum^{n-1}_{k=0} (|D^ku(x_j+r_j\xi)| + g_k(x_j+r_j\xi))^{\lambda_k}\\
&\le B_{0j} \left(\frac{C}{\log \frac{|x_j|}{r_j}} + \vp_j + \frac1{\log \frac{|x_j|}{r_j}} \intl_{|\eta|<2} A \left(\log \frac5{|\xi-\eta|}\right) f_j(\eta)\,d\eta + G_{0j}(\xi)\right)^{\lambda_0}\\
&\quad + \sum^{n-1}_{k=1} B_{kj} \left(C\left(\frac{r_j}{|x_j|}\right)^k + \vp_j + \intl_{|\eta|<2} \frac{Af_j(\eta)}{|\xi-\eta|^k} d\eta + G_{kj}(\xi)\right)^{\lambda_k}
\end{align*}
where
\begin{align*}
 B_{0j} &:= \frac{|x_j|^{2n-2} \left(\frac{r_j}{|x_j|}\right)^n}{\left(\frac{|x_j|^{n-2}}{\log \frac{|x_j|}{r_j}}\right)^{\lambda_0}} = \left(\frac{a(x_j)\left(\log \frac1{a(x_j)}\right)^{\lambda_0/n}}{|x_j|^{b_0}}\right)^n\\
&\le \frac1{4^n} (\log 4)^{\lambda_0}, \quad \left(\text{resp. } \left(\frac{\log \frac1{a(x_j)}}{\log \frac5{|x_j|}}\right)^{\lambda_0}\right),\\
B_{kj} &:= \frac{|x_j|^{2n-2} \left(\frac{r_j}{|x_j|}\right)^n}{\left(\left(\frac{r_j}{|x_j|}\right)^k |x_j|^{n-2+k}\right)^{\lambda_k}} = \left(\frac{a(x_j)}{|x_j|^{b_k}}\right)^{n-k\lambda_k} \le 1,\\
G_{0j}(\xi) &:= \frac{|x_j|^{n-2}}{\log \frac{|x_j|}{r_j}} g_0(x_j+r_j\xi) =\frac{|x_j|^{n-2}}{\log \frac1{a(x_j)}} g_0(x_j+r_j\xi),\\
G_{kj}(\xi) &:= \left(\frac{r_j}{|x_j|}\right)^k |x_j|^{n-2+k} g_k(x_j+r_j\xi) = a(x_j)^k |x_j|^{n-2+k} g_k(x_j + r_j\xi).
\end{align*}
It follows therefore from \eqref{eq3.28}, \eqref{eq3.23}, \eqref{eq3.24}, and \eqref{eq3.25} that for $|\xi|<1$ we have
\begin{align}\label{eq3.33}
 f_j(\xi) &\le C\bigg[\bigg(1 + \intl_{|\eta|<2} \bigg(\log \frac5{|\xi-\eta|}\bigg) f_j(\eta)\,d\eta\bigg)^{\lambda_0} + \sum^{n-1}_{k=1} \bigg(1 + \intl_{|\eta|<2} \frac{f_j(\eta)}{|\xi-\eta|^k} d\eta\bigg)^{\lambda_k}\bigg],\\
\label{eq3.34}
\bigg(\text{resp. } f_j(\xi) &\le C\bigg[\bigg(\vp_j + \intl_{|\eta|<2} \bigg(\log \frac5{|\xi-\eta|}\bigg) f_j(\eta)\,d\eta \bigg)^{\lambda_0} + \sum^{n-1}_{k=1} \bigg(\vp_j + \intl_{|\eta|<2} \frac{f_j(\eta)}{|\xi-\eta|^k} d\eta\bigg)^{\lambda_k} \bigg]\bigg),
\end{align}
where $C$ is independent of $\xi$ and $j,\vp_j$ is independent of $\xi$, and $\vp_j\to 0$ as $j\to\infty$.

Using an argument very similar to the one used at the end of the proof
of Theorem~\ref{thm3.1} to show that \eqref{eq3.13},
(resp. \eqref{eq3.14}), leads to a contradiction of \eqref{eq3.10},
(resp. \eqref{eq3.11}), one can show that \eqref{eq3.33},
(resp. \eqref{eq3.34}), leads to a contradiction of \eqref{eq3.29},
(resp.\ \eqref{eq3.30})---the only significant difference being that
where we used Riesz potential estimates in the proof of
Theorem~\ref{thm3.1}, we must now use Riesz potential estimates
\emph{and} Lemma~\ref{lem2.5}.
\end{proof}

\begin{proof}[Proof of Theorem \ref{thm1.8}]
It follows from \eqref{eq1.15} that 
\begin{equation}\label{eq3.43}
 a := \frac{(n-2)(\lambda-1)-n}{\lambda} >0.
\end{equation}
Define $\psi\colon (0,1)\to (0,1)$ and $\rho\colon (0,1)\to (0,\infty)$ by 
\begin{equation}\label{eq3.44}
 \psi(r) = \max\{\sqrt{\varphi(r)}, r^{\frac{a\lambda}{2(\lambda-1)}}\}
\end{equation}
and
\begin{equation}\label{eq3.45}
 \rho(r) = \frac{n}{\lambda A} \frac{r^a}{\psi(r)^{\frac{\lambda-1}\lambda}}
\end{equation}
where $A=A(m,n)$ is as in Lemma~\ref{lem2.4}. By \eqref{eq3.44}
\begin{equation}\label{eq3.46}
 \rho(r) \le \frac{n}{\lambda A} r^{a/2}.
\end{equation}
Thus there exists a sequence $\{x_j\}^\infty_{j=1} \subset {\bb R}^n$ satisfying \eqref{eq2.25}, \eqref{eq2.26}, and
\begin{equation}\label{eq3.47}
 e^{-1} > \rho_j := \rho(|x_j|) \to 0\quad \text{as}\quad j\to\infty
\end{equation}
 such that if we define the sequence $\{r_j\}^\infty_{j=1}$ by
\begin{equation}\label{eq3.48}
 \left(\frac{|x_j|}{r_j}\right)^{n/\lambda} = \frac1{\rho_j} \log \frac1{\rho_j}
\end{equation}
then $r_j$ satisfies \eqref{eq2.27}. By \eqref{eq3.48}, \eqref{eq3.47}, and \eqref{eq3.45} we have
\begin{align}
 \log \frac{|x_j|}{r_j} &= \frac\lambda{n} \log\left(\frac1{\rho_j} \log \frac1{\rho_j}\right)\notag\\
\label{eq3.49}
&\ge \frac\lambda{n} \log \frac1{\rho_j}\\
&= \frac\lambda{n} \rho_j\left(\frac{|x_j|}{r_j}\right)^{n/\lambda}\notag\\
\label{eq3.50}
&= \frac1A \frac{|x_j|^a}{\psi(|x_j|)^{\frac{\lambda-1}\lambda}} \left(\frac{|x_j|}{r_j}\right)^{n/\lambda}.
\end{align}
Let $u$ be as in Lemma~\ref{lem2.4}. Then by \eqref{eq3.50} and Case~2 of Remark~\ref{rem2.1}, $u$ is a $C^\infty$ positive solution of \eqref{eq1.16} and by Lemma~\ref{lem2.4} we have
\[
 u(x_j) \ge \frac{A\psi(|x_j|)}{|x_j|^{n-2}} \log \frac{|x_j|}{r_j}
\]
and by \eqref{eq3.49} and \eqref{eq3.46},
\begin{align*}
 \log \frac{|x_j|}{r_j} &\ge \frac\lambda{n} \log \frac1{\rho_j} 
\ge \frac\lambda{n} \log \left(\frac{\lambda A}n |x_j|^{-a/2}\right)\\
&= \frac\lambda{n} \left(\frac{a}{2} \log \frac1{|x_j|} + \log \frac{\lambda A}n\right).
\end{align*}
Thus by \eqref{eq3.44} we have
\[
 \liminf_{j\to\infty} \frac{u(x_j)}{\sqrt{\varphi(|x_j|)}
   |x_j|^{-(n-2)} \log \frac1{|x_j|}} \ge A \frac{\lambda a}{2n} > 0
\]
from which we obtain \eqref{eq1.17}.
\end{proof}

By scaling $u$ in Theorem \ref{thm1.9}, the following theorem implies
Theorem~\ref{thm1.9}.

\begin{thm}\label{thm3.3}
Let $u(x)$ be a $C^{2m}$ nonnegative solution of
\begin{equation}\label{eq3.51}
 0 \le -\Delta^mu \le e^{u^\lambda+g^\lambda}\quad \text{in}\quad B_2(0)\backslash\{0\} \subset {\bb R}^n
\end{equation}
where $n\ge 2$ and $m\ge 2$ are integers, $m$ is odd, $2m=n$,
\begin{equation}\label{eq3.52}
 0 <\lambda<1,
\end{equation}
and $g\colon B_2(0)\backslash\{0\} \to [0,\infty)$ is a continuous function such that
\begin{equation}\label{eq3.53}
 g(x) = o\left(|x|^{\frac{-(n-2)}{1-\lambda}}\right)\quad \text{as}\quad x\to 0.
\end{equation}
Then
\begin{equation}\label{eq3.54}
 u(x) = o\left(|x|^{\frac{-(n-2)}{1-\lambda}}\right) \quad \text{as}\quad x\to 0.
\end{equation}
\end{thm}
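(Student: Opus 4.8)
The plan is to argue by contradiction, rescaling around a sequence of bad points exactly in the spirit of the proofs of Theorems~\ref{thm3.1} and \ref{thm3.2}, but with the rescaling radius $r_j$ chosen at the \emph{critical exponential scale} dictated by the conjectured bound, and with the bootstrap on the integrability of the rescaled right--hand side adapted to the exponential nonlinearity. Since \eqref{eq3.51} gives $-\Delta^mu\ge0$ in $B_2(0)\setminus\{0\}$, Lemmas~\ref{lem2.1}--\ref{lem2.3} apply with $2m=n$. Suppose \eqref{eq3.54} fails; then there are $c_0>0$ and a sequence $x_j\to0$ which, after passing to a subsequence, satisfies $0<4|x_{j+1}|\le|x_j|\le1/2$ and $u(x_j)\ge c_0|x_j|^{-(n-2)/(1-\lambda)}$. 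Put
\[
L_j:=|x_j|^{-\lambda(n-2)/(1-\lambda)}\to\infty,\qquad r_j:=|x_j|\,e^{-L_j},
\]
so $\log(|x_j|/r_j)=L_j$ and, after discarding finitely many $j$, $0<r_j\le|x_j|/4$; thus $x_j,r_j$ satisfy \eqref{eq2.14}. With $f=-\Delta^mu$, let $f_j$ and $\vp_j$ be as in Lemma~\ref{lem2.3}, and write $P_j(\xi):=\intl_{|\eta|<2}A(\log\tfrac5{|\xi-\eta|})f_j(\eta)\,d\eta\ (\ge0)$ and $\delta_j:=\intl_{|\eta|<2}f_j(\eta)\,d\eta$, so $\delta_j\to0$ by \eqref{eq2.16}. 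Since $|x_j|^{n-2}u(x_j)\ge c_0|x_j|^{\,n-2-\frac{n-2}{1-\lambda}}=c_0L_j$, evaluating \eqref{eq2.18} at $\xi=0$ gives $c_0L_j\le C+\vp_jL_j+P_j(0)$, hence $P_j(0)\ge(c_0-\vp_j)L_j-C\ge\tfrac{c_0}{2}L_j\to\infty$ for $j$ large.

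On the other hand, for $|\xi|<1$ one has $|x_j+r_j\xi|\ge\tfrac34|x_j|$, so \eqref{eq3.53} makes $g(x_j+r_j\xi)^\lambda=o(L_j)$ uniformly in $\xi$, while \eqref{eq2.18} gives $u(x_j+r_j\xi)\le(C+\vp_jL_j+P_j(\xi))|x_j|^{-(n-2)}$. Using subadditivity of $t\mapsto t^\lambda$, the identities $C^\lambda|x_j|^{-\lambda(n-2)}=o(L_j)$ and $(\vp_jL_j)^\lambda|x_j|^{-\lambda(n-2)}=\vp_j^\lambda L_j$, and $f_j(\xi)=|x_j|^{n-2}r_j^nf(x_j+r_j\xi)=|x_j|^{2n-2}e^{-nL_j}f(x_j+r_j\xi)\le|x_j|^{2n-2}e^{-nL_j}e^{u(x_j+r_j\xi)^\lambda+g(x_j+r_j\xi)^\lambda}$, we obtain for $j$ large and $|\xi|<1$
\[
f_j(\xi)\ \le\ |x_j|^{2n-2}\exp\!\Bigl(-\tfrac n2 L_j+\frac{P_j(\xi)^\lambda}{|x_j|^{\lambda(n-2)}}\Bigr).
\]
The choice $L_j=|x_j|^{-\lambda(n-2)/(1-\lambda)}$ is precisely what makes the error $(\vp_jL_j)^\lambda|x_j|^{-\lambda(n-2)}=\vp_j^\lambda L_j$ comparable to $L_j$ but with the vanishing coefficient $\vp_j^\lambda$, so the loss $-nL_j$ survives; a larger $L_j$ would wreck the lower bound on $P_j(0)$, a smaller one the upper bound on $f_j$.

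To conclude it suffices to show $f_j\to0$ in $L^\infty(B_{1/2})$, which contradicts $P_j(0)\to\infty$. Fix $q\ge1$. For $t\ge0$ and $\kappa>0$, $t^\lambda\le\kappa t+C(\lambda)\kappa^{-\lambda/(1-\lambda)}$; take $\kappa=\kappa_j:=|x_j|^{\lambda(n-2)}\delta_j^{-1/2}$. Using $\delta_j\to0$, $0<\lambda<1$ and $L_j|x_j|^{\lambda(n-2)}=|x_j|^{-\lambda^2(n-2)/(1-\lambda)}$ one checks that $q\,C(\lambda)\kappa_j^{-\lambda/(1-\lambda)}|x_j|^{-\lambda(n-2)}=o(L_j)$ while $\gamma_j:=q\,A\kappa_j\delta_j|x_j|^{-\lambda(n-2)}=qA\,\delta_j^{1/2}\to0$; hence for $|\xi|<1$
\[
\frac{q\,P_j(\xi)^\lambda}{|x_j|^{\lambda(n-2)}}\ \le\ o(L_j)+\gamma_j\intl_{|\eta|<2}\Bigl(\log\tfrac5{|\xi-\eta|}\Bigr)\frac{f_j(\eta)}{\delta_j}\,d\eta,
\]
so Jensen's inequality for the probability measure $\delta_j^{-1}f_j\,d\eta$ gives, since $\gamma_j<n/2$ eventually,
\[
\intl_{|\xi|<1}\exp\!\Bigl(\frac{q\,P_j(\xi)^\lambda}{|x_j|^{\lambda(n-2)}}\Bigr)d\xi\ \le\ e^{o(L_j)}\intl_{|\eta|<2}\Bigl(\,\intl_{|\xi|<1}\bigl(\tfrac5{|\xi-\eta|}\bigr)^{\gamma_j}d\xi\Bigr)\frac{f_j(\eta)}{\delta_j}\,d\eta\ \le\ C(n)\,e^{o(L_j)}.
\]
Combined with the boxed bound, $\int_{|\xi|<1}f_j^q\le C(n)|x_j|^{(2n-2)q}e^{(-nq/2+o(1))L_j}\to0$ for every $q\ge1$. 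Finally, for $|\xi|<1/2$ split $P_j(\xi)$ into the part over $|\eta|<1$ (bounded by $C(q)\|f_j\|_{L^q(B_1)}\to0$ by H\"older's inequality, since $\log\tfrac5{|\cdot|}\in L^{q'}$; cf.\ Lemma~\ref{lem2.5}) and the part over $1\le|\eta|<2$ (bounded by $(A\log10)\,\delta_j\to0$), so $\sup_{|\xi|<1/2}P_j\to0$; plugging this into the boxed bound yields $\sup_{|\xi|<1/2}f_j\le|x_j|^{2n-2}e^{(-n/2+o(1))L_j}\to0$. Then $P_j(0)\le A\|f_j\|_{L^\infty(B_{1/2})}\int_{|\eta|<1/2}\log\tfrac5{|\eta|}\,d\eta+(A\log10)\delta_j\to0$, contradicting $P_j(0)\ge\tfrac{c_0}{2}L_j\to\infty$, and proving \eqref{eq3.54}.

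The main obstacle---and the only point where this differs substantively from the proofs of Theorems~\ref{thm3.1}--\ref{thm3.2}---is the exponential right--hand side, which after rescaling produces the factor $\exp\!\bigl(P_j(\xi)^\lambda|x_j|^{-\lambda(n-2)}\bigr)$ whose blow--up coefficient $|x_j|^{-\lambda(n-2)}$ would ruin any naive $L^1\to L^\infty$ bootstrap. What rescues the argument is the interplay of $0<\lambda<1$ (allowing $t^\lambda$ to be linearized with an arbitrarily small slope at the cost of a controllable additive constant) with $\|f_j\|_{L^1}=\delta_j\to0$ coming from \eqref{eq2.16}: choosing the linearization parameter $\kappa_j$ to sit between $|x_j|^{\lambda(n-2)}$ and $|x_j|^{\lambda(n-2)}/\delta_j$ forces \emph{both} resulting error terms to be $o(L_j)$, so that the gain $-\tfrac n2 L_j$ dominates and drives $\|f_j\|_{L^q}$ to zero for every $q$ in a single step. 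Everything else is routine bookkeeping of exponents, of exactly the type appearing in the proofs of Theorems~\ref{thm3.1} and \ref{thm3.2}.
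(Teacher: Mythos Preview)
Your argument is correct, and the overall architecture (contradiction, the critical choice $\log(|x_j|/r_j)=|x_j|^{-\lambda(n-2)/(1-\lambda)}$, the use of Lemma~\ref{lem2.3}, and Jensen's inequality) coincides with the paper's. The handling of the exponential nonlinearity, however, is genuinely different. The paper splits $B_1$ into the level set $\Omega_j=\{u_j>M_j\}$ and its complement: on $\Omega_j$ it applies Jensen to the eventually convex function $t\mapsto e^{t^\lambda}$ and then computes the resulting integral $\int e^{b_j^\lambda(\log(5/|\xi|))^\lambda}d\xi$ explicitly to show $\int_{\Omega_j}(f_j/|x_j|^{n-2})^2\to0$; on the complement it has the crude bound $g_j\le e^{2M_j^\lambda}$ and closes with a mass–rearrangement argument showing that no density satisfying \eqref{eq3.65}--\eqref{eq3.67} can exist. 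You bypass both the level-set splitting and the rearrangement step by linearizing $t^\lambda\le\kappa_j t+C(\lambda)\kappa_j^{-\lambda/(1-\lambda)}$ with $\kappa_j=|x_j|^{\lambda(n-2)}\delta_j^{-1/2}$; this choice makes \emph{both} resulting error terms $o(L_j)$ simultaneously, after which plain Jensen for $e^t$ and the integrability of $|\xi-\eta|^{-\gamma_j}$ for small $\gamma_j$ give $\|f_j\|_{L^q(B_1)}\to0$ for every $q$ in a single stroke. Your route is shorter and more transparent; what the paper's approach makes more visible is the role of the convexity threshold of $e^{t^\lambda}$, which is precisely where $\lambda=1$ becomes critical. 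A minor remark: once you have $\|f_j\|_{L^q(B_1)}\to0$, your step~2 already yields $P_j(0)\to0$ and the contradiction, so the subsequent passage through $\sup_{B_{1/2}}f_j$ is redundant (though harmless).
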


\begin{proof}
Suppose for contradiction that \eqref{eq3.54} does not hold. Then
there exists a sequence $\{x_j\}_{j=1}^\infty \subset {\bb R}^n$ such that
\[
 0 < 4|x_{j+1}| < |x_j| < 1/2
\]
and
\begin{equation}\label{eq3.55}
 \liminf_{j\to\infty} |x_j|^{\frac{n-2}{1-\lambda}} u(x_j)>0.
\end{equation}
Define $r_j>0$ by
\begin{equation}\label{eq3.56}
 \log \frac1{r_j} = |x_j|^{\frac{-(n-2)\lambda}{1-\lambda}}.
\end{equation}
Then
\begin{align}
 \log \frac{|x_j|}{r_j} &= \log \frac1{r_j} -  \log \frac1{|x_j|} = |x_j|^{\frac{-(n-2)\lambda}{1-\lambda}} \left[1 - |x_j|^{\frac{(n-2)\lambda}{1-\lambda}} \log \frac1{|x_j|}\right]\notag\\
\label{eq3.57}
&= |x_j|^{\frac{-(n-2)\lambda}{1-\lambda}} (1+o(1)) \quad \text{as}\quad j\to\infty.
\end{align}
So, by taking a subsequence of $j$ if necessary, we can assume $r_j<|x_j|/4$.

Let $f_j$ be as in Lemma~\ref{lem2.3}. Multiplying \eqref{eq2.18} by $|x_j|^{\frac{(n-2)\lambda}{1-\lambda}} \log \frac{|x_j|}{r_j}$ and using \eqref{eq3.57} we get for $|\xi|<1$ that
\begin{equation}\label{eq3.58}
 |x_j|^{\frac{n-2}{1-\lambda}} u(x_j+r_j\xi) \le \vp_j + |x_j|^{\frac{(n-2)}{1-\lambda}} \intl_{|\eta|<2} A\left(\log \frac5{|\xi-\eta|}\right) \frac{f_j(\eta)}{|x_j|^{n-2}}\,d\eta
\end{equation}
where the constant $A$ depends only on $m$ and $n$,
the constants $\vp_j$ are independent of $\xi$, and $\vp_j\to 0$ as $j\to\infty$. Substituting $\xi=0$ in \eqref{eq3.58} and using \eqref{eq3.55} and \eqref{eq2.16} we get
\begin{equation}\label{eq3.59}
 \liminf_{j\to\infty} |x_j|^{\frac{n-2}{1-\lambda}} \intl_{|\eta|<1} \left(\log \frac5{|\eta|}\right) \frac{f_j(\eta)}{|x_j|^{n-2}}\,d\eta > 0.
\end{equation}
By \eqref{eq2.15}, \eqref{eq3.51}, \eqref{eq3.58}, and \eqref{eq3.53} we have  
\begin{equation}\label{eq3.60}
 \frac{f_j(\xi)}{|x_j|^{n-2}r^n_j} \le
 e^{u_j(\xi)^\lambda+M^\lambda_j} \quad\text{for} \quad |\xi|<1
\end{equation}
where
\[
 u_j(\xi) = \intl_{|\eta|<2} A\left(\log \frac5{|\xi-\eta|}\right) \frac{f_j(\eta)}{|x_j|^{n-2}}\,d\eta
\]
and the positive constants $M_j$ satisfy
\begin{equation}\label{eq3.61}
 M_j|x_j|^{\frac{n-2}{1-\lambda}} \to 0 \quad \text{and}\quad
 M_j\to\infty \quad \text{as} \quad j\to\infty.
\end{equation}
Let $\Omega_j =\{\xi\in B_1(0)\colon u_j(\xi) > M_j\}$. Then for $\xi\in \Omega_j$ it follows from \eqref{eq3.60} that
\begin{align}
 \frac{f_j(\xi)^2}{(|x_j|^{n-2}r^n_j)^2} &\le e^{4u_j(\xi)^\lambda}\notag\\
\label{eq3.62}
&\le \exp\left[\left(\, \intl_{|\eta|<2} b_j\left(\log \frac5{|\xi-\eta|}\right) \frac{f_j(\eta)}{\int_{B_2}f_j} d\eta\right)^\lambda\right]
\end{align}
where
\[
 b_j = 4^{1/\lambda} A|x_j|^{-(n-2)} \max\left\{\intl_{B_2} f_j, |x_j|^{\frac{n-2}2}\right\}.
\]
By \eqref{eq2.16},
\begin{equation}\label{eq3.63}
 b_j|x_j|^{n-2}\to 0\quad \text{and}\quad
 b_j\to\infty\quad\text{as}\quad j\to\infty.
\end{equation}
Hence by \eqref{eq3.62}, Jensen's inequality, and the fact that $\exp(t^\lambda)$ is concave up for $t$ large we have for $\xi\in\Omega_j$ that
\[
 \frac{f_j(\xi)^2}{(|x_j|^{n-2}r^n_j)^2} \le \intl_{|\eta|<2} \exp\left(b^\lambda_j \left(\log \frac5{|\xi-\eta|}\right)^\lambda\right) \frac{f_j(\eta)}{\int_{B_2}f_j}\, d\eta
\]
and consequently
\begin{align}
\intl_{\Omega_j} \frac{f_j(\xi)^2}{(|x_j|^{n-2}r^n_j)^2}\,d\xi &\le
\intl_{|\eta|<2}\left(\, \intl_{|\xi|<1} \exp\left(b^\lambda_j\left(\log \frac5{|\xi-\eta|}\right)^\lambda\right) d\xi\right) \frac{f_j(\eta)}{\int_{B_2} f_j}\,d\eta\notag\\
&\le \max_{|\eta|\le 2} \intl_{|\xi|<1} \exp\left(b^\lambda_j\left(\log \frac5{|\xi-\eta|}\right)^\lambda\right)\, d\xi\notag\\
&= \intl_{|\xi|<1} \exp\left(b^\lambda_j\left(\log \frac5{|\xi|}\right)^\lambda\right) \, d\xi\notag\\ 
\label{eq3.64}
&= I_1+I_2
\end{align}
where
\[
 I_1 = \intl_{\underset{\sst |\xi|<1}{\log \frac5{|\xi|}<(b^\lambda_j\lambda)^{\frac1{1-\lambda}}}} \exp\left(b^\lambda_j(\log \frac5{|\xi|})^\lambda\right)\,d\xi\quad \text{and}\quad I_2 = \intl_{\log \frac5{|\xi|}>(b^\lambda_j\lambda)^{\frac1{1-\lambda}}} \exp\left(b^\lambda_j(\log \frac5{|\xi|})^\lambda\right) \,d\xi.
\]
Clearly
\[
 \frac{I_1}{|B_1(0)|} \le \exp\left((b_j(b^\lambda_j\lambda)^{\frac1{1-\lambda}})^\lambda\right) = \exp\left((b_j\lambda)^{\frac\lambda{1-\lambda}}\right) \le \exp\left(b^{\frac\lambda{1-\lambda}}_j\right)
\]
and using Jensen's inequality and the fact that $e^{b^\lambda_j(\log t)^\lambda}$ is concave down as a function of $t$ for $\log t > (b^\lambda_j\lambda)^{\frac1{1-\lambda}}$ one can show that
\[
 I_2 \le \exp\left(C b^{\frac\lambda{1-\lambda}}_j\right)
\]
where $C$ depends only on $n$. Therefore by \eqref{eq3.64} and \eqref{eq3.56},
\begin{align*}
 \intl_{\Omega_j} \left(\frac{f_j(\xi)}{|x_j|^{n-2}}\right)^2 d\xi &\le  \exp\left(-2n |x_j|^{\frac{-(n-2)\lambda}{1-\lambda}}\right) \exp\left(Cb^{\frac\lambda{1-\lambda}}_j\right)\\
&= \exp \left(C b^{\frac\lambda{1-\lambda}}_j - 2n|x_j|^{\frac{-(n-2)\lambda}{1-\lambda}}\right) \to 0\quad \text{as}\quad j\to\infty
\end{align*}
by \eqref{eq3.63}. Thus by H\"older's inequality
\[
 \lim_{j\to \infty} \intl_{\Omega_j} \left(\log \frac5{|\eta|}\right) \frac{f_j(\eta)}{|x_j|^{n-2}}\,d\eta = 0.
\]
Hence defining $g_j\colon  B_1(0)\to [0,\infty)$ by
\[
 g_j(\xi) := \begin{cases}
              f_j(\xi)&\text{for $\xi\in B_1\backslash\Omega_j$,}\\
0&\text{for $\xi\in \Omega_j$,}
             \end{cases}
\]
it follows from \eqref{eq3.59} and \eqref{eq3.61} that
\begin{equation}\label{eq3.65}
 \frac1{M^\lambda_j} \intl_{|\eta|<1} \left(\log \frac5{|\eta|}\right) g_j(\eta)\,d\eta \to \infty \quad \text{as}\quad j\to \infty.
\end{equation}
By \eqref{eq2.16}, we have
\begin{equation}\label{eq3.66}
 \intl_{|\eta|<1} g_j(\eta)\,d\eta\to 0\quad \text{as}\quad j\to\infty
\end{equation}
and by \eqref{eq3.60} we have
\begin{equation}\label{eq3.67}
 g_j(\xi) \le e^{2M^\lambda_j}\quad \text{in}\quad B_1(0).
\end{equation}

For fixed $j$, think of $g_j(\eta)$ as the density of a distribution of mass in $B_1$ satisfying \eqref{eq3.65}, \eqref{eq3.66}, and \eqref{eq3.67}. By moving small pieces of this mass nearer to the origin in such a way that the new density (which we again denote by $g_j(\eta))$ does not violate \eqref{eq3.67}, we will not change the total mass $\int_{B_1} g_j(\eta)\,d\eta$ but $\int_{B_1}(\log 5/|\eta|) g_j(\eta)\,d\eta$ will increase. Thus for some $\rho_j\in (0,1)$ the functions
\[
 g_j(\eta) = \begin{cases}
              e^{2M^\lambda_j}&\text{for $|\eta|<\rho_j$,}\\
0&\text{for $\rho_j<|\eta|<1$}
             \end{cases}
\]
satisfy \eqref{eq3.65}, \eqref{eq3.66}, and \eqref{eq3.67}, which, as elementary and explicit calculations show, is impossible because $M_j\to\infty$ as $j\to \infty$. This contradiction proves Theorem~\ref{thm3.3}.
\end{proof}

\begin{proof}[Proof of Theorem \ref{thm1.10}]
Define $\psi\colon (0,1)\to (0,1)$ by 
\[
 \psi(r) = \max\left\{\varphi(r)^{\frac{1-\lambda}2}, r^{\frac{n-2}2}\right\}.
\]
Since $\psi(r)\ge r^{\frac{n-2}2}$ there exists a sequence
$\{x_j\}^\infty_{j=1} \subset {\bb R}^n$ satisfying \eqref{eq2.25} and
\eqref{eq2.26} such that if we define the sequence
$\{r_j\}^\infty_{j=1}\subset (0,\infty)$ by 
\begin{equation}\label{eq3.68}
 \log \frac{|x_j|}{r_j} = \left[\frac1{2n} \left(\frac{A\psi(|x_j|)}{|x_j|^{n-2}}\right)^\lambda \right]^{\frac1{1-\lambda}},
\end{equation}
where $A=A(m,n)$ is as in Lemma \ref{lem2.4}, then $r_j$ will satisfy \eqref{eq2.27} and
\[
 \log \frac1{|x_j|^{2n-2}} < \log \frac{|x_j|}{r_j}.
\]
Thus
\begin{align}
 \frac{\log \frac{\psi(|x_j|)}{|x_j|^{2n-2}} + n \log \frac{|x_j|}{r_j}}{\left(\frac{A \psi(|x_j|)}{|x_j|^{n-2}} \log \frac{|x_j|}{r_j}\right)^\lambda} &\le \frac{2n \log \frac{|x_j|}{r_j}}{\left(\frac{A\psi(|x_j|)}{|x_j|^{n-2}} \log \frac{|x_j|}{r_j}\right)^\lambda}\notag\\
\label{eq3.69}
&= \frac{\left(\log \frac{|x_j|}{r_j}\right)^{1-\lambda}}{\frac1{2n} \left(\frac{A \psi(|x_j|)}{|x_j|^{n-2}}\right)^\lambda} = 1.
\end{align}
Let $u$ be as in Lemma~\ref{lem2.4}. Then by \eqref{eq3.69} and Case~2 of Remark~\ref{rem2.1}, $u$ is a $C^\infty$ positive solution of \eqref{eq1.20} and by Lemma~\ref{lem2.4} and \eqref{eq3.68} we have
\begin{align*}
 u(x_j) &\ge \frac{A\psi(|x_j|)}{|x_j|^{n-2}} \left[\frac1{2n} \left(\frac{A\psi(|x_j|)}{|x_j|^{n-2}}\right)^\lambda\right]^{\frac1{1-\lambda}}\\
&= \left(\frac{A\psi(|x_j|)}{2n}\right)^{\frac1{1-\lambda}} \frac1{|x_j|^{\frac{n-2}{1-\lambda}}}\\
&\ge \left(\frac{A}{2n}\right)^{\frac1{1-\lambda}} \sqrt{\varphi(|x_j|)}\ |x_j|^{-\frac{n-2}{1-\lambda}}
\end{align*}
which implies \eqref{eq1.21}.
\end{proof}

\begin{proof}[Proof of Theorem \ref{thm1.11}]
Define $\psi\colon (0,1)\to (0,1)$ by $\psi(r) = r^{\frac{n-2}2}$. Choose a sequence $\{x_j\}^\infty_{j=1} \subset {\bb R}^n$ satisfying \eqref{eq2.25}, \eqref{eq2.26}, and
\begin{equation}\label{eq3.70}
 \frac{A\psi(|x_j|)}{|x_j|^{n-2}} > n+1
\end{equation}
where $A=A(m,n)$ is as in Lemma~\ref{lem2.4}. Choose a sequence $\{r_j\}^\infty_{j=1} \subset {\bb R}$ satisfying \eqref{eq2.27},
\begin{equation}\label{eq3.71}
 \log \frac1{|x_j|^{2n-2}} < \log \frac{|x_j|}{r_j}
\end{equation}
and
\begin{equation}\label{eq3.72}
 (n+1) \log \frac{|x_j|}{r_j} > \varphi(|x_j|)^2.
\end{equation}
Then by \eqref{eq3.71} and \eqref{eq3.70} we have
\begin{equation}\label{eq3.73}
 \log \frac{\psi(|x_j|)}{|x_j|^{2n-2}} + n \log \frac{|x_j|}{r_j} \le (n+1) \log \frac{|x_j|}{r_j} \le \left((n+1) \log \frac{|x_j|}{r_j}\right)^\lambda \le \left(\frac{A\psi(|x_j|)}{|x_j|^{n-2}} \log \frac{|x_j|}{r_j}\right)^\lambda.
\end{equation}
Let $u$ be as in Lemma~\ref{lem2.4}. Then by \eqref{eq3.73} and Case~2 of Remark~\ref{rem2.1}, $u$ is a $C^\infty$ positive solution of \eqref{eq1.22} and by Lemma~\ref{lem2.4}, \eqref{eq3.70} and \eqref{eq3.72} we have
\[
 u(x_j) \ge \varphi(x_j)^2
\]
which implies \eqref{eq1.23}.
\end{proof}

\section{Proofs when the singularity is at infinity}\label{sec4}

\indent 

In this section we prove Theorems~\ref{thm1.14}--\ref{thm1.17} which
deal with the case that the singularity is at infinity.

By scaling and translating $v$ in Theorems \ref{thm1.14}, \ref{thm1.15}, and
\ref{thm1.16}, we see that Theorems~\ref{thm4.1}, \ref{thm4.2}, and
\ref{thm4.3} below imply Theorems~\ref{thm1.14}, \ref{thm1.15}, and
\ref{thm1.16} respectively.

\begin{thm}\label{thm4.1}
Let $v(y)$ be a $C^{2m}$ nonnegative solution of 
\begin{equation}\label{eq4.1}
0 \le -\Delta^mv \le (v+g)^\sigma\quad \text{in}\quad {\bb R}^n\backslash B_{1/2}(0)
\end{equation}
where $m\ge 2$ and $n\ge 2$ are integers, $m$ is odd, $2m<n$, 
\[
0 < \sigma < \frac{n}{n-2m},
\]
and $g\colon {\bb R}^n\backslash B_{1/2}(0)\to [1,\infty)$ is a continuous function satisfying
\begin{equation}\label{eq4.2}
g(y) = O(|y|^b)\quad \text{as}\quad |y|\to \infty
\end{equation}
where $b$ is given by \eqref{eq1.29-2}.
Then
\[
v(y) = O(|y|^b)\quad \text{as}\quad |y|\to\infty.
\]
\end{thm}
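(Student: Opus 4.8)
The plan is to reduce Theorem~\ref{thm4.1} to Theorem~\ref{thm3.1} by means of the $m$-Kelvin transform, as the introduction announces. First I would set $u(x)=|y|^{n-2m}v(y)$ where $y=x/|x|^2$; since $|y|\ge1/2$ corresponds to $0<|x|\le2$ and the Kelvin transform of a $C^{2m}$ function on a punctured neighbourhood of $0$ is again $C^{2m}$, $u$ is a nonnegative $C^{2m}$ function on $B_2(0)\setminus\{0\}$. Applying \eqref{eq1.27} with the roles of $u$ and $v$ interchanged gives $\Delta^m u(x)=|y|^{n+2m}\Delta^m v(y)$, and substituting $v(y)=|x|^{n-2m}u(x)$ and $|y|=1/|x|$ turns \eqref{eq4.1} into
\[
0\le-\Delta^m u(x)\le|x|^{-(n+2m)}\bigl(|x|^{n-2m}u(x)+g(x/|x|^2)\bigr)^{\sigma}\qquad\text{in }B_2(0)\setminus\{0\}.
\]

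Next I would bring this into the form required by Theorem~\ref{thm3.1}. Since $b>0$ (by \eqref{eq1.29-2} and $0<\sigma<\frac{n}{n-2m}$), \eqref{eq4.2} and continuity of $g$ give $g(y)\le C|y|^{b}$ on all of ${\bb R}^n\setminus B_{1/2}(0)$, so $g(x/|x|^2)\le C|x|^{-b}$ on $B_2(0)\setminus\{0\}$. Factoring $|x|^{n-2m}$ out of the parenthesis and setting $c:=(n+2m)-\sigma(n-2m)$, which is positive because $\sigma<\frac{n}{n-2m}<\frac{n+2m}{n-2m}$, the right side is at most $|x|^{-c}(u+\bar g)^{\sigma}$ with $\bar g(x)=C|x|^{-(b+n-2m)}+1$. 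For any $\lambda_0>\sigma$ and all $t\ge0$, $x\ne0$ one has the \emph{elementary} inequality $|x|^{-c}t^{\sigma}\le\bigl(t+|x|^{-c/(\lambda_0-\sigma)}\bigr)^{\lambda_0}$ (split into the cases $t\ge|x|^{-c/(\lambda_0-\sigma)}$ and $t<|x|^{-c/(\lambda_0-\sigma)}$), so $u$ satisfies $0\le-\Delta^m u\le(u+g_0)^{\lambda_0}$ in $B_2(0)\setminus\{0\}$, where $g_0\colon B_2(0)\setminus\{0\}\to[1,\infty)$ is continuous (after adding harmless constants) with $g_0(x)=O\bigl(|x|^{-\gamma}\bigr)$, $\gamma=\max\{b+n-2m,\ c/(\lambda_0-\sigma)\}$. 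Padding with the trivial terms $\lambda_k=1$, $g_k\equiv1$ for $1\le k\le2m-1$ (which only enlarges the right side, and does not violate \eqref{eq3.2}) places us under the hypotheses of Theorem~\ref{thm3.1}, provided $\lambda_0<\frac{n}{n-2m}$.

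It remains to choose $\lambda_0$, and this bookkeeping is where the real work lies. In the notation of Theorem~\ref{thm3.1} the trivial terms do not affect the quantity \eqref{eq3.3}, so $\tilde b=\max\{0,\frac{\lambda_0(n-2)-(2m+n-2)}{n-\lambda_0(n-2m)}\}$ and $a(0)=(n-2m)\tilde b+(n-2)$. Undoing the Kelvin transform, $v(y)=|x|^{n-2m}u(x)$ with $|x|=1/|y|$, so a bound $u(x)=O(|x|^{-a(0)})$ gives $v(y)=O(|y|^{a(0)-(n-2m)})$; hence I would fix $\lambda_0$ by the matching condition $a(0)=b+(n-2m)$, i.e.\ $\tilde b=\frac{b-(2m-2)}{n-2m}$. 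The main obstacle is then to verify, by elementary algebra using only $m\ge2$ and $0<\sigma<\frac{n}{n-2m}$, that this $\lambda_0$ satisfies $\frac{2m+n-2}{n-2}<\lambda_0<\frac{n}{n-2m}$ and $\lambda_0>\sigma$, and moreover that $c/(\lambda_0-\sigma)=b+(n-2m)$. The first two facts guarantee $\tilde b>0$ (so we are in part~(ii) of Theorem~\ref{thm3.1}) and legitimacy of the elementary inequality above; the third forces $\gamma=a(0)$, so $g_0$ meets the decay hypothesis \eqref{eq3.5} in the ``big oh'' form allowed by Remark~\ref{rem3.1}. Theorem~\ref{thm3.1}(ii) then yields $u(x)=O(|x|^{-a(0)})$ as $x\to0$, whence $v(y)=|x|^{n-2m}u(x)=O(|x|^{n-2m-a(0)})=O(|x|^{-b})=O(|y|^{b})$ as $|y|\to\infty$, which is the assertion of Theorem~\ref{thm4.1}.
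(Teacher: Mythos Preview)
Your proposal is correct and follows essentially the same route as the paper: apply the $m$-Kelvin transform, absorb the weight $|x|^{-c}$ into a larger exponent $\lambda_0>\sigma$, invoke Theorem~\ref{thm3.1}(ii) in the ``big oh'' form of Remark~\ref{rem3.1}, and transform back. Your $\lambda_0$, fixed by the matching condition $a(0)=b+(n-2m)$, is exactly the paper's $\lambda$, defined there by $E=a$; your claimed identity $c/(\lambda_0-\sigma)=b+(n-2m)$ is the content of \eqref{eq4.5}--\eqref{eq4.6}, and the consistency of the two conditions is precisely the definition \eqref{eq1.29-2} of $b$. The only cosmetic difference is that the paper obtains the inequality $|x|^{-c}t^\sigma\le(t+|x|^{-c/(\lambda-\sigma)})^{\lambda}$ via Young's inequality $ab\le a^p+b^q$ followed by $(A^p+B^p+C^p)^\sigma\le(A+B+C)^{p\sigma}$, whereas your two-case argument gives it directly.
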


\begin{proof}
Let $\lambda$ be the unique solution of $E=a$ where
\[
E = \frac{n+2m-\sigma(n-2m)}{\lambda-\sigma}
\]
and
\begin{equation}\label{eq4.3}
a := n-2+(n-2m) \frac{\lambda(n-2)-(2m+n-2)}{n-\lambda(n-2m)} = \frac{4m(m-1)}{n-\lambda(n-2m)}.
\end{equation}
Then $\sigma < \lambda < \frac{n}{n-2m}$ and thus
\begin{equation}\label{eq4.4}
p := \frac\lambda\sigma >1.
\end{equation}
Also 
\begin{align}
\lambda &= \frac{2m+n-2}{n-2} + \frac{8m(m-1)(1+\sigma(m-1))}{(n-2)(4m(m-1)+(n-2m)(n+2m-\sigma(n-2m)))} > \frac{2m+n-2}{n-2},\notag\\
\label{eq4.5}
a &= E = \frac{n+2m+b\sigma}\lambda\\
\intertext{and}
\label{eq4.6}
b &= a-(n-2m).
\end{align}

Let $u(x)$ be defined by \eqref{eq1.26}. Then by \eqref{eq1.27} and \eqref{eq4.1} we have
\[
0 \le -|x|^{n+2m} \Delta^mu(x) \le\left(|x|^{n-2m} u(x) + g\left(\frac{x}{|x|^2}\right)\right)^\sigma
\]
and thus letting $q$ be conjugate H\"older exponent of $p$ and using \eqref{eq4.2}, \eqref{eq4.4}, and \eqref{eq4.5} we obtain
\begin{align*}
0 &\le -\Delta^m u(x) \le\left(\left(\frac1{|x|}\right)^{\frac{n+2m-\sigma(n-2m)}\sigma} u(x) + \left(\frac1{|x|}\right)^{\frac{n+2m}\sigma} g\left(\frac{x}{|x|^2}\right)\right)^\sigma\\
&\le \left(u(x)^p + \left(\frac1{|x|}\right)^{\frac{q(n+2m-\sigma(n-2m))}\sigma} + O\left(\left( \frac1{|x|}\right)^{\frac{n+2m+b\sigma}\sigma}\right)\right)^\sigma\\
&\le \left[u(x) + \left(\frac1{|x|}\right)^{\frac{n+2m-\sigma(n-2m)}{\sigma(p-1)}} +O\left( \left(\frac1{|x|}\right)^{\frac{n+2m+b\sigma}\lambda}\right)\right]^\lambda\\
&= [u(x) + O(|x|^{-a})]^\lambda\quad \text{in}\quad B_2(0)\backslash\{0\}.
\end{align*}
Thus by  \eqref{eq4.3} and Remark \ref{rem3.1} after Theorem~\ref{thm3.1} we have 
\[
u(x) = O(|x|^{-a})\quad \text{as}\quad x\to 0.
\]
Hence
\begin{align*}
v(y) &= |x|^{n-2m} u(x) = O(|x|^{n-2m-a})  = O(|y|^{a-(n-2m)})\\
&= O(|y|^b)\quad \text{as}\quad |y|\to\infty
\end{align*}
by \eqref{eq4.6}.
\end{proof}

\begin{thm}\label{thm4.2}
Let $v(y)$ be a $C^{2m}$ nonnegative solution of
\begin{equation}\label{eq4.7}
0 \le -\Delta^m v \le (v+g(y))^\sigma \quad \text{in}\quad {\bb R}^n\backslash B_{1/2}(0)
\end{equation}
where $m\ge 2$ and $n\ge 2$ are integers, $m$ is odd, $2m=n$, $\sigma>0$, and $g\colon {\bb R}^n\backslash B_{1/2}(0)\to [1,\infty)$ is a continuous function satisfying
\begin{align}\label{eq4.8}
g(y) &= o(|y|^{n-2} (\log 5|y|)^{1+\frac{2n}{\sigma(n-2)}}) \quad \text{as}\quad |y|\to\infty.\\
\intertext{Then}
\label{eq4.9}
v(y) &= o(|y|^{n-2} \log 5|y|)\quad \text{as}\quad |y|\to\infty.
\end{align}
\end{thm}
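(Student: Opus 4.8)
The plan is to reduce to the interior singularity via the $m$-Kelvin transform and then invoke Theorem~\ref{thm3.2}(ii). Since $2m=n$, the transform in \eqref{eq1.26} is simply composition with the inversion $x\mapsto x/|x|^2$, so I would set $u(x):=v(x/|x|^2)$ for $0<|x|\le 2$; this $u$ is $C^{2m}$ and nonnegative there, and by \eqref{eq1.27}, with $y=x/|x|^2$ and $|x|=1/|y|$, it satisfies $\Delta^m u(x)=|x|^{-2n}\Delta^m v(y)$. Combining this with \eqref{eq4.7} gives
\[
0\le -\Delta^m u(x)\le |x|^{-2n}\bigl(u(x)+\tilde g(x)\bigr)^\sigma\qquad\text{in }B_2(0)\setminus\{0\},
\]
where $\tilde g(x):=g(x/|x|^2)$, and under the inversion the hypothesis \eqref{eq4.8} becomes $\tilde g(x)=o\bigl(|x|^{-(n-2)}(\log\tfrac5{|x|})^{1+\frac{2n}{\sigma(n-2)}}\bigr)$ as $x\to0$.

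Next I would fix any $\lambda_0>\sigma+\frac{2n}{n-2}$ and set $p:=\lambda_0/\sigma>1$. Young's inequality gives $u\,|x|^{-2n/\sigma}\le u^p+|x|^{-\frac{2n\lambda_0}{\sigma(\lambda_0-\sigma)}}$, and, together with the superadditivity $a^p+b^p\le(a+b)^p$ for $p\ge1$, this yields $|x|^{-2n}(u+\tilde g)^\sigma\le (u+g_0)^{\lambda_0}$ with
\[
g_0(x):=\Bigl(|x|^{-\frac{2n\lambda_0}{\sigma(\lambda_0-\sigma)}}+|x|^{-2n/\sigma}\tilde g(x)\Bigr)^{1/p}+1,
\]
a continuous function with $g_0\ge1$. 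Since adding the nonnegative terms $(|D^ku(x)|+1)^1$, $k=1,\dots,n-1$, only enlarges the right-hand side, $u$ is a solution of \eqref{eq3.21} with this $g_0$, with $g_k\equiv1$ and $\lambda_k=1$ for $k\ge1$; the latter are admissible since $1<n/k$. For these data $b_k=\frac{(n-2+k)-(2n-2)}{n-k}=-1<0$ for $k\ge1$, while $b_0=\frac{\lambda_0(n-2)-(2n-2)}{n}>0$ because $\lambda_0>\frac{2n}{n-2}>\frac{2n-2}{n-2}$, so $b=b_0>0$ and we are in case (ii) of Theorem~\ref{thm3.2}.

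It remains to check the growth hypotheses of Theorem~\ref{thm3.2}(ii). With $b_0>0$ one has $a(x)=|x|^{b_0}(\log\tfrac5{|x|})^{-\lambda_0/n}$ for $x$ small, so $|x|^{-(n-2+k)}a(x)^{-k}\to\infty$ and hence $g_k\equiv1=o\bigl(|x|^{-(n-2+k)}a(x)^{-k}\bigr)$ for $k\ge1$. For $k=0$ I must verify $g_0(x)=o\bigl(|x|^{-(n-2)}\log\tfrac5{|x|}\bigr)$: after taking the $p$-th root, the first term of $g_0$ is $|x|^{-\frac{2n}{\lambda_0-\sigma}}$ with $\frac{2n}{\lambda_0-\sigma}<n-2$, and the second is, by the transformed \eqref{eq4.8}, $o\bigl(|x|^{-(2n+\sigma(n-2))/\lambda_0}(\log\tfrac5{|x|})^{(\sigma+\frac{2n}{n-2})/\lambda_0}\bigr)$ with $\frac{2n+\sigma(n-2)}{\lambda_0}<n-2$ and log exponent $\frac{\sigma+\frac{2n}{n-2}}{\lambda_0}<1$; in both cases the exponent of $|x|$ is strictly below $n-2$, so the required $o$-estimate holds. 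Theorem~\ref{thm3.2}(ii) then gives $u(x)=o\bigl(|x|^{-(n-2)}\log\tfrac5{|x|}\bigr)$ as $x\to0$, and undoing the inversion ($|x|=1/|y|$) produces $v(y)=o(|y|^{n-2}\log5|y|)$ as $|y|\to\infty$, which is \eqref{eq4.9}.

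The step I expect to require the most care is the exponent bookkeeping in the last paragraph. In particular one should note that $1+\frac{2n}{\sigma(n-2)}$ is precisely the value $\lambda_0/\sigma$ at the borderline $\lambda_0=\sigma+\frac{2n}{n-2}$, so that any admissible $\lambda_0$ strictly above this borderline makes the condition needed on $g_0$ a genuine consequence of \eqref{eq4.8}; one must also track the logarithmic factors picked up through the $p$-th root and confirm they are harmless once the power of $|x|$ is strictly subcritical. Everything else---the continuity and lower bound $g_0\ge1$, the admissibility $\lambda_k<n/k$, and the behaviour of $a(x)$ near $0$---is routine.
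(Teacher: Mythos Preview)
Your proof is correct and follows the paper's approach: the $m$-Kelvin transform (which, since $2m=n$, is just composition with the inversion), Young's inequality to trade the factor $|x|^{-2n}$ for a higher power $\lambda_0>\sigma$, and then an appeal to Theorem~\ref{thm3.2}(ii). The only difference is that the paper picks the \emph{borderline} value $\lambda=\sigma+\frac{2n}{n-2}$, so that the identities $\frac{2n}{\lambda-\sigma}=n-2$ and $\frac{2n}{\lambda}+\frac{(n-2)\sigma}{\lambda}=n-2$ make the resulting $g_0$ land exactly on $o\bigl(|x|^{-(n-2)}\log\tfrac5{|x|}\bigr)$, whereas you take any $\lambda_0$ strictly above the borderline and get exponents strictly below $n-2$ (with harmless log factors), which of course still satisfies \eqref{eq3.24}; you already noted this distinction in your final paragraph.
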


\begin{proof}
Let
\begin{equation}\label{eq4.10}
\lambda = \frac{2n}{n-2} + \sigma\quad \text{and}\quad p = \frac\lambda\sigma = 1+ \frac{2n}{(n-2)\sigma}.
\end{equation}
Then
\begin{equation}\label{eq4.11}
\frac{2n}{\lambda-\sigma} = n-2\quad \text{and}\quad \frac{2n}\lambda + \frac{n-2}{p} = n-2.
\end{equation}

Let $u(x)$ be defined by \eqref{eq1.26}. Then by \eqref{eq4.7} and \eqref{eq1.27} we have
\[
0 \le -|x|^{2n} \Delta^m u(x) \le \left(u(x) + g\left(\frac{x}{|x|^2}\right)\right)^\sigma
\]
and thus letting $q$ be the conjugate H\"older exponent of $p$ and using \eqref{eq4.8}, \eqref{eq4.10} and \eqref{eq4.11} we get
\begin{align*}
0 &\le - \Delta^mu(x) \le \left(\left(\frac1{|x|}\right)^{\frac{2n}\sigma} u(x) + o\left(\left(\frac1{|x|}\right)^{\frac{2n}\sigma +(n-2)} \left(\log \frac5{|x|}\right)^p\right)\right)^\sigma\\
&\le \left(u(x)^p + \left(\frac1{|x|}\right)^{\frac{2n}\sigma q} + o\left(\left(\frac1{|x|}\right)^{\frac{2n}\sigma +(n-2)} \left(\log \frac5{|x|}\right)^p\right)\right)^\sigma\\
&\le \left(u(x) + \left(\frac1{|x|}\right)^{\frac{2n}{\sigma(p-1)}} + o\left(\left(\frac1{|x|}\right)^{\frac{2n}{p\sigma} + \frac{n-2}p} \left(\log \frac5{|x|}\right)\right)\right)^\lambda\\
&= \left(u(x) + o\left(\left(\frac1{|x|}\right)^{n-2} \left(\log \frac5{|x|}\right)\right)\right)^\lambda\quad \text{in}\quad B_2(0) \backslash\{0\}.
\end{align*}
Thus by Theorem \ref{thm3.2} we have
\[
u(x) = o\left(|x|^{-(n-2)} \log \frac5{|x|}\right)\quad\text{as}\quad
x\to 0
\]
and hence \eqref{eq4.9} holds.
\end{proof}

\begin{thm}\label{thm4.3}
Let $v(y)$ be a $C^{2m}$ nonnegative solution of
\begin{equation}\label{eq4.12}
0 \le -\Delta^m v \le e^{v^\lambda+g^\lambda}\quad \text{in}\quad {\bb R}^n\backslash B_{1/2}(0)
\end{equation}
where $m\ge 2$ and $n\ge 2$ are integers, $m$ is odd, $2m=n$, $0<\lambda<1$, and $g\colon {\bb R}^n \backslash B_{1/2}(0) \to [1,\infty)$ is a continuous function satisfying
\begin{align}\label{eq4.13}
g(y) &= o(|y|^{\frac{n-2}{1-\lambda}})\quad \text{as}\quad |y|\to\infty.\\
\intertext{Then}
\label{eq4.14}
v(y) &= o(|y|^{\frac{n-2}{1-\lambda}})\quad \text{as}\quad |y|\to\infty.
\end{align}
\end{thm}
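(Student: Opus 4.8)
The plan is to follow the strategy used in the proofs of Theorems~\ref{thm4.1} and \ref{thm4.2}: apply the $m$-Kelvin transform to move the singularity from infinity to the origin, invoke Theorem~\ref{thm3.3}, and transform back. Let $u(x)$ be defined by \eqref{eq1.26}. Since $2m=n$ we have $n-2m=0$, so \eqref{eq1.26} reduces to $v(y)=u(x)$ with $x=y/|y|^2$, and $u$ is a $C^{2m}$ nonnegative function on $B_2(0)\backslash\{0\}$. By \eqref{eq1.27} we have $\Delta^m v(y)=|x|^{2n}\Delta^m u(x)$, so \eqref{eq4.12} becomes
\[
0\le -\Delta^m u(x)\le |x|^{-2n}\,e^{u(x)^\lambda+g(x/|x|^2)^\lambda}\quad\text{in}\quad B_2(0)\backslash\{0\}.
\]

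The key point is to absorb the factor $|x|^{-2n}=e^{2n\log\frac1{|x|}}$ into the exponent. Setting
\[
\ovl{g}(x):=\left(g(x/|x|^2)^\lambda+2n\log\frac5{|x|}\right)^{1/\lambda},
\]
a continuous nonnegative function on $B_2(0)\backslash\{0\}$, the displayed inequality gives
\[
0\le -\Delta^m u\le e^{u^\lambda+\ovl{g}(x)^\lambda}\quad\text{in}\quad B_2(0)\backslash\{0\}.
\]
Next I would verify that $\ovl{g}$ satisfies hypothesis \eqref{eq3.53} of Theorem~\ref{thm3.3}, that is, $\ovl{g}(x)=o\bigl(|x|^{-(n-2)/(1-\lambda)}\bigr)$ as $x\to 0$, or equivalently $\ovl{g}(x)^\lambda=o\bigl(|x|^{-(n-2)\lambda/(1-\lambda)}\bigr)$. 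The first summand of $\ovl{g}(x)^\lambda$ is of this order because \eqref{eq4.13} gives $g(x/|x|^2)=o\bigl(|x|^{-(n-2)/(1-\lambda)}\bigr)$; the second summand is $O\bigl(\log\frac5{|x|}\bigr)$, which is again $o\bigl(|x|^{-(n-2)\lambda/(1-\lambda)}\bigr)$ since the exponent $(n-2)\lambda/(1-\lambda)$ is positive (here $n=2m\ge 6$) and a logarithm is dominated by every negative power of $|x|$.

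Granting this, Theorem~\ref{thm3.3} applies to $u$ and yields $u(x)=o\bigl(|x|^{-(n-2)/(1-\lambda)}\bigr)$ as $x\to 0$; transforming back via $x=y/|y|^2$, so that $|x|=1/|y|$ and $v(y)=u(x)$, gives $v(y)=o\bigl(|y|^{(n-2)/(1-\lambda)}\bigr)$ as $|y|\to\infty$, which is \eqref{eq4.14}. The main (and only mildly delicate) point is the bookkeeping in the middle paragraph: unlike the power-type nonlinearities of Theorems~\ref{thm4.1}--\ref{thm4.2}, where the analogous factor produced by the Kelvin identity is absorbed as a polynomial perturbation of $u$, here it must be absorbed additively inside the exponent, and one must confirm that after taking the $\lambda$-th root it contributes only a logarithmic term to $\ovl{g}$ and therefore does not spoil the decay rate demanded by Theorem~\ref{thm3.3}.
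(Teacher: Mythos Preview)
Your proposal is correct and follows essentially the same route as the paper: apply the $m$-Kelvin transform (noting $n-2m=0$ so $v(y)=u(x)$), absorb the Jacobian factor $|x|^{-2n}$ additively into the exponent, observe that both $g(x/|x|^2)^\lambda$ and the logarithmic term are $o(|x|^{-(n-2)\lambda/(1-\lambda)})$, and invoke Theorem~\ref{thm3.3}. The paper does exactly this, only more tersely---it writes the combined perturbation directly as $o\bigl((1/|x|)^{\lambda(n-2)/(1-\lambda)}\bigr)$ without naming an auxiliary $\ovl g$---so your explicit construction of $\ovl g$ and verification of \eqref{eq3.53} are just a slightly more detailed rendering of the same argument.
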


\begin{proof}
Let $u(x)$ be defined by \eqref{eq1.26}. Then by \eqref{eq4.12} and \eqref{eq1.27} we have
\begin{align*}
0 &\le -|x|^{2n} \Delta^mu(x) \le \exp\left(u(x)^\lambda + g\left(\frac{x}{|x|^2}\right)^\lambda\right)
\intertext{and thus by \eqref{eq4.13},}
0 &\le -\Delta^mu(x) \le \exp\left(u(x)^\lambda +
  o\left(\left(\frac1{|x|}\right)\right)^{\frac{\lambda(n-2)}{1-\lambda}}\right)
\quad\text{in}\quad B_2(0)\backslash\{0\}.
\end{align*}
Hence Theorem \ref{thm3.3} implies
\[
u(x) = o(|x|^{-\frac{(n-2)}{1-\lambda}})\quad \text{as}\quad x\to 0
\]
and so \eqref{eq4.14} holds.
\end{proof}

\begin{proof}[Proof of Theorem \ref{thm1.17}]
By using the $m$-Kelvin transform \eqref{eq1.26}, we see that to prove Theorem~\ref{thm1.17} it suffices to prove that there exists a $C^\infty$ positive solution $u(x)$ of
\begin{equation}\label{eq4.15}
0 \le -\Delta^m u \le |x|^\tau u^\lambda\quad \text{in}\quad {\bb R}^n\backslash\{0\},
\end{equation}
where
\[
\tau = \lambda(n-2m) - n - 2m
\]
such that
\begin{equation}\label{eq4.16}
u(x) \ne O(\varphi(|x|^{-1}) |x|^{-(b+n-2m)})\quad \text{as}\quad x\to 0.
\end{equation}
Define $\psi\colon (0,1)\to (0,1)$ by
\begin{equation}\label{eq4.17}
\psi(r) = \max\left\{\varphi(r^{-1})^p, r^{a\frac{n-\lambda(n-2m)}\lambda}\right\}
\end{equation}
where
\[
a := \frac{\lambda(m-1)+1}{n-\lambda(n-2m)} \quad \text{and}\quad p := \frac{n-\lambda(n-2m)}{2n}.
\]
By \eqref{eq1.30}, $a$ and $p$ are positive. Also 
\begin{equation}\label{eq4.18}
1 + 2a = \frac{\lambda(2m-2)-2m+2-\tau}{n-\lambda(n-2m)}\quad \text{and}\quad b = 2m-2+(n-2m)2a.
\end{equation}
Let $\{x_j\}^\infty_{j=1} \subset {\bb R}^n$ be a sequence satisfying \eqref{eq2.25} and \eqref{eq2.26}. Define $r_j>0$ by
\[
r^{n-\lambda(n-2m)}_j = \frac{2^{|\tau|}}{A^\lambda} \frac{|x_j|^{\lambda(2m-2)-2m+2-\tau}}{\psi(|x_j|)^\lambda}
\]
where $A=A(m,n)$ is as in Lemma~\ref{lem2.4}. Then $r_j$ satisfies
\eqref{eq2.36-2} and by \eqref{eq4.17} and \eqref{eq4.18},
\begin{align}\label{eq4.19}
r_j &= C(m,n,\lambda) \frac{|x_j|^{1+2a}}{\psi(|x_j|)^{\frac\lambda{n-\lambda(n-2m)}}}\\
&\le C(m,n,\lambda) |x_j|^{1+a}.\notag
\end{align}
Thus by taking a subsequence of $j,r_j$ will satisfy \eqref{eq2.27}. Let $u$ be as  in Lemma~\ref{lem2.4}. Then by Case~I of Remark~\ref{rem2.1}, $u$ is a $C^\infty$ positive solution of \eqref{eq4.15} and by \eqref{eq2.30}, \eqref{eq4.17}, \eqref{eq4.18}, and \eqref{eq4.19} we have  
\begin{align*}
u(x_j) &\ge \frac{C(m,n,\lambda)\psi(|x_j|)}{|x_j|^{2m-2}} \frac{\psi(|x_j|)^{\frac{\lambda(n-2m)}{n-\lambda(n-2m)}}}{|x_j|^{(1+2a)(n-2m)}}\\
&= \frac{C(m,n,\lambda) \psi(|x_j|)^{\frac{n}{n-\lambda(n-2m)}}}{|x_j|^{(n-2m) + (2m-2) + (n-2m)2a}}\\
&\ge C(m,n,\lambda) \frac{\varphi(|x_j|^{-1})^{1/2}}{|x_j|^{b+n-2m}}
\end{align*}
which implies \eqref{eq4.16}.
\end{proof}

\end{document}